\DeclareMathOperator{\Pow}{Pow}
\DeclareMathOperator{\argmax}{argmax}
\DeclareMathOperator{\epi}{epi}
\DeclareMathOperator{\supp}{Supp}
\newcommand{\N}{\mathbb N}
\newcommand{\R}{\mathbb R}
\newcommand{\HH}{\mathcal H}
\newtheorem{lem}{Lemma}[section]
\newtheorem{defn}[lem]{Definition}
\newtheorem{thm}[lem]{Theorem}
\newtheorem{rem}[lem]{Remark}
\newtheorem{exm}[lem]{Example}
\newtheorem{prop}[lem]{Proposition}
\newtheorem{assumption}{Assumption}
\numberwithin{equation}{section}
\title{On the sequential convergence of Lloyd's algorithms}
\author{
  Léo Portales\\
  IRIT, TSE, INP Toulouse\\
  \url{leo.portales@irit.fr}
  \and
  Elsa Cazelles\\
  CNRS, IRIT, Université de Toulouse\\
  \url{elsa.cazelles@irit.fr}
  \and
  Edouard Pauwels\\
  TSE, Université Toulouse Capitole\\
  \url{edouard2.pauwels@ut-capitole.fr}
}
\begin{document}
\maketitle

\begin{abstract}

	Lloyd's algorithm is an iterative method that solves the quantization problem, i.e. the approximation of a target probability measure by a discrete one, and is particularly used in digital applications.
    This algorithm can be interpreted as a gradient method on a certain quantization functional which is given by optimal transport.
	We study the sequential convergence (to a single accumulation point) for two variants of Lloyd's method: (i) \emph{optimal quantization} with an arbitrary discrete measure and (ii) \emph{uniform quantization} with a uniform discrete measure. For both cases, we prove sequential convergence of the iterates under an analiticity assumption on the density of the target measure. This includes for example analytic densities truncated to a compact semi-algebraic set. The argument leverages the log analytic nature of globally subanalytic integrals, the interpretation of Lloyd's method as a gradient method and the convergence analysis of gradient algorithms under Kurdyka-\L{ojasiewicz} assumptions. 
	As a by-product, we also obtain definability results for more general semi-discrete optimal transport losses such as transport distances with general costs, the max-sliced Wasserstein distance and the entropy regularized optimal transport loss.
\end{abstract}

\section{Introduction}
Quantization aims to approximate a target probability measure $\mu$ on $\R^d$ by a discrete, finitely supported measure. We consider two quantization settings induced by an optimization problem in Wasserstein distance. 
First, optimal quantization consists in approximating the target measure $\mu$ with a discrete probability measure supported on $N$ points $y_{1},\ldots,y_{N}\in \mathbb{R}^{d}$ with weights $\pi\in\Delta_N$, the $N-1$ dimensional unit simplex, by solving the following optimization problem:
\begin{equation}\label{opt_quant_0}
   \underset{y_1,\ldots,y_N\in \mathbb{R}^{d}}{\min} \ \underset{\pi \in \Delta_{N}}{\min} \qquad W_{2}^{2}\left(\mu,\sum_{i=1}^{N} \pi_{i} \delta_{y_{i}}\right).
\end{equation}
Optimal quantization in signal processing was introduced to address the central problem of converting a continuous-time signal into a digital one \cite{ref_signal_opt_quant_2}. It is in particularly used in compressed sensing \cite{ref_signal_opt_quant,ref_signal_opt_quant_3}, but also applies to a wide range of fields including dynamic systems \cite{larson1967optimum} or deep neural networks \cite{zhou2018adaptive}, to reduce computational costs. We refer the reader to \cite{Bookquantization} for a comprehensive introduction to quantization while the review \cite{pages2015introduction} by Pagès provides an overview of its applications in clustering, numerical integration, and finance. Second, uniform quantization consists in approximating $\mu$ with a discrete measure, corresponding to constraints on the optimal quantization weights: 
\begin{equation}
	\label{eq:unif_quantization_0}
	\underset{y_1,\ldots,y_N\in \mathbb{R}^{d}}{\min} \qquad W_{2}^{2}\left(\mu,\frac{1}{N}\sum_{i=1}^{N} \delta_{y_{i}}\right).
\end{equation}
This optimization problem, and more generally constrained quantization, is a very natural extension to \eqref{opt_quant_0} as one might aim to approximate a probability measure $\mu$ with a point cloud or a discrete measure with weights determined by a physical problem, as described in \cite{Bournehexagonalpatterns}. Uniform quantization also commonly sees applications in imaging for image stippling \cite{LebratGournay2D,balzer2009capacity}. In both problem, $W_2^2$ denotes the squared $2$-Wasserstein distance, the optimal transport distance for the squared Euclidean cost between probability measures (see e.g. \cite{refvillanitopics, villanioptoldnew, booksantambrogio}). The Wasserstein distance is well suited in this context since it allows to handle the semi-discrete nature of the quantization task. The main question addressed in this work is that of the asymptotic convergence of the well known Lloyd iterative algorithmic method for problems \eqref{opt_quant_0} and \eqref{eq:unif_quantization_0}. This algorithm may be interpreted as a fixed stepsize gradient descent algorithm. Actually, most\footnote{The unsupervised clustering algorithm based on Self Organizing Map (SOM \cite{cottrell2018self,asan2012introduction}) introduced by Kohonen \cite{kohonen1991self} is a quantization method that does not strictly solves \eqref{opt_quant_0}, although it coincides with Lloyd's in particular cases. SOM algorithm only has convergence guarantees in dimension $1$,  and weaker results are available in higher dimension \cite{cottrell2018self}.
} quantization methods are based on minimizing \eqref{opt_quant_0} or \eqref{eq:unif_quantization_0}, and the vast majority are iterative methods, including many variants of Lloyd's method, or gradient methods. Note that both problems are non-convex and iterative solvers typically find stationary points rather than global minima.In this work we focus on the simplest version of Lloyd's algorithm with fixed stepsize, which is the basis for a large family of quantization methods for which the sequential convergence was not established.

\begin{figure}
    \centering
    \subfloat{{\includegraphics[width=3.7cm]{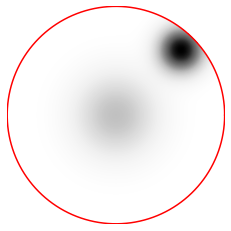}}}
    \subfloat{{\includegraphics[width=4cm]{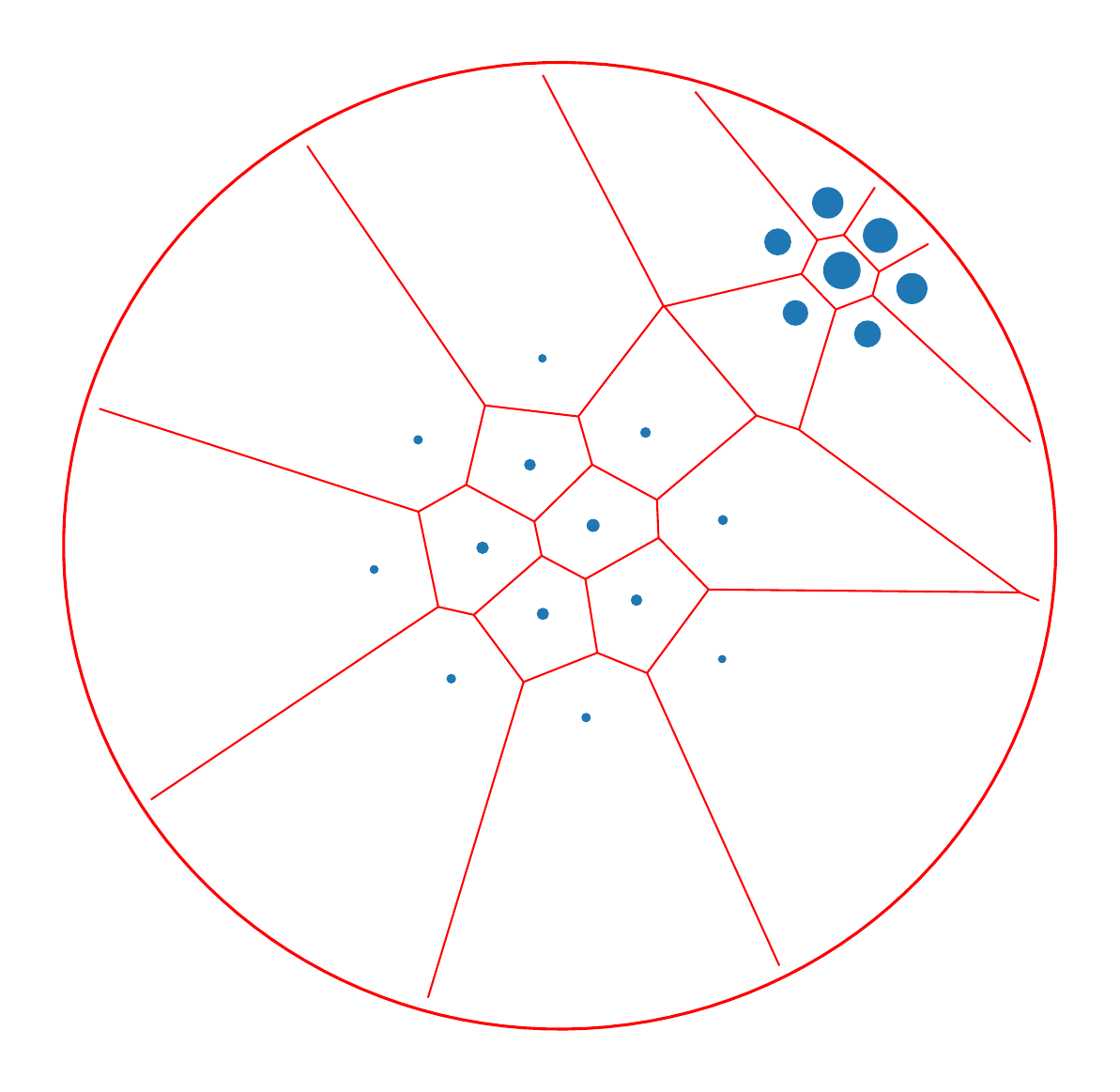} }}
    \subfloat{{\includegraphics[width=4cm]{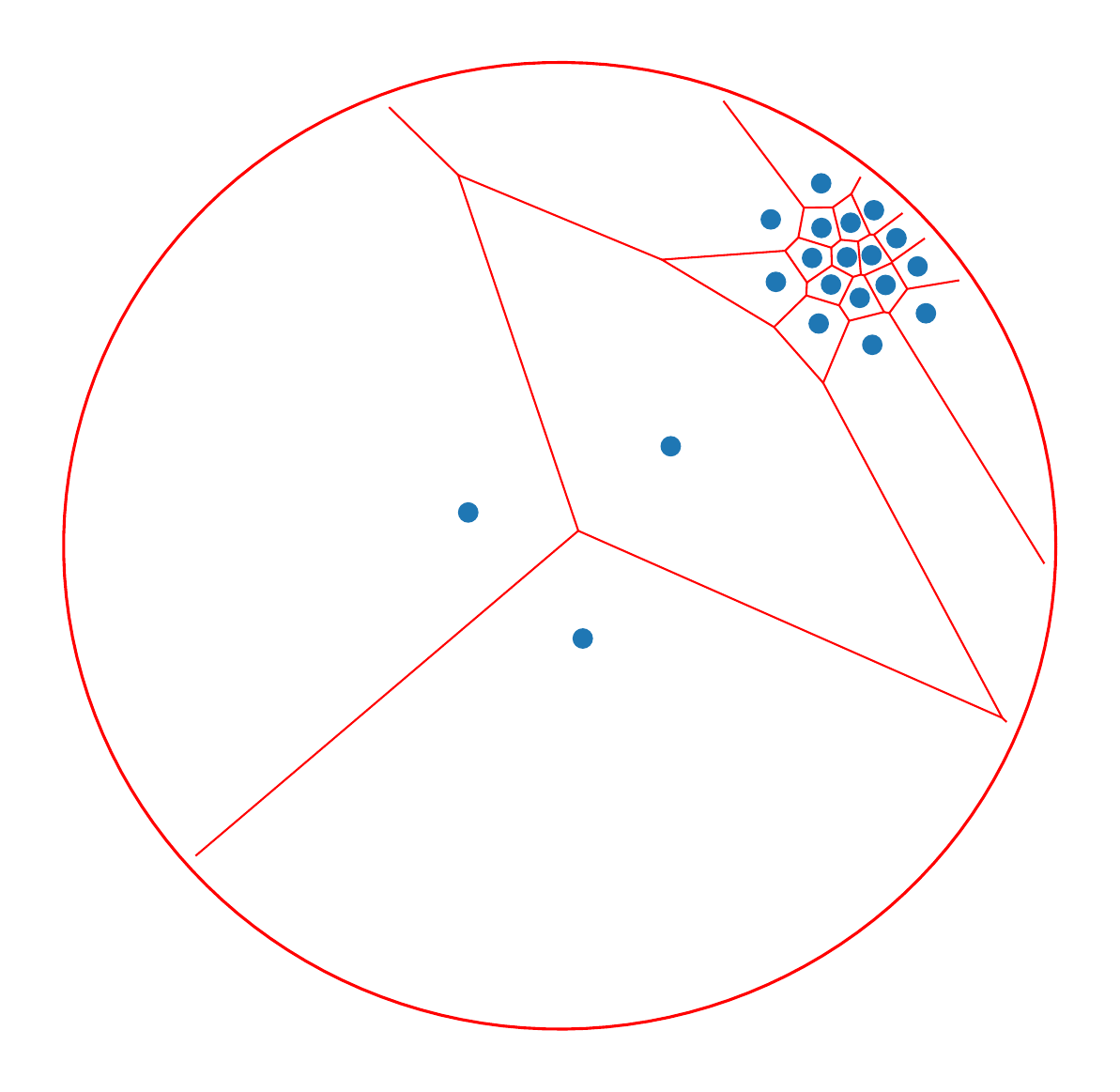} }}
    \caption{(Left) Target Gaussian mixture $\mu$ with two components truncated on a disk. (Middle) Optimal quantization of $\mu$ with $20$ points (blue) and their corresponding  Voronoi cells (in red) after $250$ iterations of Lloyd's algorithm. (Right) Uniform quantization of $\mu$ with $20$ points (blue) and the corresponding power cells (in red) after $5$ iterations of Lloyd's algorithm adjusted for uniform quantization. The diameter of a blue dot is proportional to its weight. The algorithms are randomly initialized and run using the PyMongeAmpere library\textsuperscript{\ref{note1}}.}
    \label{Nuage_Lloyd-mélange1}
\end{figure}

\paragraph{Lloyd-type algorithms.} In his early paper \cite{refLloyd}, Lloyd proposes an algorithm to solve the least squares quantization problem in the univariate setting (without mentioning optimal transport at that time). Lloyd defines a quantization scheme using intervals and their centroids and introduces an iterative algorithm consisting in alternating optimization steps on the intervals and on the centroids. The generalization of this method to the multivariate setting involves \emph{cendroidal Voronoi cells} \cite{Lloydoptcvonedimlogconcave2} which can be interpreted as optimal transport maps in the optimal quantization formulation in \eqref{opt_quant_0}. In this setting, Lloyd's method alternates between evaluation of Voronoi cells (given centroids), and update of centroids (given Voronoi cells). This algorithm corresponds to the K-means algorithm for solving \eqref{opt_quant_0} when the target measure $\mu$ is a point cloud (e.g. an empirical measure of i.i.d observations sampled from an unknown probability distribution).

Lloyd's method can be extended to the case of uniform quantization in \eqref{eq:unif_quantization_0}, for which the Voronoi cells encompass a volume constraint \cite{balzer2009capacity}. We refer to them as \emph{power cells}. These regions are also associated to optimal transport maps in the uniform quantization problem in \eqref{eq:unif_quantization_0} \cite{de2012blue}, and algorithms have been developed for computing these power cells \cite{refnewton}.
Both methods, for optimal or uniform quantization,can be interpreted as a fixed step-size gradient descent, and share the same overall ``alternating'' structure. We will refer to both of them as Lloyd's method. 
They are illustrated in Figure \ref{Nuage_Lloyd-mélange1}, generated using the PyMongeAmpere library\footnote{\label{note1}\href{https://github.com/mrgt/PyMongeAmpere}{https://github.com/mrgt/PyMongeAmpere}}. A precise description of Lloyd's method for optimal and uniform quantization is given in Section \ref{sec:mainResults} with Algorithm \ref{alg:LOYD_optimal_quant} and \ref{alg:LOYD_uniform_quant}. We consider the simplest form of the algorithm, and many variants exist in the literature, including backtracking line search  \cite{de2012blue,refhess} , and a stochastic version based on subsampling, also known as Competitive Learning Vector Quantization (CLVQ) \cite{bouton1997multidimensional}.  Second order Newton's type method where proposed in \cite{refhess} to solve \eqref{eq:unif_quantization_0}. Typical convergence guaranties for such methods are local and require invertibility of the Hessian, an assumption which is not guaranteed in general for \eqref{opt_quant_0} and \eqref{eq:unif_quantization_0}\footnote{For example, if the target measure $\mu$ has a rotation invariance, then the local minima of \eqref{opt_quant_0} or \eqref{eq:unif_quantization_0} are not isolated and the Hessian is not invertible there.}.

We address the question of sequential convergence of the iterates produced by Lloyd's method for solving \eqref{opt_quant_0} and \eqref{eq:unif_quantization_0}. More precisely, we consider the asymptotic stabilization of the support points $y_1,\ldots, y_N$ of the Dirac masses to a critical point of the quantization functional. This question was already considered in previous literature.
For optimal quantization, the first analyses were proposed for the univariate case \cite{Lloydoptimallocalcv,kieffer1982exponential} with special cases such as the strict log-concavity of the target measure density \cite{Lloydoptcvonedimlogconcave1,Lloydoptcvonedimlogconcave2}. In the multivariate setting, it is known that accumulation points of Lloyd's iteration are critical points of the loss in \eqref{opt_quant_0} \cite{LloydCVT,nondegeneracyLloyd,cvlloydanydimpages}, which also holds true for the stochastic variant, CLVQ, under a stepsize decrease conditions \cite{pages1998space}. In particular if such critical points are isolated, an assumption very difficult to check in practice, then the sequence converges. 
For uniform quantization, similar results exist. It was shown for example that accumulation points of Lloyd's method are critical points of the functional in \eqref{eq:unif_quantization_0} \cite{nonassymptotic}. Beyond existing results, a general sequential convergence guaranty for Lloyd's methods in the semi-discrete setting remains open. 

 We provide such guaranties under an explicit definability assumption on the density of the target measure $\mu$. Since the method is the basis for many quantization algorithms, this opens the possibility in future research to adapt our arguments for variants of the Lloyd's algorithm for solving  \eqref{opt_quant_0} and \eqref{eq:unif_quantization_0}.
Note that, we do not consider convergence toward the global minimizer of the quantization functional and limit ourselves to convergence of the sequence to critical points.

\paragraph{Convergence of gradient sequences and rigidity of semi-discrete optimal transport losses.} 
Both results rely on the interpretation of Lloyd's methods as gradient schemes for problems \eqref{opt_quant_0} and \eqref{eq:unif_quantization_0} \cite{Lloydoptcvonedimlogconcave2,nonassymptotic}. In a non-convex setting, the iterates of gradient sequences converge under \L{ojasiewicz} inequality, for example if the underlying loss function is analytic \cite{AbsMahAnd2005}. Kurdyka proposed a generalization of \L{ojasiewicz}'s property that holds true for all functions that are definable in an o-minimal structure \cite{KurdykaarticleominimpliqueKL}, which we refer to as the Kurdyka-\L{ojasiewicz} (KL) inequality. Definable functions represent a broad and versatile class of functions and covers many applications. For example this class is closed under composition, partial minimization / maximization, etc. Convergence of first order methods under KL assumptions have now become standard in non-convex optimization, see for example \cite{attouch2013convergence,bolte2014proximal} and references therein. This is the main path that we follow.

Applying Kurdyka's result to \eqref{opt_quant_0} and \eqref{eq:unif_quantization_0} requires to justify that these functions are definable in some o-minimal structure. This is far from direct as the description of these functionals involve integrals and o-minimal structures are not stable under integration in general. This is one of the great challenges of this field, several partial answers are known, one of them allowing to treat the definability of globally subanalytic integrals \cite{lion1998integration,comte2000nature,Cluckers2009StabilityUI} which have a log-analytic nature. This is the crucial step behind our convergence results, an observation already made in \cite{bolte2023subgradient} in a stochastic optimization context. We work under the assumption that the compactly supported target measure has a globally subanalytic density (see Assumption \ref{ass:targetMeasure}). This allows to justify the resulting log analytic nature of the losses in \eqref{opt_quant_0} and \eqref{eq:unif_quantization_0}. Convergence then follows using the well known connection between Lloyd's method and the gradient algorithm and its convergence analysis under KL assumptions.

The proposed analysis opens the question of definability of semi-discrete optimal transport losses beyond the $2$-Wasserstein distance in \eqref{opt_quant_0} and \eqref{eq:unif_quantization_0}. We justify the definable nature of several such losses involving general optimal transport costs, sliced Wasserstein distance \cite{rabin2012wasserstein,slicedradonbarycenter,bobkov2019one}, the max-sliced Wasserstein distance \cite{kolouri2019generalized,deshpande2019max}, and entropy regularized optimal transport \cite{cuturi2013sinkhorn}. These results are of independent interest, and may be relevant to study numerical applications of optimal transport.

\paragraph{Organization of the paper and notations.}
We start with a description of Lloyd's methods and state our main results in Section \ref{sec:mainResults}. The central convergence arguments based on KL inequality are described in Section \ref{sec:convergence}, and the definability of the underlying losses is described in Section \ref{section_rigidity_of_semi_discrete_losses}. Extension to broader optimal transport losses is given in Section \ref{definability losses}.

Regarding technical elements discussed in the introduction, we recall the definition of KL inequality and the main result of Kurdyka in Section \ref{section_KL}, while the necessary background on o-minimal structures and globally subanalytic sets is given in Section \ref{section_rigidity_of_semi_discrete_losses}. Furthermore, the precise definition of the $W_2$ distance in the semi-discrete setting of \eqref{opt_quant_0} and \eqref{eq:unif_quantization_0} is postponed to Section \ref{extension_general_cost}.

The Euclidean norm and the dot product in $\R^d$ are denoted $\|\cdot\|$ and $\langle\cdot , \cdot\rangle$. We define respectively the open and closed $d$-dimensional Euclidean ball centered in $x$ with radius $r$ as $B_{d}(x,r)$ and $\overline{B_{d}(x,r)}$.

\section{Main results}
\label{sec:mainResults}

We use the notation $Y\in(\R^d)^N$ to denote $N$ points $y_1,\ldots, y_N$ in $\R^d$ which are candidate support points for quantization. This section contains our main results regarding the sequential convergence of Lloyd's algorithms for optimal and uniform quantization.

\subsection{Assumptions on the target measure}
\label{sec:pedagogyOMin}
We will consider the following assumptions on the target measure $\mu$ and its density probability function $f$.
\begin{assumption}
	\label{ass:targetMeasure} 
	$\mu$ is a probability measure on $\R^d$, with compact support, absolutely continuous with respect to Lebesgue measure, with globally subanalytic density $f$.
\end{assumption}
Strictly speaking, the density $f$ consists of an equivalence class of functions defined up to a Lebesgue null set, our assumption means that one element of the class is globally subanalytic, and we will refer to this term as the density $f$.
Absolute continuity and compacity ensure that Lloyd's iterations are well defined, see for example \cite[Assumption 3.1]{nondegeneracyLloyd}.
The global subanaliticity assumption is a rigidity assumption which will allow to invoke KL inequality for our convergence analysis. 
The precise definition of global subanaliticity is given in Section \ref{sec:oMin}. This is a very versatile notion for which we provide a few examples below.
\begin{exm}\label{exmp_GSA}
The following are globally subanalytic density functions:
\begin{itemize}
    \itemsep0em 
	\item The uniform density on a compact basic semi-algebraic set, of the form $S  = \{x,\, P_i(x) \geq 0, i =1,\ldots,m\} \subset \R^d$, for some $m \in \N$, where $P_i \colon \R^d \to \R$ are polynomials. 
	\item Densities of the form $x \mapsto f(x) \mathbbm{1}_{S}(x)$ where $S$ is a compact semi-algebraic set (as above) and $f\colon U \to \R $ is an analytic function on an open domain $U \supset S$ (Lemma \ref{prop_exmp_GSA}). These include truncated Gaussians for example. 
	\item Semi-algebraic densities including integrable rational functions and their restrictions to compact sets.
    \item Bounded subanalytic functions.
	\item Any finite mixture of the above density examples. This allows to consider nonsmooth and possibly discontinuous densities.
    \item For instance, any normalized image considered as a piecewise constant probability density on the plane is globally subanalytic.
\end{itemize}
\end{exm}

\begin{rem}
	Under Assumption \ref{ass:targetMeasure}, the density $f$ is strictly positive on a full measure open dense set in $\supp\,\mu$, the support of the measure $\mu$. Indeed, the set $\{x,\:f(x)>0\}$ is globally subanalytic and can be partitioned into a finite number of embedded submanifolds in $\R^d$ \cite[4.8]{driesmiller}. It is enough to keep those of dimension $d$ only since those of dimension at most $d-1$ have zero Lebesgue measure. Additionally, the submanifolds $(C_i)_{i\in I}$ of dimension $d$ are precisely open subsets of $\R^d$. Therefore we can consider the set $S :=\cup_{i\in I}C_i$ which is is non-empty, globally subanalytic and open. We have $\supp \mu = \mathrm{cl}_{\R^d}(S)$, since the support is the smallest closed set of nonzero measure, and $\dim(\supp\mu \backslash S) < d$, using \cite[4.7]{driesmiller}, which is the claimed statement.
	\label{rem:densityPositive}
\end{rem}

\subsection{Optimal Quantization}
\label{section_optimal_quantization}
In addition to the main hypothesis above, the target measure is assumed to have convex support throughout this section.
\begin{assumption}
	$\mu$ is a probability measure as in Assumption \ref{ass:targetMeasure}, with convex support.
	\label{ass:convexSupport}
\end{assumption}
Partial minimization of the objective function in \eqref{opt_quant_0} with respect to the weights $\pi$ leads to the \emph{optimal quantization functional} $G_N \colon (\R^d)^N \to \R$, see e.g. \cite{Bookquantization},
\begin{equation}
    \label{eq:optimal_quant}
	G_N(Y) \quad = \quad  \underset{\pi \in \Delta_{N}}{\min} \quad \frac{1}{2}W_{2}^{2}\left(\mu,\sum_{i=1}^N \pi_i\delta_{y_i}\right)\quad=\quad \frac{1}{2} \int \min_{i=1,\ldots,N} \|x - y_i\|^2 f(x)dx,
\end{equation}
where we recall that $f$ stands for the density of $\mu$. Problem \eqref{opt_quant_0} therefore amounts to minimize the objective $G_N$. 
This functional is tightly connected to the so-called Voronoi cells $(V_i(Y))_{i=1,\ldots,N}$ associated to the support points $Y = (y_i)_{i=1}^N \in (\mathbb{R}^{d})^N$ \cite[Proposition 3.1]{Lloydoptcvonedimlogconcave2}, and given for $i=1,\ldots,N$ by 
\begin{equation}\label{def:Voronoi_cells}
    V_{i}(Y)=\left\{x\in \mathbb{R}^{d}\:|\: \forall j=1,\ldots,N \: :\:\|x-y_{i}\|^{2} < \|x-y_{j}\|^{2}\right\}.
\end{equation}
The Voronoi cells in \eqref{def:Voronoi_cells} are open and disjoint, they are all non-empty as long as $Y$ does not belong to the generalized diagonal defined by
\begin{align}
	\label{eq:generalizedDiagonal}
	D_{N}&=\{ Y\in (\mathbb{R}^{d})^{N}\: | \: \exists i \ne j: y_{i} = y_{j}\}.
\end{align}
Therefore, for any $Y \not \in D_N$, the optimal quantization functional in \eqref{eq:optimal_quant} takes the form
\begin{equation}
    \label{eq:optimal_quant2}
    G_N(Y) \quad=\quad \frac{1}{2}\sum_{i=1}^{N}\int_{V_{i}(Y)}\|x-y_{i}\|^{2}f(x)dx.
\end{equation}
Note that a local minimizer to \eqref{opt_quant_0} on $(\supp\mu)^N$ does not lie in $D_N$ \cite[Proposition 3.5]{Lloydoptcvonedimlogconcave2}.
Lloyd's algorithm for computing optimal quantizers is then defined by fixed point iterations of the Voronoi barycentric mapping $T_N \colon (\supp\mu)^N \setminus D_N \to (\supp\mu)^N \setminus D_N$ defined by
\begin{align}\label{Lloyd_map}
    T_N(Y) = \left( \frac{\int_{V_{i}(Y)}xd\mu(x)}{\mu(V_{i}(Y))} \right)_{i=1,\ldots,N}, 
\end{align}
Then the map $T_N$ is indeed well defined on $(\supp\mu)^N \setminus D_N$ since, for distinct centroids in $(\supp \mu)^N$, the Voronoi cells in \eqref{def:Voronoi_cells} are non-empty and have non-zero $\mu$ measure. Furthermore $T_N$ takes values in $ (\supp \mu)^N \setminus D_N$ as the centroids remain distinct since the Voronoi cells are disjoint, and they belong to the support of $\mu$ since it is convex by Assumption \ref{ass:convexSupport}.
This is summarized in Algorithm \ref{alg:LOYD_optimal_quant} and we prove in Section \ref{sec:optimal_quantization} the following theorem regarding convergence of the iterates.

\begin{thm}\label{th:main_Optimal_Quant}
	Let $\mu$ be as in Assumption \ref{ass:convexSupport}, then the iterates of Lloyd's algorithm for optimal quantization in Algorithm \ref{alg:LOYD_optimal_quant} converge to a critical point of $G_N$ in \eqref{eq:optimal_quant}.
\end{thm}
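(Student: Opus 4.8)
The plan is to realize Lloyd's map $T_N$ as a descent method for the functional $G_N$ of \eqref{eq:optimal_quant} and to invoke the convergence theory of descent sequences under a Kurdyka-Lojasiewicz (KL) inequality recalled in Section \ref{section_KL}. The argument rests on two essentially independent pillars. The first is that $G_N$ satisfies a KL inequality on the open set $(\R^d)^N \setminus D_N$, which is where the rigidity/definability analysis of Section \ref{section_rigidity_of_semi_discrete_losses} comes in. The second is that the iterates $Y^{k+1} = T_N(Y^k)$ obey the abstract sufficient-decrease and relative-error conditions of the KL convergence theorem while remaining in a fixed compact subset of $(\R^d)^N \setminus D_N$. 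Granting both, a bounded KL descent sequence has finite length, hence is Cauchy and converges to a single limit $Y^\ast$; the first-order optimality condition then identifies $Y^\ast$ as a critical point of $G_N$.

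For the KL property I would establish that $G_N$ is definable in an o-minimal structure. The integrand $(x,Y) \mapsto \min_i \|x - y_i\|^2 f(x)$ is globally subanalytic, since each $\|x-y_i\|^2$ is polynomial, a finite minimum of globally subanalytic functions is globally subanalytic, $f$ is globally subanalytic by Assumption \ref{ass:targetMeasure}, and finite products are preserved. Consequently the parametric integral $Y \mapsto \int \min_i \|x-y_i\|^2 f(x)\,dx$ is log-analytic, hence definable in $\mathbb{R}_{\mathrm{an},\exp}$, by the integration theorems for globally subanalytic functions recalled in Section \ref{section_rigidity_of_semi_discrete_losses}. Being definable and, as the gradient computation below shows, $C^1$ on the open definable set $(\R^d)^N \setminus D_N$, the function $G_N$ satisfies the KL inequality there by Kurdyka's theorem.

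I would then make the descent structure explicit. An envelope argument (boundary contributions cancel by continuity of $x \mapsto \min_i \|x-y_i\|^2$) gives $\nabla_{y_i} G_N(Y) = \int_{V_i(Y)} (y_i - x) f(x)\,dx = \mu(V_i(Y))\,(y_i - c_i(Y))$, where $c_i(Y)$ is the centroid of \eqref{Lloyd_map}; thus $T_N(Y)_i = y_i - \mu(V_i(Y))^{-1}\nabla_{y_i} G_N(Y)$, and the fixed points of $T_N$ are exactly the critical points of $G_N$. Two inequalities drive the convergence. First, the parallel axis (Huygens--Steiner) identity shows that moving the points onto the centroids of the fixed cells $V_i(Y^k)$ lowers the energy by $\tfrac12 \sum_i \mu(V_i(Y^k)) \|y_i^k - y_i^{k+1}\|^2$, while recomputing the Voronoi partition at $Y^{k+1}$ only lowers it further, so $G_N(Y^k) - G_N(Y^{k+1}) \ge \tfrac12 m_k \|Y^k - Y^{k+1}\|^2$ with $m_k = \min_i \mu(V_i(Y^k))$. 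Second, since $y_i^{k+1} = c_i(Y^k)$ and $\mu(V_i(Y^k)) \le 1$, we get $\|\nabla_{y_i} G_N(Y^k)\| = \mu(V_i(Y^k))\|y_i^k - y_i^{k+1}\| \le \|y_i^k - y_i^{k+1}\|$, hence $\|\nabla G_N(Y^k)\| \le \|Y^k - Y^{k+1}\|$. Combining the two yields the strong-descent/angle condition $G_N(Y^k) - G_N(Y^{k+1}) \ge \tfrac12 m_k \|\nabla G_N(Y^k)\|\,\|Y^k - Y^{k+1}\|$ required by the KL convergence analysis \cite{AbsMahAnd2005,attouch2013convergence,bolte2014proximal}.

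The step I expect to be the main obstacle is non-degeneracy: the descent constant is useful only if $\inf_k m_k > 0$, that is, if the cell masses stay bounded below and no two points collide along the orbit. Boundedness itself is automatic, since $\supp \mu$ is convex and compact (Assumption \ref{ass:convexSupport}), so every centroid lies in $\supp \mu$ and all iterates remain in the compact set $(\supp \mu)^N$. Ruling out $Y^k \to D_N$ is the delicate point; I would control it using the monotone decrease of $G_N$ together with the non-degeneracy results for Lloyd's iteration cited in the introduction \cite{nondegeneracyLloyd}, which, under absolute continuity and positivity of $f$ on $\supp \mu$ (Remark \ref{rem:densityPositive}), keep the Voronoi cells non-degenerate. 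Once the iterates are confined to a compact subset of $(\R^d)^N \setminus D_N$, the constants above are uniform, the hypotheses of the KL convergence theorem hold, and the bounded sequence converges to a critical point of $G_N$, as claimed.
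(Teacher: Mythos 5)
Your proposal is correct and follows essentially the same route as the paper: definability of $G_N$ in $\mathbb{R}_{\mathrm{an},\exp}$ via the integration theorem for globally subanalytic integrands plus Kurdyka's theorem, the Huygens--Steiner identity combined with re-optimization of the Voronoi partition to get the sufficient decrease, the bound $\mu(V_i)\le 1$ to convert it into the strong descent condition $G_N(Y^k)-G_N(Y^{k+1})\ge \tfrac{\ell}{2}\|\nabla G_N(Y^k)\|\,\|Y^{k+1}-Y^k\|$, and the non-degeneracy result of \cite{nondegeneracyLloyd} to keep the iterates in a compact subset of $(\R^d)^N\setminus D_N$ before invoking the KL convergence theorem of \cite{AbsMahAnd2005}. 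No gaps beyond those the paper itself delegates to the cited references.
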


\begin{rem}
Albeit similar to the K-means algorithm, our proof techniques for the convergence of the iterates of Lloyd's algorithm are not applicable to the K-means functional because of its non-differentiability. More precisely, in our semi-discrete setting \eqref{eq:optimal_quant2} corresponding to the optimal quantization of a continuous measure $\mu$, the functional $G_N$ writes as a sum of integrals over power cells. Although the cells evolve with the points cloud $Y$, we may use a modified form of the theorem of integration under the integral sign (or Reynold's transport theorem) to prove differentiability. However, for the K-means functional, that is $G_N$ in \eqref{eq:optimal_quant} when the target measure $\mu$ is discretely supported, one needs to consider subtle non-differentiability issues. General considerations on the convergence of the K-means algorithm are given in \cite{bottou1994convergence}.
\end{rem}

\begin{algorithm}[t]
  \caption{Lloyd's algorithm for optimal quantization}
	\label{alg:LOYD_optimal_quant}
	\begin{algorithmic}
	\STATE \textbf{Input:} $\mu$ absolutely continuous probability measure on $\R^d$ with convex compact support, $Y_0  \in (\supp\mu)^N \setminus D_N$ as in \eqref{eq:generalizedDiagonal}.
    \FOR{$n \in \N$}
        \STATE $Y_{n+1} = T_N(Y_n)$ where $T_N$ is the Voronoi barycentric mapping in \eqref{Lloyd_map}.
    \ENDFOR
 	\end{algorithmic}
\end{algorithm}

\subsection{Uniform Quantization}\label{section_Uniform_Quantization}

The objective in \eqref{eq:unif_quantization_0} is called the \emph{uniform quantization functional} $F_N \colon  (\mathbb{R}^{d})^{N} \to \R$, and is defined as follows
\begin{equation}\label{eq:unif_quantization}
	F_N(Y) \quad = \quad  \frac{1}{2}W_{2}^{2}\left(\mu,\frac{1}{N}\sum_{i=1}^{N} \delta_{y_{i}}\right).
\end{equation}
The uniform quantization problem consists in finding minimizers of $F_N$. Similar to optimal quantization, the functional can be explicitly described by power cells (also called Laguerre cells \cite{nonassymptotic}). These generalize Voronoi cells and characterize the optimal transport plan in $W_2$ between the target measure and the discrete measure (see also  \eqref{eq:generalCostOptTransport}). 
If $Y$ is not in the generalized diagonal $D_{N}$, then $F_N$ in \eqref{eq:unif_quantization} can be written with the Kantorovich dual formulation introduced as follows for $w\in\R^N$ (see for example \cite[Theorem 5.9]{villanioptoldnew} and \cite[Section 2]{nonassymptotic} for details on the uniform quantization setting)
\begin{equation}
	\label{Objective_G}
	\begin{split}
		\Phi(Y,w)&:=\frac{1}{2}\left\{\int_{\mathbb{R}^{d}}{\left(\min_{i=1,\ldots,N}\|x-y_{i} \|^{2}-w_{i}\right)f(x)dx}+\frac{1}{N}\sum_{i=1}^{N}{w_{i}}\right\}\\
		&=\frac{1}{2}\left\{\sum_{i=1}^{N}\int_{\Pow_{i}(Y,w)}\left(\|x-y_{i} \|^{2}-w_{i}\right)f(x)dx+\frac{1}{N}\sum_{i=1}^{N}{w_{i}}\right\}
	\end{split}
\end{equation}
where the power cells $\Pow_{i}(Y,w)$ are defined for $i=1,\ldots,N$ as
\begin{equation}
	\label{power_cells}
	\Pow_{i}(Y,w)=\{ x\in \mathbb{R}^{d}\: |\: \forall j=1,\ldots,N \: :\: \|x-y_{i}\|^{2}-w_{i} \leq \|x-y_{j}\|^{2}-w_{j} \}.
\end{equation}
Kantorovich's duality states that for any $Y \not \in D_N$, the uniform quantization functional in \eqref{eq:unif_quantization} is given by
\begin{equation}
	\label{Objective_function}
	F_{N}(Y) = \max_{w\in \mathbb{R}^{N}}\ \Phi(Y,w).
\end{equation}
It can be shown (see Remark \ref{rem_unicity}) that the maximum in \eqref{Objective_function} is attained and unique up to the addition of a constant. Therefore, it defines a unique set of power cells \eqref{power_cells}, and additionally the $\mu$ measure of each power cell is $\frac{1}{N}$.

We introduce $B_N \colon (\R^d)^N \setminus D_N \to  (\R^d)^N \setminus D_N$ the Laguerre barycentric application defined by
\begin{equation}
	\label{def:B_N}
	B_{N}(Y)=\left(N\int_{\Pow_{i}(Y,\phi(Y))}xd\mu(x)\right)_{i=1,\ldots,N}
\end{equation}
where $\phi(Y)$ denotes an element of ${\argmax}_{w\in \mathbb{R}^{N}}\:\Phi(Y,w)$ in \eqref{Objective_G}. Note that the Laguerre cells are disjoint and non-empty outside of the generalized diagonal $D_N$ so that $B_N$ indeed has values in $(\R^d)^N \setminus D_N$.
Lloyd's algorithm consists in fixed point iterations of the map $B_N$ as summarized in Algorithm \ref{alg:LOYD_uniform_quant}. 
Our main result regarding this algorithm is the following, whose proof is postponed to Section \ref{sec:uniform_quant}.

\begin{thm}\label{th:main}
	Let $\mu$ be as in Assumption \ref{ass:targetMeasure}, then the iterates of Lloyd's algorithm for uniform quantization in Algorithm \ref{alg:LOYD_uniform_quant} converge to a critical point of $F_N$ in \eqref{eq:unif_quantization}.
\end{thm}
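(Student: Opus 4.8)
The plan is to realize Lloyd's map $B_N$ as a gradient step for $F_N$ and then invoke the convergence theory for descent sequences of Kurdyka–Łojasiewicz functions developed in Section \ref{sec:convergence}.

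First I would establish that $F_N$ is continuously differentiable on $(\R^d)^N\setminus D_N$ and compute its gradient. For $Y\notin D_N$ the inner maximization in \eqref{Objective_function} admits a maximizer $\phi(Y)$ that is unique up to an additive constant (Remark \ref{rem_unicity}), so the power cells $\Pow_i(Y,\phi(Y))$ are uniquely determined and Danskin's (envelope) theorem applies, giving $\nabla_Y F_N(Y)=\nabla_Y\Phi(Y,\phi(Y))$. Differentiating \eqref{Objective_G} in $y_i$ over the fixed optimal cell yields $\nabla_{y_i}F_N(Y)=\int_{\Pow_i(Y,\phi(Y))}(y_i-x)f(x)\,dx$, and since $\mu(\Pow_i(Y,\phi(Y)))=\tfrac1N$ this simplifies to $\nabla_{y_i}F_N(Y)=\tfrac1N\bigl(y_i-B_N(Y)_i\bigr)$. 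Hence $B_N(Y)=Y-N\,\nabla F_N(Y)$, i.e. the iteration $Y_{n+1}=B_N(Y_n)$ is exactly a gradient descent step on $F_N$ with step size $N$.

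Next I would prove the two descent estimates required by the abstract convergence result. Using the primal (transport) formulation I would write $F_N(Y)=\min_{P}E(Y,P)$, where the minimum runs over measurable partitions $P=(P_i)_i$ with $\mu(P_i)=\tfrac1N$ and $E(Y,P)=\tfrac12\sum_i\int_{P_i}\|x-y_i\|^2 f(x)\,dx$, the power cells $P_n=(\Pow_i(Y_n,\phi(Y_n)))_i$ being optimal at $Y_n$. For fixed cells $P_n$ the map $Y\mapsto E(Y,P_n)$ is a sum of quadratics with Hessian $\tfrac1N I$ per block, minimized exactly at $Y_{n+1}=B_N(Y_n)$, so that
\[
F_N(Y_n)-F_N(Y_{n+1})\ \ge\ E(Y_n,P_n)-E(Y_{n+1},P_n)\ =\ \frac{1}{2N}\,\|Y_{n+1}-Y_n\|^2 .
\]
Combined with $Y_{n+1}-Y_n=-N\nabla F_N(Y_n)$, this delivers the sufficient-decrease and gradient-proportionality conditions of Absil–Mahony–Andrews \cite{AbsMahAnd2005}. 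Moreover each $B_N(Y_n)_i$ is a $\mu$-barycenter of a subset of $\supp\mu$, so the iterates lie in the compact convex hull of $\supp\mu$; since every power cell carries mass $\tfrac1N>0$ they remain in $(\R^d)^N\setminus D_N$, and I would verify that they stay in a compact subset on which $F_N$ is $C^1$.

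Finally I would invoke the definability of $F_N$: by the rigidity results of Section \ref{section_rigidity_of_semi_discrete_losses}, the parametric integral $\Phi$ is log-analytic hence definable, and $F_N=\max_w\Phi(\cdot,w)$ is definable as a partial maximization, so Kurdyka's theorem furnishes the KL inequality at every point. Applying the convergence theorem for KL descent sequences then gives $\sum_n\|Y_{n+1}-Y_n\|<\infty$, whence the bounded sequence $(Y_n)$ converges to a single limit $Y^\star$; since $\|\nabla F_N(Y_n)\|=\tfrac1N\|Y_{n+1}-Y_n\|\to 0$ and $\nabla F_N$ is continuous, $Y^\star$ is a critical point of $F_N$. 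The main obstacle is the definability step, since o-minimal structures are not stable under integration and one must exploit the log-analytic nature of globally subanalytic parametric integrals; the secondary technical point is ensuring the iterates do not approach the diagonal $D_N$, so that the $C^1$ structure and the gradient identity remain valid up to the limit.
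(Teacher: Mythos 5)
Your proposal is correct and follows essentially the same route as the paper: the gradient identity $\nabla F_N(Y)=\frac{1}{N}(Y-B_N(Y))$ (Proposition \ref{Gradient}), the descent estimate $F_N(Y_n)-F_N(Y_{n+1})\geq \frac{1}{2N}\|Y_{n+1}-Y_n\|^2$ (which the paper imports from the proof of Proposition 2 in \cite{nonassymptotic} and you re-derive via the fixed-partition argument), definability of $F_N$ yielding the KL property, and the convergence theorem of \cite{AbsMahAnd2005} together with the observation that the iterates stay in a compact subset of $(\R^d)^N\setminus D_N$. The only difference is that you sketch proofs of the two ingredients the paper cites; the overall architecture is identical.
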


\begin{algorithm}[t]
  \caption{Lloyd's algorithm for uniform quantization}
	\label{alg:LOYD_uniform_quant}
	\begin{algorithmic}
		\STATE \textbf{Input:} $\mu$ absolutely continuous probability measure on $\R^d$ with compact support, \\
		$Y_0  \in (\R^d)^N \setminus D_N$ as in \eqref{eq:generalizedDiagonal}.
	    \FOR{$n \in \N$}
	        \STATE $Y_{n+1} = B_N(Y_n)$ where $B_N$ is the Laguerre barycentric mapping in \eqref{def:B_N}.
	    \ENDFOR
 	\end{algorithmic}
\end{algorithm}
Our result does not provide details regarding the nature of the critical points of $F_N$ found by Algorithm \ref{alg:LOYD_uniform_quant}. Numerical experiments suggest that random initializations $Y_0\in(\R^d)^N$ yield different functional values at convergence, implying that there is no general convergence to a global minimum. On the other hand, as Corollary 4 in \cite{nonassymptotic} shows, if the initial points are sufficiently spaced out, one step of Lloyd's algorithm is enough to obtain a low value of $F_N$, suggesting that the algorithm finds critical points with low objective values.

\begin{rem}
	Contrary to optimal quantization, the convexity of the support is not required here for the algorithm to be well defined, by construction of the power cells at optimality in \eqref{Objective_function}. Additionally, the identity in \eqref{Objective_function} only holds if $Y \not \in D_N$. It is nonetheless possible to extend it to equality cases, that is when there exists $i,j\in\{1,\ldots,N\}, i\neq j$, such that $y_i=y_j$, and this is done in Theorem \ref{th:KantorovichDiagonal}. 
	This remark does not have any influence on our algorithmic results since the iterates remain away from the diagonal, but it will be essential to justify that the function $F_N$ in \eqref{eq:unif_quantization} is definable on the whole space.
	\label{rem:unifQuantEverywhere}
\end{rem}

\subsection{Rigidity of quantization functionals and semi-discrete optimal transport losses}

The key in obtaining our convergence results is to interpret Lloyd's algorithms as gradient descent and rely on convergence analysis under Kurdyka-\L{ojasiewicz} assumptions \cite{AbsMahAnd2005,bolte2023subgradient}. A sufficient condition for applying this analysis is that the associated loss functions are definable in an o-minimal structure \cite{KurdykaarticleominimpliqueKL}.
While definability is stable under many operations, such as composition, inverse, and projection, it is not a general rule that infinite summations or integrals preserve definability. For the specific case of globally subanalytic integrands, the resulting partial integrals have a log-analytic nature \cite{lion1998integration,comte2000nature,Cluckers2009StabilityUI,kaiser2013integration}, so that such parameterized integrals are definable in the o-minimal structure  $\mathbb{R}_{\text{an,exp}}$ \cite{Dries1995OnTR}. This was also observed in the context of stochastic optimization in \cite{bolte2023subgradient}. We prove in Section  \ref{definability losses} that several loss functions based on semi-discrete optimal transport divergences are definable in this o-minimal structure under Assumption \ref{ass:targetMeasure}, including the optimal quantization functional in \eqref{eq:optimal_quant} and the uniform quantization functional in \eqref{eq:unif_quantization}. This definability result holds for optimal transport with general globally subanalytic cost functions (Section 6 in \cite{villanioptoldnew}), the max-sliced Wasserstein distance \cite{kolouri2019generalized} and the entropy regularized Wasserstein divergence \cite{Entropic}. These constitute technical extensions of our main sequential convergence analysis which are of independent interest.

\section{Convergence of the iterates of Lloyd's algorithms}
\label{sec:convergence}
This section provides the proof arguments for Theorems \ref{th:main_Optimal_Quant} and \ref{th:main}. The main device is the interpretation of Lloyd's algorithms as gradient methods combined with Kurdyka-\L{ojasiewicz} (KL) inequality. This allows to invoke sequential convergence results for gradient methods on functions verifying KL inequality \cite{AbsMahAnd2005,attouch2013convergence}. The KL inequality is verified in our cases by Kurdyka's sufficient condition, which is the definability of the quantization loss functions in \eqref{eq:optimal_quant} and \eqref{Objective_function}. The proof of these definability results is independent of the convergence analysis and is given in Section \ref{sec:definability_quantization}.

\subsection{KL inequality}\label{section_KL}
\L{ojasiewicz}'s gradient inequality is of the form $\|f(x)-f(y)\|^{\theta} \leq C\|\nabla f(x)\| $ with $\theta \in ]0,1[$ for a fixed $y$, and was first identified for real analytic functions \cite{Lojasiewicz1963propriete}. Later on, Kurdyka \cite{KurdykaarticleominimpliqueKL} proposed a generalization to functions definable in an o-minimal structure now known as the Kurdyka-\L{ojasiewicz} (KL) inequality. The KL inequality usually takes the following form.
\begin{defn}[Definition 7 in \cite{proximalbolte2010defKL}]\label{def:KL_property}
    Let $V$ be an open set of $\mathbb{R}^{d}$ and $\bar{Y}$ be a point in $V$. We say that a $\mathcal{C}^{1}$ function $F: V \to \mathbb{R}$ has the KL property at $\bar{Y}$ if there exist $\eta\in \ ]0,+\infty[$, a neighborhood $U$ of $\bar{Y}$ in $V$ and a continuous positive concave function $\Psi:[0,\eta[ \ \to \mathbb{R}_{+}$  such that:
    \begin{enumerate}
        \itemsep0em 
        \item $\Psi(0)=0$.
        \item $\Psi\in \mathcal{C}^{1}(]0,\eta[)$.
        \item $\forall t \in \ ]0,\eta[,\: \Psi'(t)>0$.
        \item $\forall \ Y\in U\cap \{Y\in \mathbb{R}^{d}\:|\: F(\bar{Y})< F(Y) < F(\bar{Y})+\eta\}$ the KL inequality holds:
            \begin{equation*}
            \Psi'(F(Y)-F(\bar{Y}))\ \|\nabla F(Y)\|\geq 1.
            \end{equation*}
    \end{enumerate}
     If $F$ has the KL property at all $ \bar{Y} \in V$, we say that $F$ has the KL property on $V$.
\end{defn}

\begin{thm}[Theorem 1 in \cite{KurdykaarticleominimpliqueKL}]
\label{th:kurdyka}
Let $V$ be an open set on $\R^d$ and $f \colon V \to \R$ be $\mathcal{C}^1$ and definable in an o-minimal structure. Then $f$ satisfies the KL property, as described in Definition \ref{def:KL_property}.
\end{thm}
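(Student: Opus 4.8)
The statement to be proved is Kurdyka's theorem, so the plan is to reconstruct its proof. After translating so that $f(\bar Y)=0$ and restricting to a small ball $U=B_d(\bar Y,\rho)\subset V$, the goal reduces to producing a single concave function $\Psi$ with $\Psi(0)=0$, $\Psi\in\mathcal{C}^1(]0,\eta[)$ and $\Psi'>0$, such that $\Psi'(f(Y))\,\|\nabla f(Y)\|\ge 1$ on the slab $U\cap\{0<f<\eta\}$. The entire construction is driven by a single one-variable function. First I would record the definability ingredients. Since $f$ is $\mathcal{C}^1$ and definable, each partial derivative is definable (where a definable function is differentiable, its derivative is again definable, being defined by a limit of difference quotients expressible in the structure), hence $Y\mapsto\|\nabla f(Y)\|$ is definable. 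Consequently the function $\theta(t):=\inf\{\|\nabla f(Y)\|\,:\,Y\in U,\ f(Y)=t\}$ is definable as the infimum of a definable family, and by construction $\|\nabla f(Y)\|\ge\theta(f(Y))$ for every $Y\in U$. It therefore suffices to build $\Psi$ with $\Psi'(t)\,\theta(t)\ge 1$ on $]0,\eta[$, since then $\Psi'(f(Y))\,\|\nabla f(Y)\|\ge\Psi'(f(Y))\,\theta(f(Y))\ge 1$.

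Next I would guarantee that $\theta$ is positive near $0$. By the o-minimal Sard theorem the set of critical values of $f$ is finite, so there is $\eta>0$ with no critical value in $]0,\eta[$; after shrinking $U$, the gradient $\nabla f$ does not vanish on $U\cap\{0<f<\eta\}$, and a definable compactness argument upgrades this to $\theta(t)>0$ for every $t\in\,]0,\eta[$. The Monotonicity Theorem then lets me shrink $\eta$ so that $\theta$ is continuous and monotone on $]0,\eta[$; since $\|\nabla f\|$ vanishes at $\bar Y$, one expects $\theta(t)\to 0$ as $t\to 0$ with $\theta$ nondecreasing, so that $g:=1/\theta$ is a well-defined, positive, nonincreasing definable function on $]0,\eta[$.

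The heart of the argument, and the step I expect to be the main obstacle, is proving that $g=1/\theta$ is integrable at $0$. This is exactly what fails for a generic one-variable definable function (for instance $t^{-1}$ is definable but not integrable near $0$), so it cannot follow from soft o-minimality alone and constitutes the genuinely new content of Kurdyka's theorem, generalizing the analytic Łojasiewicz inequality. I would establish it geometrically: reparametrizing the gradient flow by the value $t=f$, the speed of a trajectory is $1/\|\nabla f\|$, so $\int_a^b g(t)\,dt$ dominates the length of gradient trajectories joining the corresponding level sets. Kurdyka's estimate shows these lengths remain uniformly bounded as $t\to 0$, using the finiteness of the number of connected components of definable sets, the curve selection lemma, and the monotonicity of $f$ along the valley (talweg) to preclude oscillation or spiralling of the trajectories.

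Granting integrability, I set $\Psi(t):=\int_0^t g(s)\,ds$. Then $\Psi(0)=0$, $\Psi$ is continuous on $[0,\eta[$ and $\mathcal{C}^1$ on $]0,\eta[$ with $\Psi'=g>0$, and $\Psi$ is concave because $g$ is nonincreasing; finally $\Psi'(t)\,\theta(t)=g(t)\,\theta(t)=1$, which yields the KL inequality on $U\cap\{0<f<\eta\}$ as explained above. Since $\bar Y\in V$ was arbitrary, $f$ enjoys the KL property at every point of $V$, hence on all of $V$.
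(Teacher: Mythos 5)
You should first note that the paper offers no proof of this statement at all: it is imported verbatim as Theorem 1 of Kurdyka's paper \cite{KurdykaarticleominimpliqueKL}, so there is nothing in the text to compare against and you are reconstructing a deep external result from scratch. Your skeleton is indeed the right one (and matches Kurdyka's): set $\theta(t)=\inf\{\|\nabla f(Y)\|: Y\in U,\ f(Y)=t\}$, which is definable and satisfies $\|\nabla f(Y)\|\ge\theta(f(Y))$, prove that $g=1/\theta$ is integrable at $0$, and take $\Psi(t)=\int_0^t g$. The preliminary steps (definability of $\nabla f$, definable Sard to get finitely many critical values, the Monotonicity Theorem to make $\theta$ continuous and monotone near $0$, the reduction to a bounded neighborhood) are all correctly identified.

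The genuine gap is in the one step you yourself flag as the heart of the matter: the integrability of $g=1/\theta$. Your argument runs the key inequality in the wrong direction. Along a gradient trajectory reparametrized by the level $t$, the speed is $1/\|\nabla f\|\le 1/\theta(t)=g(t)$, so $\int_a^b g(t)\,dt$ is an \emph{upper} bound for the trajectory length --- exactly as you write. But then ``these lengths remain uniformly bounded'' gives no information about $\int g$: a quantity dominated by $\int g$ being finite does not make $\int g$ finite. (In fact, the uniform bound on gradient trajectory lengths is the \emph{consequence} of the KL inequality, Kurdyka's Theorem 2, not an ingredient of its proof.) What is actually needed is a curve whose length bounds $\int g$ from \emph{above}: by definable choice one selects a definable curve $\gamma$ in the talweg, i.e.\ $f(\gamma(t))=t$ with $\|\nabla f(\gamma(t))\|\le 2\,\theta(t)$; then $1=\frac{d}{dt}f(\gamma(t))\le\|\nabla f(\gamma(t))\|\,\|\gamma'(t)\|$ gives $\|\gamma'(t)\|\ge g(t)/2$, hence $\int_0^\eta g\le 2\,\mathrm{length}(\gamma)$, and the finiteness of the length of a bounded definable curve (a standard o-minimal fact) closes the argument. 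You mention the talweg in passing, but the logical chain as written does not use it where it is indispensable, so the integrability claim is not established. Two smaller points: $\theta(t)>0$ for small $t>0$ needs more care than ``a definable compactness argument,'' since the level sets $U\cap f^{-1}(t)$ need not be compact and the infimum could a priori degenerate toward $\partial U$ (Kurdyka handles this via o-minimality of $\{t:\theta(t)=0\}$ and the same talweg selection); and the monotonicity direction of $\theta$ needs a short case distinction (if $\theta$ does not tend to $0$, the gradient is bounded below and a linear $\Psi$ suffices).
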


We postpone a precise definition and detailed discussion of definability of semi-discrete optimal transport losses to Section \ref{section_rigidity_of_semi_discrete_losses}. In particular, we will show that the objective functions $G_{N}$ and $F_{N}$ defined in \eqref{eq:optimal_quant} and \eqref{Objective_function} are definable in some o-minimal structure, under Assumption \ref{ass:targetMeasure}.

As a consequence of Theorem \ref{th:kurdyka}, in the following we may use the KL inequality for $G_{N}$ and $F_{N}$ to prove the convergence of Lloyd's iterates. Lloyd's algorithms can indeed be interpreted as gradient methods (see \eqref{eq:grad_GN} and \eqref{eq:grad_FN}) for which convergence analysis under KL assumptions has become standard \cite{AbsMahAnd2005,proximalbolte2010defKL,attouch2013convergence,bolte2014proximal}.

\subsection{Lloyd's sequence for optimal quantization}
\label{sec:optimal_quantization}
Lloyd's method for optimal quantization, given in Algorithm \ref{alg:LOYD_optimal_quant}, can be understood as a gradient descent algorithm on the optimal quantization objective function $G_N$ in \eqref{eq:optimal_quant}. Throughout this section, we interpret $Y\in (\mathbb{R}^{d})^{N}$ as a matrix in $\R^{N \times d}$.
The function $G_N$ is differentiable outside of the generalized diagonal, and its gradient is explicit, as stated in the following proposition.
\begin{prop}[Proposition 6.2 in \cite{Lloydoptcvonedimlogconcave2}]
	Let $\mu$ be as in Assumption \ref{ass:convexSupport} and $Y\in (\mathbb{R}^{d})^{N}\setminus D_{N}$. Then $G_{N}$ is differentiable in $Y$ with gradient
    \begin{equation}
        \nabla G_{N}(Y)=M(V(Y))\ (Y-T_N(Y)),
        \label{eq:grad_GN}
    \end{equation}
	where $M(V(Y)) \in \R^{N \times N}$ is the diagonal matrix with entries $(\mu(V_i(Y)))_{i=1,\ldots,N}$, with $(V_i(Y))_{i=1,\ldots,N}$ the Voronoi cells in \eqref{def:Voronoi_cells}, and $T_N$ is the Voronoi barycentric mapping defined in \eqref{Lloyd_map}.
	\label{prop:lloydOptimalGradient}
\end{prop}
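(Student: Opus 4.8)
The plan is to write $G_N = \tfrac12 h$ with $h(Y) = \int g(x,Y)\,f(x)\,dx$ and $g(x,Y) = \min_{i=1,\ldots,N}\|x-y_i\|^2$, and to obtain $\nabla h$ by differentiating under the integral sign; the only genuine issue is the non-smoothness of the pointwise minimum across Voronoi boundaries. First I would fix $Y \notin D_N$ and analyze the integrand pointwise in $x$. The set where the minimizing index fails to be unique is contained in the finite union of bisecting hyperplanes $\{x : \|x-y_i\|^2 = \|x-y_j\|^2\}$, $i \neq j$, which is Lebesgue-negligible and hence $\mu$-negligible by absolute continuity (Assumption \ref{ass:targetMeasure}). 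For every $x$ off this set, say $x \in V_i(Y)$, the strict inequalities defining $V_i$ in \eqref{def:Voronoi_cells} persist for all $Y'$ in a neighborhood of $Y$, so locally $g(x,Y') = \|x - y_i'\|^2$; hence $Y' \mapsto g(x,Y')$ is smooth near $Y$ with $\nabla_{y_j} g(x,Y) = 2(y_j - x)\,\mathbf{1}_{\{x \in V_j(Y)\}}$.

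Next I would justify the exchange of derivative and integral via a dominated convergence (envelope/Danskin-type) argument. Along any direction $V \in (\R^d)^N$ the difference quotients $[g(x,Y+tV)-g(x,Y)]/t$ converge pointwise almost everywhere to $\langle \nabla_Y g(x,Y), V\rangle$ as $t \to 0$, and, because $\supp\mu$ is compact and $Y$ ranges over a bounded neighborhood, they are uniformly bounded by $C\|V\|$ where $C$ is the local Lipschitz constant of $Y \mapsto g(x,Y)$ on the relevant compact set (controlled by $2(\rho + R)$ with $\|x\|\le R$ on $\supp\mu$ and $\|y_i\|\le\rho$). Since $C\|V\|\,f$ is integrable, dominated convergence gives $h'(Y;V) = \int \langle \nabla_Y g(x,Y), V\rangle f(x)\,dx$, which is linear in $V$, so $h$ is Gateaux differentiable with $\nabla_{y_i} h(Y) = \int_{V_i(Y)} 2(y_i - x)\, f(x)\,dx$. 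Substituting $\int_{V_i(Y)} x\, f = \mu(V_i(Y))\,T_N(Y)_i$, valid wherever $\mu(V_i(Y)) > 0$ (which holds on the domain $\supp\mu \setminus D_N$ of $T_N$, as discussed after \eqref{Lloyd_map} and in Remark \ref{rem:densityPositive}), yields $\nabla_{y_i} h(Y) = 2\mu(V_i(Y))\,(y_i - T_N(Y)_i)$, i.e. $\nabla G_N(Y) = M(V(Y))\,(Y - T_N(Y))$ once $Y$ and $T_N(Y)$ are read as rows of $\R^{N\times d}$ and $M(V(Y))$ acts by left row-scaling.

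To upgrade Gateaux to genuine (Fréchet) differentiability I would verify that $Y \mapsto \nabla h(Y)$ is continuous on $(\R^d)^N \setminus D_N$: the masses $\mu(V_i(Y))$ and the barycenters $T_N(Y)$ depend continuously on $Y$ there, since the cells vary continuously and their boundaries remain $\mu$-negligible, so $h \in \mathcal C^1$ and the computed expression is the true gradient.

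The main obstacle is the second step, making the differentiation under the integral rigorous despite the kink of $g(x,\cdot)$ on the measure-zero boundary set; its validity rests precisely on the absolute continuity of $\mu$ (to kill the boundary) and on compact support (to obtain an integrable dominating function). I would stress that global subanaliticity of $f$ plays no role here, and that the convexity of $\supp\mu$ from Assumption \ref{ass:convexSupport} is likewise not needed for the differentiation itself but only to guarantee that $T_N(Y)$ again lies in $\supp\mu \setminus D_N$, keeping the Lloyd iteration well posed.
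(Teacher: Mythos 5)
Your argument is correct. Note first that the paper does not prove this proposition at all: it imports it verbatim as Proposition 6.2 of the cited reference, so there is no internal proof to compare against. Your proposal supplies a self-contained derivation along the standard route (pointwise smoothness of $Y\mapsto\min_i\|x-y_i\|^2$ off the bisector hyperplanes, a Lipschitz domination to differentiate under the integral, then continuity of the candidate gradient to upgrade Gateaux to $\mathcal C^1$), and each step holds: for $Y\notin D_N$ the ambiguity set is a finite union of genuine hyperplanes, hence Lebesgue- and $\mu$-null; the difference quotients are dominated by $2(R+\rho)\|V\|f(x)$ with $f$ integrable; and $Y\mapsto\mu(V_i(Y))$ and $Y\mapsto\int_{V_i(Y)}x\,d\mu$ are continuous off $D_N$ because the cell indicators converge a.e. You are also right about which hypotheses do the work (absolute continuity and compact support, not subanaliticity or convexity). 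The only point worth tightening is the final substitution: the proposition is stated for arbitrary $Y\in(\R^d)^N\setminus D_N$, where some cell may satisfy $\mu(V_i(Y))=0$ and $T_N(Y)_i$ is then undefined; the clean statement of what you actually prove is $\nabla_{y_i}G_N(Y)=\mu(V_i(Y))\,y_i-\int_{V_i(Y)}x\,d\mu(x)$, which coincides with $M(V(Y))(Y-T_N(Y))$ wherever $T_N$ is defined (in particular on $\supp\mu\setminus D_N$, where the iterates live). You flag this yourself, so it is a presentational caveat rather than a gap.
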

We denote $(Y_n)_{n\geq 0}$ the Lloyd sequence in Algorithm \ref{alg:LOYD_optimal_quant}. Since $Y_n \in (\supp \mu)^N \setminus D_N$ for all $n$, the corresponding Voronoi cells are non empty and the diagonal matrix $M(V(Y_n))$ in Proposition \ref{prop:lloydOptimalGradient} is invertible for all $n \in \N$. Therefore, $Y_n$ can be rewritten for all $n\geq 1$ in a pre-conditioned gradient descent formulation as 
\begin{equation}\label{LloydCVT_algo_2}
	Y_{n+1}=Y_{n}-M(V(Y_{n}))^{-1}\  \nabla G_{N}(Y_{n}).
\end{equation}
It turns out that the diagonal matrix $M(V(Y_n))$ remains bounded away from $0$ for all $n \in \N$ as a consequence of \cite[Theorem 3.6]{nondegeneracyLloyd} since Assumption \ref{ass:targetMeasure} ensures that \cite[Assumption 3.1]{nondegeneracyLloyd} holds; see Remark \ref{rem:densityPositive}.
\begin{prop}[Corollary 3.7 in \cite{nondegeneracyLloyd}]
	Let $\mu$ be as in Assumption \ref{ass:convexSupport} and $(Y_{n})_{n\geq 0}$ be the Lloyd sequence defined in Algorithm \ref{alg:LOYD_optimal_quant}, then there exists $\ell>0$ such that for all $i=1,\ldots,N$
	\begin{equation}
	    \label{eq:mu_bounded}
	    \inf_{n\geq 0} \mu(V_{i}(Y_{n}))\geq \ell.
	\end{equation}	
	\label{prop:muBounded}
\end{prop}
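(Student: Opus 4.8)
The plan is to reduce the claim to a single geometric fact: that the Lloyd iterates remain uniformly bounded away from the generalized diagonal $D_N$, and then to convert such a separation into a uniform lower bound on the cell masses. For the second (easier) implication, suppose $\min_{i\ne j}\|y_i^{(n)}-y_j^{(n)}\|\ge\delta$ for all large $n$. Then for each $i$ the ball $B_d(y_i^{(n)},\delta/2)$ is contained in the Voronoi cell $V_i(Y_n)$ of \eqref{def:Voronoi_cells}: if $\|x-y_i^{(n)}\|<\delta/2$ then $\|x-y_j^{(n)}\|\ge\|y_i^{(n)}-y_j^{(n)}\|-\|x-y_i^{(n)}\|>\delta/2$ for every $j\ne i$. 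Since the iterates stay in the convex compact support under Assumption \ref{ass:convexSupport}, we have $y_i^{(n)}\in\supp\mu$, and it remains to bound $\mu(B_d(y,\delta/2))$ from below uniformly in $y\in\supp\mu$. The map $y\mapsto\mu(B_d(y,\delta/2))$ is lower semicontinuous and strictly positive on $\supp\mu$, since every open ball centred at a support point has positive mass; hence its infimum over the compact set $\supp\mu$ is a positive constant. Together with the fact that the finitely many initial iterates lie in $\supp\mu\setminus D_N$ and already have positive cell masses, this yields the desired uniform $\ell>0$.

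It therefore remains to establish that $\inf_n\min_{i\ne j}\|y_i^{(n)}-y_j^{(n)}\|>0$, and my strategy here is a compactness-and-energy argument. First, $G_N$ in \eqref{eq:optimal_quant} is continuous on all of $(\R^d)^N$, because the integrand $\min_i\|x-y_i\|^2$ depends continuously on $Y$ uniformly for $x$ in the compact support, and it is non-increasing along the iterates: the centroidal update $y_i\mapsto T_N(Y)_i$ decreases the energy by $\tfrac12\sum_i\mu(V_i(Y))\|y_i-T_N(Y)_i\|^2$ through the parallel-axis identity, and the subsequent Voronoi reassignment can only decrease it further. Hence $G_N(Y_n)$ decreases to a limit $L$, the per-step decrements tend to $0$, and in particular $\sum_i\mu(V_i(Y_n))\|y_i^{(n)}-y_i^{(n+1)}\|^2\to0$. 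One then argues by contradiction: if a subsequence $Y_{n_m}$ approached $D_N$, extract $Y_{n_m}\to Y^\ast\in D_N$ in the compact set $(\supp\mu)^N$, so that $Y^\ast$ has a cluster of at least two support points collapsing to a common $p\in\supp\mu$. Such a cluster is energetically wasteful: because $f>0$ on a full-measure open neighbourhood of $p$ by Remark \ref{rem:densityPositive}, the region served by the cluster carries positive variance, so the barycentric displacements of the cluster cells ought to be bounded below, forcing a definite energy decrease at the step $Y_{n_m}\mapsto T_N(Y_{n_m})$ and contradicting $G_N(Y_n)\downarrow L$.

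The main obstacle is exactly this last point for clusters of three or more points. A point lying in the interior of the convex hull of a collapsing cluster has a cell that stays confined near $p$ and whose mass, and whose barycentric displacement, tend to $0$; these are precisely the configurations the proposition must exclude, and the naive one-step decrease argument gives no control over them. Handling them requires the finer, quantitative multi-step non-degeneracy analysis of \cite{nondegeneracyLloyd}. Concretely, I would verify that Assumption \ref{ass:targetMeasure} together with convex support implies the density lower-bound hypothesis (Assumption 3.1) of \cite{nondegeneracyLloyd}, using the positivity of $f$ on a full-measure open dense subset of $\supp\mu$ from Remark \ref{rem:densityPositive}, and then invoke their Theorem 3.6, whose conclusion delivers exactly the uniform separation $\inf_n\min_{i\ne j}\|y_i^{(n)}-y_j^{(n)}\|>0$ needed to close the argument above.
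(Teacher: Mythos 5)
Your proposal is correct and ultimately takes the same route as the paper: the statement is imported from \cite{nondegeneracyLloyd} (their Theorem 3.6 / Corollary 3.7) after checking that Assumption \ref{ass:targetMeasure} implies their density hypothesis (Assumption 3.1) via Remark \ref{rem:densityPositive}, which is exactly all the paper does. Your extra reduction from uniform point separation to a uniform cell-mass lower bound (balls of radius $\delta/2$ inside the Voronoi cells, plus lower semicontinuity and compactness of $y\mapsto\mu(B_d(y,\delta/2))$ on $\supp\mu$) is sound, and you correctly identify that the collapsing-cluster configurations are the genuinely hard step that the one-step energy-decrease argument cannot handle and that must come from the cited quantitative non-degeneracy analysis.
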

As a consequence, we get the following strong descent condition.
\begin{lem}\label{SGD_LloydCVT}
	Let $\mu$ be as in Assumption \ref{ass:convexSupport} and $(Y_{n})_{n\geq 0}$ be the Lloyd sequence defined Algorithm \ref{alg:LOYD_optimal_quant}. Then, given $\ell$ as in \eqref{eq:mu_bounded}, for all $n \in \N$, 
	\begin{align}
		&G_N(Y_n)-G_N(Y_{n+1})\geq \frac{\ell}{2} \|\nabla G_N(Y_{n})\|  \ \|Y_{n+1}-Y_{n}\|. \label{SGD1} \tag{SDC}
	\end{align}
\end{lem}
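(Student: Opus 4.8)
The plan is to read one Lloyd step as the exact minimization of a quadratic surrogate obtained by \emph{freezing} the Voronoi cells, derive from it the classical centroidal descent inequality, and then convert that descent into the product form \eqref{SGD1} using the gradient identity \eqref{eq:grad_GN} together with the two-sided bound $\ell \le \mu(V_i(Y_n)) \le 1$.

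First I would introduce the auxiliary function
\[
\widetilde{G}(Z;Y) \;=\; \frac{1}{2}\sum_{i=1}^N \int_{V_i(Y)} \|x - z_i\|^2 f(x)\,dx, \qquad Z=(z_i)_{i=1}^N,
\]
which keeps the cells fixed at $V_i(Y)$ while varying the centers. Two elementary facts drive the argument. On one hand, since the Voronoi cells partition $\R^d$ up to a Lebesgue-null set and $\min_j\|x-z_j\|^2 \le \|x-z_i\|^2$ on $V_i(Y)$, one has the majorization $G_N(Z) \le \widetilde G(Z;Y)$ for every $Z$, while $\widetilde G(Y;Y)=G_N(Y)$ for $Y\notin D_N$ by \eqref{eq:optimal_quant2}. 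On the other hand, $Z \mapsto \widetilde G(Z;Y)$ is separable and quadratic, each summand having Hessian $\mu(V_i(Y))\,I_d$ and minimizer the centroid $T_N(Y)_i$; hence the exact expansion
\[
\widetilde G(Z;Y) \;=\; \widetilde G\bigl(T_N(Y);Y\bigr) + \frac12\sum_{i=1}^N \mu(V_i(Y))\,\|z_i - T_N(Y)_i\|^2 .
\]
Specializing to $Y=Y_n$, $Z=Y_n$, and recalling $Y_{n+1}=T_N(Y_n)$, I would chain the two facts as
\[
G_N(Y_n) - G_N(Y_{n+1}) \;\ge\; \widetilde G(Y_n;Y_n) - \widetilde G(Y_{n+1};Y_n) \;=\; \frac12\sum_{i=1}^N \mu(V_i(Y_n))\,\|Y_{n,i}-Y_{n+1,i}\|^2,
\]
the standard centroidal descent bound. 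Writing $a_i=\mu(V_i(Y_n))$ and $d_i = Y_{n,i}-Y_{n+1,i}$, this lower bound is $S:=\tfrac12\sum_i a_i\|d_i\|^2$.

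It then remains to show $S \ge \tfrac{\ell}{2}\,\|\nabla G_N(Y_n)\|\,\|Y_{n+1}-Y_n\|$, which is the delicate algebraic core. By the gradient formula \eqref{eq:grad_GN}, $\nabla G_N(Y_n)_i = a_i d_i$, so (with the matrix/Frobenius interpretation of $Y$) $\|\nabla G_N(Y_n)\|^2=\sum_i a_i^2\|d_i\|^2$ and $\|Y_{n+1}-Y_n\|^2=\sum_i \|d_i\|^2$. The point is to use the two bounds in opposite directions: since $a_i\le 1$ (as $\mu$ is a probability measure and the cells are disjoint), $\sum_i a_i^2\|d_i\|^2 \le \sum_i a_i\|d_i\|^2 = 2S$, giving $\|\nabla G_N(Y_n)\|\le \sqrt{2S}$; and since $a_i\ge \ell$ by Proposition \ref{prop:muBounded}, $\sum_i\|d_i\|^2 \le \tfrac1\ell\sum_i a_i\|d_i\|^2 = \tfrac{2S}{\ell}$, giving $\|Y_{n+1}-Y_n\|\le\sqrt{2S/\ell}$. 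Multiplying these and using $\ell\le 1$ (any admissible $\ell$ in \eqref{eq:mu_bounded} satisfies $\ell\le\mu(V_i(Y_0))\le 1$) yields $\tfrac\ell2\|\nabla G_N(Y_n)\|\,\|Y_{n+1}-Y_n\| \le \tfrac\ell2\cdot\tfrac{2S}{\sqrt\ell}=\sqrt{\ell}\,S \le S$, which combined with the centroidal descent bound gives \eqref{SGD1}.

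The step I expect to be the main obstacle is precisely this last conversion: it crucially exploits that the cell masses $a_i$ are bounded above by $1$ (to control the gradient factor) and below by $\ell$ (to control the step-length factor), and that $\ell\le 1$, so that the two Cauchy–Schwarz-type estimates combine with exactly the right constant. Everything else — the majorization $G_N\le\widetilde G$, the exact quadratic expansion around the centroids, and the gradient identity — is routine once the Voronoi cells are frozen.
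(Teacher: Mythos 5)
Your proof is correct and follows essentially the same route as the paper: freeze the Voronoi cells to get the quadratic surrogate, use the centroid orthogonality to obtain $G_N(Y_n)-G_N(Y_{n+1})\geq \frac12\sum_i \mu(V_i(Y_n))\|y_i^{n+1}-y_i^n\|^2$, then convert via the gradient identity and the bounds $\ell\leq\mu(V_i(Y_n))\leq 1$. The only (immaterial) difference is the last algebraic step: the paper chains $\sum_i a_i\|d_i\|^2\geq\sum_i a_i^2\|d_i\|^2=\|\nabla G_N(Y_n)\|\bigl(\sum_i a_i^2\|d_i\|^2\bigr)^{1/2}\geq \ell\,\|\nabla G_N(Y_n)\|\,\|Y_{n+1}-Y_n\|$ directly, whereas you bound the two factors separately and lose a harmless factor $\sqrt{\ell}$.
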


\begin{proof}
	Recall that $Y_n \not \in D_N$ for all $n \in \N$.
	Let us define for all $Y,Z\in(\R^d)^N \setminus D_N$ and for $i=1,\ldots,N$ the function $\mathcal{H}_{i}(Y,Z)=\int_{V_{i}(Y)}\|x-z_{i}\|^{2}f(x)dx$. Then for all $i=1, \ldots, N$, we have
	\begin{align*}
            \mathcal{H}_{i}(Y_{n},Y_{n})&=\int_{V_{i}(Y_{n})}\|x-y^{n}_{i}\|^{2}f(x)dx\\
            &=\int_{V_{i}(Y_{n})}\|x-y_{i}^{n+1}\|^{2}f(x)dx + \int_{V_{i}(Y_{n})}\|y_{i}^{n+1}-y_{i}^{n}\|^{2}f(x)dx\\
            &=\mathcal{H}_{i}(Y_{n},Y_{n+1})+\mu(V_{i}(Y_{n}))\|y_{i}^{n+1}-y_{i}^{n}\|^{2}.
    \end{align*}
where the second equality is verified since by construction of the sequence $(Y_{n})_{n\geq 0}$ in \eqref{LloydCVT_algo_2} , we have $y_i^{n+1}=\frac{1}{\mu(V_i(Y_n))}\int_{V_i(Y_n)}xd\mu(x)$ and thus $\int_{V_{i}(Y_{n})}\langle x-y_{i}^{n+1},y_{i}^{n+1}-y_{i}^{n} \rangle d\mu(x)=0$ .  
	Additionally, denoting $\HH(Y,Z) = \sum_{i=1}^N \HH_i(Y,Z)$, we get by Lemma 2.1 of \cite{LloydCVT} and using \eqref{eq:optimal_quant2}, that for all $n \in \N$,
	\begin{align*}
		\min_{Z\in(\R^d)^N}\ \HH(Z,Y_{n})&=\HH(Y_{n},Y_{n}) = 2G_N(Y_{n}). 
	\end{align*}
	We deduce that
    \begin{align}
		G_{N}(Y_{n})& = \frac{1}{2} \sum_{i=1}^N \mathcal{H}_{i}(Y_{n},Y_{n})\geq G_{N}(Y_{n+1})+ \frac{1}{2}\sum_{i=1}^{N}\mu(V_{i}(Y_{n}))\|y_{i}^{n+1}-y_{i}^{n}\|^{2}\label{SGD_LLoyd2}\\
        &\geq G_{N}(Y_{n+1})+\frac{1}{2}\sum_{i=1}^{N}\mu(V_{i}(Y_{n}))^{2}\|y_{i}^{n+1}-y_{i}^{n}\|^{2}\nonumber\\
        &=G_{N}(Y_{n+1})+\frac{1}{2}\|\nabla G_{N}(Y_{n})\|\left(\sum_{i=1}^{N}\mu(V_{i}(Y_{n}))^{2}\|y_{i}^{n+1}-y_{i}^{n}\|^{2}\right)^{1/2}\nonumber\\
        &\geq G_{N}(Y_{n+1})+\frac{1}{2}\|\nabla G_{N}(Y_{n})\|\ \min_{i=1,\ldots,N}\mu(V_{i}(Y_{n}))\|Y_{n+1}-Y_{n}\|\nonumber. 
    \end{align}
	From this we get condition \eqref{SGD1} using Proposition \ref{prop:muBounded}.
\end{proof}
This strong descent condition allows to prove sequential convergence provided that $G_N$ is a KL function \cite{AbsMahAnd2005}. This is stated in the following lemma whose proof rely on definability results postponed to Section \ref{section_rigidity_of_semi_discrete_losses}.

\begin{lem}\label{lem_mu_GSA_implies_G_KL}
	Let $\mu$ be as in Assumption \ref{ass:convexSupport}, then the optimal quantization functional $G_{N}$ defined in \eqref{eq:optimal_quant} is a KL function on $(\R^d)^N \setminus D_N$.
\end{lem}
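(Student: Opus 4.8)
The plan is to invoke Kurdyka's theorem (Theorem \ref{th:kurdyka}), which reduces the KL property to two conditions: that $G_N$ is $\mathcal{C}^1$ on $(\R^d)^N \setminus D_N$, and that $G_N$ is definable in some o-minimal structure. The first condition is already supplied by Proposition \ref{prop:lloydOptimalGradient}, which gives an explicit gradient formula on $(\R^d)^N \setminus D_N$; in fact the gradient is continuous there since the Voronoi weights $\mu(V_i(Y))$ and the barycenters $T_N(Y)$ depend continuously on $Y$ away from the diagonal. So the entire substance of the lemma is the definability of $G_N$.

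For the definability step I would argue as follows. Under Assumption \ref{ass:convexSupport} (in particular Assumption \ref{ass:targetMeasure}), the density $f$ is globally subanalytic. The integrand in the formula
\[
G_N(Y) = \frac{1}{2}\int \min_{i=1,\ldots,N} \|x - y_i\|^2\, f(x)\, dx
\]
is the product of $f(x)$ with $\min_i \|x-y_i\|^2$, the latter being a semi-algebraic (hence globally subanalytic) function of $(x,Y)$. Since global subanaliticity is preserved under products and finite minima, the map $(x,Y) \mapsto \min_i \|x-y_i\|^2 f(x)$ is globally subanalytic. The crucial point is that $G_N$ is obtained by integrating this globally subanalytic integrand in the variable $x$, with $Y$ as a parameter. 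By the theory of parameterized integrals of globally subanalytic functions \cite{lion1998integration,comte2000nature,Cluckers2009StabilityUI}, such an integral is a log-analytic function and is therefore definable in the o-minimal structure $\R_{\text{an,exp}}$. I would cite the precise definability statement that is established in Section \ref{section_rigidity_of_semi_discrete_losses} (this lemma's proof explicitly defers to those results), applying it to the integrand above to conclude that $G_N$ is definable on $(\R^d)^N$, and a fortiori on the open set $(\R^d)^N \setminus D_N$.

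Having established both that $G_N$ is $\mathcal{C}^1$ on $(\R^d)^N \setminus D_N$ and that it is definable, Theorem \ref{th:kurdyka} applied on the open set $V = (\R^d)^N \setminus D_N$ yields that $G_N$ has the KL property on $V$, which is exactly the claim. The one subtlety to handle with care is the domain: Kurdyka's theorem is stated for a $\mathcal{C}^1$ definable function on an open set, and $G_N$ is only $\mathcal{C}^1$ on $(\R^d)^N \setminus D_N$, not on all of $(\R^d)^N$. This is not an obstruction because $(\R^d)^N \setminus D_N$ is itself open and $G_N$ restricted to it is both $\mathcal{C}^1$ and definable, so the theorem applies directly with $V$ taken to be this open set.

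The main obstacle is not in the convergence machinery but in the definability of the integral $G_N$: integration does not in general preserve definability within a fixed o-minimal structure, so one genuinely needs the log-analyticity results for globally subanalytic integrands, which is why the proof must lean on the technical developments of Section \ref{section_rigidity_of_semi_discrete_losses}. Everything else — continuity of the gradient, openness of the domain, and the reduction to Kurdyka's theorem — is routine once definability is in hand.
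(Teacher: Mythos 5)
Your proposal is correct and follows essentially the same route as the paper: the paper's proof of this lemma is a one-line combination of Lemma \ref{lem_G_definable} (definability of $G_N$ in $\mathbb{R}_{\text{an,exp}}$ via the globally subanalytic integrand $\min_i\|x-y_i\|^2 f(x)$ and the integration result of Lemma \ref{integrale_def}) with Kurdyka's Theorem \ref{th:kurdyka}, which is exactly your argument. Your additional remarks on the $\mathcal{C}^1$ regularity from Proposition \ref{prop:lloydOptimalGradient} and on applying Kurdyka's theorem on the open set $(\R^d)^N\setminus D_N$ are sound and simply make explicit what the paper leaves implicit.
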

\begin{proof}
	This result is a consequence of Lemma \ref{lem_G_definable} and Theorem \ref{th:kurdyka}.
\end{proof}

The standard analysis of gradient method under KL assumption allows to conclude about convergence of Lloyd's iterates for optimal quantization, the proof of which is given below.

\begin{proof}[Proof of Theorem \ref{th:main_Optimal_Quant}]
	From Lemma \ref{lem_mu_GSA_implies_G_KL}, we have that $G_{N}$ is a KL function. We can then apply \cite[Theorem 3.4]{AbsMahAnd2005}, which requires the strong descent condition \eqref{SGD1} and the property that $G_N(Y_{n+1}) = G_N(Y_n)$ implies $Y_{n+1} = Y_n$, a property satisfied in our case due to the descent condition and the expression in \eqref{LloydCVT_algo_2}. The convergence of the iterates then follows. Note that \cite[Theorem 3.4]{AbsMahAnd2005} is stated without a domain. However the main proof mechanism is a trap argument which is purely local and thus can easily be extended to a closed set containing all the iterates in an open domain. In our setting we work on $(\R^d)^N \setminus D_N$, and Proposition \ref{prop:muBounded} ensures that the sequence $(Y_n)_{n\geq 0}$ in Algorithm \ref{alg:LOYD_optimal_quant} remains away from the generalized diagonal $D_N$ so that the trap argument applies. See also \cite{attouch2013convergence} for a general account using KL functions. 
\end{proof}

\subsection{Lloyd's sequence for uniform quantization}
\label{sec:uniform_quant}
The objective function $F_N$ in \eqref{Objective_function} is differentiable outside the generalized diagonal, and the expression of its gradient is given in the following proposition.
\begin{prop}[Proposition 1 in \cite{nonassymptotic}]
\label{Gradient}
	Let $\mu$ be as in Assumption \ref{ass:targetMeasure}, then for all $Y\in(\R^d)^N\setminus D_{N}$, we have
    \begin{equation}
        \nabla F_{N}(Y)=\frac{1}{N}\left(Y-B_{N}(Y)\right),
        \label{eq:grad_FN}
    \end{equation}
	where $B_N$ is the Laguerre barycentric mapping described in \eqref{def:B_N}.
\end{prop}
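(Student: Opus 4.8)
The plan is to prove \eqref{eq:grad_FN} by combining the Kantorovich dual representation \eqref{Objective_function} with an envelope (Danskin-type) argument, followed by a direct differentiation of the inner objective $\Phi$ in \eqref{Objective_G} and a use of the optimality condition satisfied by the power cells at $w=\phi(Y)$.

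First I would compute the partial gradient of $\Phi(\cdot,w)$ with respect to the support points for a \emph{fixed} weight vector $w$. Writing $\Phi(Y,w)=\frac12\int_{\R^d}\min_{j}\big(\|x-y_j\|^2-w_j\big)f(x)\,dx+\frac1{2N}\sum_{j}w_j$, the only $Y$-dependent term is the integral of a pointwise minimum of finitely many smooth functions. This integrand is locally Lipschitz in $Y$ with a dominating integrable constant, since $f$ is integrable with compact support, so dominated convergence licenses differentiation under the integral sign. Off the power-cell boundaries, which form a Lebesgue-null set, the active index on $\Pow_i(Y,w)$ is $i$ and $\partial_{y_i}\|x-y_i\|^2=2(y_i-x)$, whence $\nabla_{y_i}\Phi(Y,w)=\int_{\Pow_i(Y,w)}(y_i-x)f(x)\,dx$. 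I would record that this expression is jointly continuous in $(Y,w)$, since the power cells \eqref{power_cells} vary continuously and their boundaries are $\mu$-null.

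Next I would transfer this identity to $F_N$ via the envelope theorem. Since $F_N(Y)=\max_w\Phi(Y,w)$ with the maximum attained at $\phi(Y)$ (Remark \ref{rem_unicity}), and since $\Phi(Y,w+c\mathbf 1)=\Phi(Y,w)$ because $\mu$ is a probability measure, the maximizer is unique only up to an additive constant; crucially the power cells, and hence $\nabla_Y\Phi(Y,\cdot)$, are invariant under this shift, so $\nabla_Y\Phi(Y,\phi(Y))$ is unambiguous. To avoid assuming smoothness of $Y\mapsto\phi(Y)$, I would sandwich the increment: for $Y'=Y+h$ away from $D_N$, suboptimality of $\phi(Y)$ at $Y'$ gives $F_N(Y')-F_N(Y)\ge\Phi(Y',\phi(Y))-\Phi(Y,\phi(Y))$, while suboptimality of $\phi(Y')$ at $Y$ gives $F_N(Y')-F_N(Y)\le\Phi(Y',\phi(Y'))-\Phi(Y,\phi(Y'))$. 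A first-order expansion of the right-hand sides, together with the joint continuity of $\nabla_Y\Phi$ and the convergence up to a constant of $\phi(Y')$ to $\phi(Y)$ as $h\to 0$, squeezes both bounds to $\langle\nabla_Y\Phi(Y,\phi(Y)),h\rangle+o(\|h\|)$, yielding $\nabla F_N(Y)=\nabla_Y\Phi(Y,\phi(Y))$.

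Finally I would invoke the optimality condition. At $w=\phi(Y)$ each power cell satisfies $\mu(\Pow_i(Y,\phi(Y)))=1/N$, as recalled after \eqref{Objective_function}. Hence $\nabla_{y_i}F_N(Y)=\int_{\Pow_i(Y,\phi(Y))}(y_i-x)f(x)\,dx=\frac1N y_i-\int_{\Pow_i(Y,\phi(Y))}x\,d\mu(x)=\frac1N\big(y_i-(B_N(Y))_i\big)$ by the definition of $B_N$ in \eqref{def:B_N}. Stacking over $i=1,\ldots,N$ gives \eqref{eq:grad_FN}. I expect the main obstacle to be the rigorous envelope step, namely justifying $\nabla F_N=\nabla_Y\Phi(\cdot,\phi(\cdot))$ without differentiability of the non-unique maximizer $\phi$; the sandwich argument above circumvents this, its only real requirements being the joint continuity of the power-cell integrals and the stability of the dual optimizer, both of which follow from the compact support and integrability of $f$.
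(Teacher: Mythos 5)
The paper does not prove this statement at all: it is imported verbatim as Proposition 1 of \cite{nonassymptotic}, so there is no internal proof to compare against. Your argument is a correct self-contained derivation and is essentially the standard one behind the cited result: Kantorovich duality, differentiation of $\Phi(\cdot,w)$ under the integral sign (valid because for $Y\notin D_N$ the tie sets between cells are hyperplanes, hence Lebesgue-null), the two-sided suboptimality sandwich in place of a smooth envelope theorem, and finally the dual first-order condition $\mu(\Pow_i(Y,\phi(Y)))=1/N$ to convert $\int_{\Pow_i}(y_i-x)\,d\mu$ into $\frac1N\bigl(y_i-(B_N(Y))_i\bigr)$. The constants check out, including the factor $\tfrac12$ in \eqref{Objective_G} cancelling against $\partial_{y_i}\|x-y_i\|^2=2(y_i-x)$.

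The one step you assert rather than prove is the stability of the dual optimizer, i.e.\ that the normalized $\phi(Y')$ converges to $\phi(Y)$ as $Y'\to Y$; this does not follow merely from ``compact support and integrability of $f$'' as you claim. It requires (a) a uniform bound on the normalized maximizers near $Y$ (which follows because each cell must carry mass $1/N>0$, forcing the differences $w_i-w_j$ to be bounded in terms of $\diam(\supp\mu)$ and the $y_i$), and (b) uniqueness up to constants of the maximizer at $Y$ (Remark \ref{rem_unicity}, which in turn relies on the density being positive on a full-measure subset of its support, cf.\ Remark \ref{rem:densityPositive}); one then concludes by extracting convergent subsequences and passing to the limit in the optimality. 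With that compactness argument supplied, your proof is complete. Note also that the upper half of your sandwich only needs joint continuity of $\nabla_Y\Phi$ together with a mean-value expansion along the segment $[Y,Y']$, both of which you correctly identify.
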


\begin{rem}\label{rem_unicity}
The vector $\phi(Y)\in \argmax_{w\in \mathbb{R}^{N}} \ \Phi(Y,w)$ appearing in $F_N$ in the definition of \eqref{Objective_function} is unique up to the addition of a constant (see e.g. Theorem 1.17 in \cite{booksantambrogio}). In order to have uniqueness, it is sufficient to add a constraint on the set of maximizers, such as setting the mean of the components of $w$ to be 0 as done in \cite{refnewton}. Furthermore, all elements in the argmax describe the same power cells.
\end{rem}
As in Section \ref{sec:optimal_quantization}, Proposition \ref{Gradient} allows to interpret the Lloyd sequence for uniform quantization in Algorithm \ref{alg:LOYD_uniform_quant}
as gradient iterations on the uniform quantization functional $F_N$ in \eqref{Objective_function}.
Similarly it is possible to show that $F_N$ restricted to $(\R^d)^N\setminus D_N$ is a KL function, based on its definability, whose proof is postponed to Section \ref{section_rigidity_of_semi_discrete_losses}.
\begin{lem}\label{lem_mu_GSA_implies_F_KL}
	Let $\mu$ be as in Assumption \ref{ass:targetMeasure}, then the objective function $F_{N}$ defined in \eqref{Objective_function} is a KL function on $(\R^d)^N \setminus D_N$.
\end{lem}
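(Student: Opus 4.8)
The plan is to deduce the KL property directly from definability via Kurdyka's theorem (Theorem \ref{th:kurdyka}), exactly as in the proof of Lemma \ref{lem_mu_GSA_implies_G_KL}. Two ingredients are needed: that $F_N$ is $\mathcal{C}^1$ on the open set $(\R^d)^N \setminus D_N$, and that it is definable in an o-minimal structure. The first is immediate from Proposition \ref{Gradient}, which provides an explicit continuous gradient of $F_N$ away from the generalized diagonal. Thus the entire difficulty is concentrated in establishing definability of $F_N$, after which Theorem \ref{th:kurdyka} closes the argument on $(\R^d)^N\setminus D_N$.

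To obtain definability I would work from the Kantorovich dual representation \eqref{Objective_function}, namely $F_N(Y) = \max_{w \in \R^N} \Phi(Y,w)$. Since definability is preserved under partial maximization (a projection-type operation in any o-minimal structure), it suffices to show that $\Phi$ is definable in the joint variable $(Y,w)$. Inspecting \eqref{Objective_G}, the term $\frac{1}{2N}\sum_i w_i$ is polynomial hence semi-algebraic, so the crux is the parameterized integral $\int_{\R^d}\big(\min_{i} \|x-y_i\|^2 - w_i\big) f(x)\,dx$.

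The main obstacle is that o-minimal structures are not stable under integration, so one cannot simply chain closure properties. The resolution is the log-analytic nature of globally subanalytic integrals \cite{lion1998integration,comte2000nature,Cluckers2009StabilityUI}: when the integrand is globally subanalytic in all its variables, the resulting partial integral is definable in $\mathbb{R}_{\mathrm{an,exp}}$. I would therefore verify that $(Y,w,x)\mapsto (\min_i \|x-y_i\|^2 - w_i) f(x)$ is globally subanalytic. The map $\min_i \|x-y_i\|^2 - w_i$ is semi-algebraic, hence globally subanalytic, and $f$ is globally subanalytic by Assumption \ref{ass:targetMeasure}; products and sums of such functions remain globally subanalytic. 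The compact support of $\mu$ is essential here, since it confines the integration to a bounded region and makes global subanalyticity (a condition controlling behaviour at infinity) directly applicable. These verifications are precisely the content deferred to Section \ref{section_rigidity_of_semi_discrete_losses}.

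Granting definability of $F_N$ on $(\R^d)^N\setminus D_N$, Theorem \ref{th:kurdyka} yields the KL property at every point of this open set, which is the claim. In writing the proof formally I would simply cite the definability result for $F_N$ established in Section \ref{section_rigidity_of_semi_discrete_losses} together with Theorem \ref{th:kurdyka}, mirroring the short proof of Lemma \ref{lem_mu_GSA_implies_G_KL}.
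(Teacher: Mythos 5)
Your proposal is correct and follows essentially the same route as the paper: definability of $F_N$ is obtained from the dual representation \eqref{Objective_function} by combining the stability of globally subanalytic integrands under integration (Lemma \ref{integrale_def} applied to $\Phi$) with stability under partial maximization (Lemma \ref{minmax}), and the KL property then follows from Theorem \ref{th:kurdyka}. The paper's proof simply cites Lemma \ref{lem_F_definable} and Theorem \ref{th:kurdyka}, which is exactly the argument you outline.
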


\begin{proof}
	It is a direct consequence of Lemma \ref{lem_F_definable} and Theorem \ref{th:kurdyka}.
\end{proof}
We follow the same line as in the optimal quantization setting of Section \ref{sec:optimal_quantization}.

\begin{proof}[Proof of Theorem \ref{th:main}]
	The iterates remain in $(\R^d)^N \setminus D_N$ so that by Proposition \ref{Gradient} we have $Y_{n+1} = Y_n - N \nabla F_N(Y_N)$ for all $n \in \N$.
	From the proof of Proposition 2 in \cite{nonassymptotic}, we have the following inequality
	\begin{equation*}
	    F_{N}(Y_{n})-F_{N}(Y_{n+1})\geq \frac{1}{2N}\|Y_{n+1}-Y_{n}\|^{2},
	\end{equation*}
	for all $n \in \N$, which is a strong descent condition and also entails that $F_{N}(Y_{n+1}) = F_{N}(Y_{n})$ implies $Y_{n+1} = Y_n$.
	Finally Lemma \ref{lem_mu_GSA_implies_F_KL} ensures that $F_{N}$ is a KL function. The convergence follows from \cite[Theorem 3.4]{AbsMahAnd2005}, see also \cite{attouch2013convergence} for general KL functions. Note that similarly as in the proof of Theorem \ref{th:main_Optimal_Quant}, the trap argument can be extended to a compact subset containing all the iterates in $(\R^d)^N \setminus D_N$ as described in the proof of Proposition 2 in \cite{nonassymptotic}.
\end{proof}

\begin{rem}
	A possible approach to obtain asymptotic convergence rates for Lloyd's methods would be to prove a \L{}ojasiwicz inequality for $G_N$ and $F_N$, in order to then apply \cite[Theorem 11]{proximalbolte2010defKL}. The \L{}ojasiwicz inequality corresponds to the KL property in Definition \eqref{def:KL_property} for which the desingularizing function $\Psi$ is a power function. It is well-known in subanalytic geometry that real analytic, subanalytic or semi algebraic functions satisfy the \L{}ojasiwicz inequality, unfortunately the specific form of $G_N$ and $F_N$ does not allow for a precise characterization of its desingularizing function. A more detailed discussion on the topic can be found in Section \ref{subsec:rate_Lloyd}.
\end{rem}

\section{Rigidity of semi-discrete optimal transport quantization losses}
\label{section_rigidity_of_semi_discrete_losses}

In this section we show the definability in an o-minimal structure of the optimal and uniform quantization functionals  $G_{N}$ and $F_{N}$ defined respectively in \eqref{eq:optimal_quant} and \eqref{Objective_function}. This finishes the proof arguments for our main convergence results for Lloyd's algorithms in Theorem \ref{th:main_Optimal_Quant} and \ref{th:main} since by Kurdyka's result recalled in Theorem \ref{th:kurdyka}, they are therefore KL functions. 
For the sake of completeness, we also provide elementary proofs of well-known results in definable geometry. We start with the  introduction of o-minimal structures and recall the main results that will be needed further in this section.

\subsection{O-minimal structures}
\label{sec:oMin}
O-minimal structures describe families of subsets of Euclidean spaces which preserve the favorable rigidity properties of semi-algebraic sets. 
The description is axiomatic. We refer the reader to the lecture notes from Coste \cite{coste2000introduction} for an introduction, to \cite{driesmiller} for an extensive presentation of consequences of o-minimality and to \cite{loi2010lecture} for additional bibliographic pointers. We limit ourselves to structures expanding the field of real numbers.
For statements involving definable objects, if not precised otherwise, the word ``\emph{definable}'' implicitly means that all object are definable in the same o-minimal structure.

\begin{defn}
	Let $\mathcal{S}=(\mathcal{S}_{n})_{n\in \mathbb{N}}$ be such that for each $n \in \N$, $\mathcal{S}_{n}$ is a family of subsets of $\mathbb{R}^{n}$. It is called an \emph{o-minimal structure} if it satisfies the following:
	\begin{itemize}
    \itemsep0em 
		\item for each $n \in \N$, $\mathcal{S}_n$ contains all algebraic subsets (defined by polynomial equalities).
		\item for each $n \in \N$, $\mathcal{S}_n$ is a boolean sub-algebra of $\R^n$: stable under intersection, union and complement.
		\item $\mathcal{S}$ is stable under Cartesian product and projection on lower dimensional subspace.
		\item $\mathcal{S}_{1}$ is exactly the set of finite unions of points and intervals.
	\end{itemize}
	 A set belonging to an o-minimal structure is said to be definable (in this structure). A function whose graph or epigraph is definable is also called definable (in this structure).
\end{defn}

\subsubsection{Semi-algebraic subsets}
Semi-algebraic sets represent the smallest o-minimal structure. In other words, any o-minimal structure contains all the semi-algebraic sets.

\begin{defn}\label{semi_algebraic}
A set $S$ is a basic semi-algebraic subset of $\mathbb{R}^{n}$ if there exists a finite number of real polynomial functions $(f_{i})_{i=1,\ldots,N}$ and $(g_{j})_{j=1,\ldots,M}$ such that:

\begin{equation*}
    S=\{x\in \mathbb{R}^{n}\:|\:f_{1}(x)> 0,\:\ldots\:,f_{N}(x)>0,\: g_{1}(x)=0,\:\ldots\:,g_{M}(x)=0\}.
\end{equation*}
A subset which is a finite union of basic semi-algebraic subsets is semi-algebraic.
\end{defn}
The following fact is a consequence of Tarski-Seidenberg theorem, see for example \cite{coste2000introduction}.
\begin{prop}
    The collection of all semi-algebraic subsets, denoted $\mathcal{SA}$, forms an o-minimal structure. 
\end{prop}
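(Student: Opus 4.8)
The plan is to verify the four axioms in the definition of an o-minimal structure for the family $\mathcal{SA} = (\mathcal{SA}_n)_{n \in \N}$ of semi-algebraic subsets, dispatching the elementary stability properties directly and isolating the single nontrivial point, which is stability under projection.

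First I would handle the easy axioms. Algebraic subsets are zero sets of polynomials, hence basic semi-algebraic sets with only equality constraints, so the first axiom holds by definition. For the boolean algebra axiom, stability under finite union is immediate since semi-algebraic sets are by definition finite unions of basic semi-algebraic pieces; stability under intersection follows by distributing intersection over the unions and observing that the intersection of two basic semi-algebraic sets is again basic (one simply concatenates the defining polynomial conditions). Stability under complement is the only part requiring a short argument: the complement of a basic set $\{f_1 > 0, \ldots, f_N > 0,\ g_1 = 0, \ldots, g_M = 0\}$ is the set where at least one constraint fails, and each negated constraint, namely $\{f_i \leq 0\}$ or $\{g_j \neq 0\}$, is itself semi-algebraic (for instance $\{g_j \neq 0\} = \{g_j > 0\} \cup \{-g_j > 0\}$). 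Thus the complement of a basic set is semi-algebraic, and the complement of a general semi-algebraic set follows from De Morgan's laws together with the stability under finite intersection already established. For the Cartesian product part of the third axiom, the product of basic sets is basic in the product space, since the defining polynomials depend only on the respective blocks of coordinates, and one distributes over unions.

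The fourth axiom, describing $\mathcal{SA}_1$, is likewise elementary. Given any finite collection of univariate polynomials, each has finitely many real roots, so these roots partition $\R$ into finitely many open intervals and isolated points on which every polynomial in the collection has constant sign; consequently any boolean combination of the corresponding sign conditions is a finite union of points and open intervals. Conversely, a point $\{a\}$ equals $\{x - a = 0\}$ and an open interval $(a,b)$ equals $\{x - a > 0,\ b - x > 0\}$, so finite unions of points and intervals are semi-algebraic, yielding the required equality.

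The main obstacle, and the only genuinely nontrivial step, is stability under projection in the third axiom: one must show that the image of a semi-algebraic subset of $\R^{n+1}$ under the projection forgetting the last coordinate is semi-algebraic in $\R^n$. This is exactly the content of the Tarski--Seidenberg theorem, equivalently the statement that the first-order theory of real closed fields admits quantifier elimination, with projection corresponding to existential quantification over the eliminated variable. I would invoke this theorem directly, as indicated in the statement preceding the proposition, and then iterate it (composing with coordinate permutations, which trivially preserve semi-algebraicity) to obtain stability under projection onto any lower-dimensional coordinate subspace, thereby completing the verification of all four axioms.
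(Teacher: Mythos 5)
Your proof is correct and follows exactly the route the paper indicates: it dispatches the elementary axioms (algebraic sets, boolean operations, products, the univariate characterization via finiteness of polynomial roots) by direct verification and isolates stability under projection as the one nontrivial point, which it delegates to the Tarski--Seidenberg theorem, precisely the ingredient the paper cites. The paper itself gives no further detail, so your write-up is simply a fleshed-out version of the same argument.
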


\subsubsection{Globally subanalytic subsets}\label{global_subanalyticity}
The o-minimal structure of globally subanalytic sets is the one appearing in Assumption \ref{ass:targetMeasure}. It is the smallest structure which contains all restricted analytic functions, a formal definition is given below.
\begin{defn}
	A function $f:\mathbb{R}^{n}\longrightarrow \mathbb{R}$ is a restricted analytic function if there exists a function $F$ analytic on an open set containing $[-1,1]^{n}$, such that $f=F$ on $[-1,1]^n$ and $f=0$ otherwise. 
\end{defn}
\begin{prop}[\cite{van1986generalization}]
	\label{prop:globalSubanalytic}
	There exists an o-minimal structure which contains the graph of all restricted analytic functions. There is a smallest such structure denoted by $\mathbb{R}_{\text{an}}$. 
\end{prop}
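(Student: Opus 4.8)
The plan is to treat the two assertions separately, since minimality is formal once existence is in hand. For minimality, I would argue that an arbitrary intersection of o-minimal structures, all containing the graphs of the restricted analytic functions, is again such a structure. Concretely, given a family $(\mathcal{S}^\alpha)_\alpha$ of o-minimal structures each containing those graphs, set $\mathcal{S}_n := \bigcap_\alpha \mathcal{S}^\alpha_n$. Containment of all algebraic sets and of the restricted-analytic graphs is immediate, since each $\mathcal{S}^\alpha$ contains them. Stability under finite boolean operations, Cartesian products and coordinate projections passes to the intersection because a set lying in every $\mathcal{S}^\alpha$ has its complement, product and projection in every $\mathcal{S}^\alpha$ as well. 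The one-dimensional axiom is preserved because each $\mathcal{S}^\alpha_1$ equals the \emph{fixed} collection of finite unions of points and intervals, so the intersection equals that same collection. Intersecting over the collection of all such structures — nonempty by existence — then yields the smallest one, which we call $\mathbb{R}_{\text{an}}$.

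The substance is therefore the existence claim, and I would prove it by exhibiting an explicit candidate: the globally subanalytic sets. The idea is to compactify via the semialgebraic diffeomorphism $\tau \colon \R^n \to (-1,1)^n$, $x \mapsto (x_i/\sqrt{1+x_i^2})_{i}$, and to declare $A \subset \R^n$ globally subanalytic when $\tau(A)$ is subanalytic near $[-1,1]^n$, i.e. a finite boolean combination of projections of relatively compact semianalytic sets (sets cut out locally by analytic equalities and inequalities). This class is manifestly generated by the restricted analytic functions, so it contains their graphs and all algebraic sets by construction. I would then check the structure axioms: closure under finite unions, intersections and Cartesian products is routine from the definition, and closure under projection onto lower coordinates is essentially built in, once one verifies that the relative compactness needed for subanaliticity survives the compactification $\tau$.

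Two points carry the real content, and I would flag the first as the main obstacle. Closure under complement does \emph{not} follow formally from the definition; it is precisely Gabrielov's theorem of the complement, stating that the complement of a subanalytic set is again subanalytic. Its proof is the deep analytic ingredient behind the whole result, resting on the fine local geometry of analytic sets — Weierstrass preparation for convergent power series together with resolution and uniformization of subanalytic sets — and I would invoke it as an external input rather than reprove it. The second point is the dimension-one axiom: a globally subanalytic subset of $\R$ must be a finite union of points and intervals. I would reduce this through $\tau$ to the elementary fact that a one-variable real-analytic function that is not identically zero has isolated, hence finitely many, zeros on the compact interval $[-1,1]$ (identity theorem, or a Łojasiewicz-type bound); combined with closure under complement, this forces every one-dimensional definable set to have finite boundary, which gives the axiom.

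Finally, I note that the cleanest unified route packages both hard inputs into one statement. Working in the ordered-field language augmented by a symbol for each restricted analytic function and a restricted division symbol, one proves quantifier elimination for the resulting theory (Denef–van den Dries). O-minimality is then immediate: every definable subset of $\R$ becomes quantifier-free definable, hence a boolean combination of zero- and sign-sets of terms, and such terms are piecewise analytic with finitely many zeros on bounded intervals and eventually polynomial outside a compact box (the restricted functions vanishing off $[-1,1]^n$). Either way, the entire analytic difficulty concentrates in a single theorem — the theorem of the complement, equivalently the quantifier-elimination result — and the rest of the argument, including the passage to the smallest structure, is bookkeeping against the o-minimality axioms. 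I would cite \cite{van1986generalization} for the existence of $\mathbb{R}_{\text{an}}$ and carry out only the intersection argument in full detail.
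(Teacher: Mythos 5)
The paper offers no proof of this proposition: it is imported wholesale from the literature via the citation to \cite{van1986generalization}, so there is no internal argument to measure you against. Your treatment is consistent with that and adds correct detail. The minimality-by-intersection argument you spell out is the standard one and is sound: every axiom of an o-minimal structure passes to an arbitrary intersection, and the one-dimensional axiom survives because each structure in the family has the identical trace on $\R$, namely the finite unions of points and intervals. For existence you correctly isolate where all of the analytic difficulty lives --- Gabrielov's theorem of the complement, equivalently the Denef--van den Dries quantifier elimination for the restricted-analytic language --- and you rightly import it as an external input rather than reprove it; this matches what the cited reference actually establishes, so your plan to ``cite for existence, prove minimality by intersection'' is, in substance, the paper's own treatment with the routine half written out.

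One caution on the sketch itself: your reduction of the dimension-one axiom to ``a nonzero one-variable analytic function has finitely many zeros on $[-1,1]$'' is too quick as stated. A globally subanalytic subset of $\R$ is, after compactification, a boolean combination of \emph{projections} of relatively compact semianalytic subsets of higher-dimensional spaces, not the sign-set of a single analytic function of one variable; to see that such a projection is a finite union of points and intervals one again needs nontrivial structure theory (\L ojasiewicz-type stratification, uniformization, or the quantifier elimination itself, after which your term-by-term analysis does apply). Since you ultimately route the whole existence claim through the external reference, this does not invalidate your argument, but the dimension-one axiom should be credited to the same deep input rather than to the identity theorem alone.
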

An element of $\mathbb{R}_{\text{an}}$ is called globally subanalytic and definable functions in $\mathbb{R}_{\text{an}}$ are also called globally subanalytic. Note that any o-minimal structure contains the semi-algebraic subsets, thereby they are also globally subanalytic. The following lemma illustrates the versatility of globally subanalytic sets and allows to consider the probability densities in Example \ref{exmp_GSA}. It is a classical result (\cite[D.10]{driesmiller}) for which we provide a self-contained proof in Appendix \ref{sec:proofLemma} for completeness.

\begin{lem}\label{prop_exmp_GSA}
	Let  $U$ be an open subset of $\mathbb{R}^{n}$, $f:U\longrightarrow \mathbb{R}$ be an analytic function, and $K\subset U$ be a semi-algebraic compact subset of $U$. Then $f|_{K}: K \to \R$ is globally subanalytic.  
\end{lem}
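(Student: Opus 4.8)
The plan is to reduce the claim to the defining property of $\mathbb{R}_{\mathrm{an}}$, namely that the graph of every restricted analytic function is definable (Proposition \ref{prop:globalSubanalytic}), via a finite covering of $K$ by boxes on which $f$ can be turned into a restricted analytic function by an affine change of variables. Since being globally subanalytic means having a graph definable in $\mathbb{R}_{\mathrm{an}}$, it suffices to show that $\Gamma = \{(x,f(x)) : x \in K\}$ is definable in $\mathbb{R}_{\mathrm{an}}$.

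First I would localize. Since $K$ is compact and contained in the open set $U$, every point $x \in K$ admits a closed box (a product of closed intervals) $B_x$ with $x \in \mathrm{int}(B_x)$ and $B_x \subset U$. By compactness, finitely many interiors $\mathrm{int}(B_1), \ldots, \mathrm{int}(B_m)$ cover $K$, so that $K = \bigcup_{k=1}^m (K \cap B_k)$, where each $B_k \subset U$ is a closed box on a neighborhood of which $f$ is analytic.

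Next, for each $k$ I would take an affine bijection $\phi_k \colon \mathbb{R}^n \to \mathbb{R}^n$ sending $B_k$ onto the standard cube $[-1,1]^n$. Because $\phi_k^{-1}$ is affine and $B_k \subset U$ is compact with $f$ analytic on $U$, there is $\epsilon > 0$ with $\phi_k^{-1}([-1-\epsilon,1+\epsilon]^n) \subset U$, so that $g_k := f \circ \phi_k^{-1}$ is analytic on an open neighborhood of $[-1,1]^n$. Hence $g_k$ coincides on $[-1,1]^n$ with a restricted analytic function, whose graph is definable in $\mathbb{R}_{\mathrm{an}}$. Since $\phi_k$ is semi-algebraic, hence definable in any o-minimal structure, the set $\{(x,t) : \phi_k(x) \in [-1,1]^n,\ t = g_k(\phi_k(x))\}$ is definable as a composition and intersection of definable objects; and for $x \in B_k$ one has $g_k(\phi_k(x)) = f(x)$, so this set is exactly the graph of $f|_{B_k}$.

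Finally I would assemble the pieces: $\Gamma = \bigcup_{k=1}^m \big( \mathrm{graph}(f|_{B_k}) \cap (K \times \mathbb{R}) \big)$, a finite union of intersections of definable sets with the semi-algebraic set $K \times \mathbb{R}$, hence definable in $\mathbb{R}_{\mathrm{an}}$; this shows that $f|_K$ is globally subanalytic. The main obstacle, and the only delicate point, is the localization step: the definition of restricted analytic functions is tied to the fixed cube $[-1,1]^n$, whereas $f$ is only assumed analytic on an arbitrary open set $U \supset K$ of possibly complicated geometry. The finite box cover together with the affine rescaling is precisely what bridges this gap, while keeping all changes of variables semi-algebraic so as not to leave the structure.
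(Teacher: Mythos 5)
Your proposal is correct and follows essentially the same route as the paper's proof: a finite cover of $K$ by closed boxes inside $U$, affine rescaling of each box onto $[-1,1]^n$ to invoke the restricted analytic functions generating $\mathbb{R}_{\text{an}}$, definability of each local graph by composition with the (semi-algebraic) affine maps, and a finite union of the pieces intersected with $K$. The only cosmetic difference is that you intersect each local graph with $K\times\R$ where the paper multiplies by the indicator $\mathbbm{1}_K$; these are equivalent.
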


\subsubsection{Inclusion of the exponential function}
The structure $\R_{\text{an}}$ contains the graph of the exponential function restricted to any bounded interval (because it is restricted analytic), but not on the whole real line. There is a bigger o-minimal structure which contains the whole graph of the exponential function, as the following shows.
\begin{prop}[\cite{Dries1995OnTR}]
	\label{prop:anExp}
	There exists an o-minimal structure which contains all functions definable in $\mathbb{R}_{\text{an}}$ and the graph of the exponential function. There is a smallest such structure denoted by $\mathbb{R}_{\text{an,exp}}$. 
\end{prop}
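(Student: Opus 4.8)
\section*{Proof plan}

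The plan is to separate the two assertions of the proposition: the existence of \emph{some} o-minimal structure containing every set definable in $\mathbb{R}_{\text{an}}$ together with the full graph of $\exp$, and the existence of a \emph{smallest} such structure. I would dispose of the second assertion first, since it reduces to a formal argument once the first is granted.

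For the ``smallest structure'' claim, suppose at least one o-minimal structure $\mathcal{S}^{\star}$ contains all $\mathbb{R}_{\text{an}}$-definable sets and $\mathrm{graph}(\exp)$. Consider the intersection $\mathcal{S} = \bigcap \mathcal{T}$ taken over all o-minimal structures $\mathcal{T}$ enjoying this property. I would first check that $\mathcal{S}$ is again a structure: a set lying in every $\mathcal{T}_n$ has its Boolean combinations, Cartesian products and projections in every $\mathcal{T}$ as well, so these operations are preserved, and each $\mathcal{S}_n$ still contains the algebraic subsets of $\mathbb{R}^n$. The one remaining axiom, that $\mathcal{S}_1$ consists exactly of finite unions of points and intervals, is where o-minimality enters: since $\mathcal{S}_1 \subseteq \mathcal{S}^{\star}_1$ and $\mathcal{S}^{\star}$ is o-minimal, every member of $\mathcal{S}_1$ is already a finite union of points and intervals, while conversely all such sets are semi-algebraic and hence lie in $\mathcal{S}_1$. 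Thus $\mathcal{S}$ is o-minimal, contains $\mathbb{R}_{\text{an}}$ and $\mathrm{graph}(\exp)$ by construction, and is contained in every competitor, so it is the desired minimal structure $\mathbb{R}_{\text{an,exp}}$.

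The genuine mathematical content is therefore the existence assertion, and here I would rely on the model-theoretic route of van den Dries and Miller \cite{Dries1995OnTR}, which builds on Wilkie's o-minimality of the real exponential field. The strategy is to equip the real field with function symbols for all restricted analytic functions and for $\exp$ (together with auxiliary symbols for restricted division and $\log$), and to prove that the resulting theory is model complete, from which o-minimality follows by verifying that definable subsets of $\mathbb{R}$ are finite unions of points and intervals. The two ingredients I would combine are the o-minimality of $\mathbb{R}_{\text{an}}$ already granted in Proposition \ref{prop:globalSubanalytic}, which in particular supplies analytic cell decomposition and polynomial boundedness for the restricted-analytic part, and Wilkie's theorem handling the unrestricted exponential via the theory of Pfaffian functions.

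I expect the main obstacle to be exactly the finiteness statement underlying o-minimality in one variable: one must bound the number of connected components, and hence rule out infinite oscillation on all of $\mathbb{R}$, for the zero sets of the exp-analytic functions obtained by composing restricted analytic functions with $\exp$ and $\log$. Such finiteness is not preserved by naive closure operations and is precisely what forces the passage through model completeness and Khovanskii--Wilkie type bounds on the topology of exponential varieties; controlling this uniform finiteness across the whole interaction of $\exp$ with the restricted analytic functions is the technical heart of the result, and the reason the theorem is deep rather than formal.
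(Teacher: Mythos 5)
The paper states this proposition as an external result, citing \cite{Dries1995OnTR} with no proof of its own, and your plan is consistent with that: your reduction of the ``smallest structure'' claim to an intersection argument (the intersection of all o-minimal structures containing the required sets is again a structure, closed under the Boolean, product and projection operations, and inherits the one-variable finiteness axiom from any single o-minimal competitor while containing all finite unions of points and intervals because these are semi-algebraic) is the standard correct argument, and you correctly attribute the existence claim to the model-completeness route of van den Dries--Miller building on Wilkie, which is exactly the content of the cited reference. Nothing in your sketch is wrong; the only caveat is that the genuinely hard existence part remains, as you acknowledge, a pointer to deep external work rather than a self-contained proof --- which is precisely how the paper itself treats it.
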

Note that $\mathbb{R}_{\text{an,exp}}$ also contains the graph of the logarithm function which is identical to that of the exponential function up to a symmetry.

\subsection{Definability of optimal transport quantization functionals}
\label{sec:definability_quantization}
We start with the description of a technical result on the integration of globally subanalytic functions \cite{Cluckers2009StabilityUI} and then describe how it applies to our semi-discrete quantization losses under Assumption \ref{ass:targetMeasure}.

\subsubsection{Integration of globally subanalytic functions and partial minimization}
O-minimal structures are not stable under integration in general. This question constitutes one of the great challenges of the field. It is therefore not possible to directly use definability properties of the objective functions $G_N$ in \eqref{eq:optimal_quant} and $F_N$ in \eqref{Objective_function}.
Several partial answers are known, one of them allowing to treat globally subanalytic integrands \cite{lion1998integration,comte2000nature,Cluckers2009StabilityUI}.
The following lemma is a direct consequence of \cite[Theorem 1.3]{Cluckers2009StabilityUI}, we provide a detailed argument for completeness.

\begin{lem}\label{integrale_def}
	Let $f:\mathbb{R}^{d}\times U\longrightarrow \mathbb{R}$ be a globally subanalytic function with $U\subset \R^n$ open (globally subanalytic). Define  $y\mapsto F(y):=\int_{\mathbb{R}^{d}}f(x,y)dx$ and suppose it is well defined for all $y$ in $U$. Then $F$ is definable in $\mathbb{R}_\text{an,exp}$.
\end{lem}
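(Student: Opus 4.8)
The plan is to recognize $F$ as a member of the class of \emph{constructible functions} — finite sums of finite products of globally subanalytic functions and logarithms of positive globally subanalytic functions — and then to invoke the stability of this class under parameterized integration, which is precisely the content of \cite[Theorem 1.3]{Cluckers2009StabilityUI}. The point is that the integral of a globally subanalytic function need \emph{not} itself be globally subanalytic, since o-minimal structures are not stable under integration in general; what survives is the weaker log-analytic (constructible) structure. The final step is then the elementary observation that every constructible function is definable in $\mathbb{R}_{\text{an,exp}}$.

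First I would reduce to integration over the whole space. Since $U$ is open and globally subanalytic, the product $\mathbb{R}^{d}\times U$ is globally subanalytic, so its indicator $\mathbbm{1}_{\mathbb{R}^{d}\times U}$ is a globally subanalytic function. Replacing $f$ by $\tilde f := f\cdot \mathbbm{1}_{\mathbb{R}^{d}\times U}$ produces a globally subanalytic function on $\mathbb{R}^{d}\times\mathbb{R}^{n}$ whose partial integral over $x$ agrees with $F$ on $U$ and vanishes elsewhere. I would read the hypothesis that $F$ is ``well defined for all $y\in U$'' as absolute (Lebesgue) integrability of $x\mapsto f(x,y)$ for each $y$, which is exactly the standing hypothesis of \cite[Theorem 1.3]{Cluckers2009StabilityUI}. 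Being trivially constructible, $\tilde f$ falls under that theorem, which asserts that $y\mapsto \int_{\mathbb{R}^{d}}\tilde f(x,y)\,dx$ is again constructible on $\mathbb{R}^{n}$; restricting to $U$ shows $F$ is constructible. The cited theorem treats several $x$-variables at once, but one could equally iterate one-dimensional integration $d$ times and use Fubini, checking at each stage that the partial integrand stays constructible and integrable.

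It then remains to pass from constructibility to definability in $\mathbb{R}_{\text{an,exp}}$. Globally subanalytic functions are by definition definable in $\mathbb{R}_{\text{an}}$, hence in the larger structure $\mathbb{R}_{\text{an,exp}}$; the logarithm is definable in $\mathbb{R}_{\text{an,exp}}$ by Proposition \ref{prop:anExp}; and the definable functions of a fixed o-minimal structure are closed under composition, sums and products. Consequently any finite sum of products of globally subanalytic functions and their logarithms — in particular $F$ — is definable in $\mathbb{R}_{\text{an,exp}}$, which is the claim.

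The genuine difficulty lies entirely in \cite[Theorem 1.3]{Cluckers2009StabilityUI}, whose proof rests on preparation and cell-decomposition theorems for subanalytic functions and is far from routine; here it is used as a black box. On our side the only points requiring care are the interpretation of ``well defined'' as absolute integrability, so that the theorem's hypotheses are met, and — should one prefer the iterated one-variable route — the verification that the intermediate partial integrals exist and remain constructible, a concern that the multivariate form of the theorem lets us sidestep.
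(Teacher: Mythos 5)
Your proposal is correct and follows essentially the same route as the paper: both recognize $f$ as (trivially) constructible, invoke \cite[Theorem 1.3]{Cluckers2009StabilityUI} for stability of constructible functions under parameterized integration, and then observe that sums of products of globally subanalytic functions and logarithms of such are definable in $\mathbb{R}_{\text{an,exp}}$ by closure under composition. Your additional care about extending by zero outside $U$ and reading ``well defined'' as Lebesgue integrability is sound bookkeeping that the paper leaves implicit.
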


\begin{proof}
	By \cite[Theorem 1.3]{Cluckers2009StabilityUI}, if there exist $\left(f_{i}\right)_{i=1,\ldots,k}, (g_{i,j})_{\genfrac{}{}{0pt}{}{i=1,\ldots,k}{j=1,\ldots,l_{i}}}$ two families of globally subanalytic functions such that $f$ can be written
\begin{equation}\label{sum_product_log_restr_analy}
    (x,y)\mapsto f(x,y)=\sum_{i=1}^{k}f_{i}(x,y)\prod_{j=1}^{l_{i}}\log g_{i,j}(x,y),
\end{equation}
	then the parameterized integral $y\mapsto F(y)=\int_{\mathbb{R}^{d}}f(x,y)dx$ can also be written in the same form. By Lemma \ref{lem:definableComposition}, a function expressed as a sum and product of globally subanalytic functions and of the logarithm of globally subanalytic functions is definable in $\mathbb{R}_\text{an,exp}$.
\end{proof}

We conclude this section with the following classical result, which can be found in \cite[page 395]{ioffe2017variational} for semi-algebraic functions, but is still valid for general o-minimal structure. We provide a proof in Appendix \ref{sec:proofLemma} for completeness.

\begin{lem}\label{minmax} \hfill
\begin{enumerate}
    \item[(i)] Let $(f_{j})_{j\in J}$ be a finite collection of definable functions. Then $\underset{j\in J}{\min}\: f_{j}$ and $\underset{j\in J}{\max}\: f_{j}$ are definable functions. 
    \item[(ii)] Let $f:\mathbb{R}^{d}\times \mathbb{R}^{n}\longrightarrow \mathbb{R}$ be a definable function and $E$ be a definable subset of $\mathbb{R}^{n}$. Then, provided they are attained, $\underset{w\in E}{\min}\: f(\cdot,w)$ and $\underset{w\in E}{\max}\: f(\cdot,w)$ are definable functions.
\end{enumerate}
\end{lem}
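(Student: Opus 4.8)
The plan is to exploit the basic principle that a function is definable exactly when its graph is a definable set, together with the closure of any o-minimal structure under finite unions, intersections, complements, Cartesian products, and coordinate projections (the defining axioms recalled in Section~\ref{sec:oMin}).

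For part (i), I would first observe that for each $j \in J$ the graph $\Gamma_{j} = \{(x,t) : t = f_{j}(x)\}$ is definable by hypothesis, and deduce that the hypograph $H_{j} = \{(x,t) : t \leq f_{j}(x)\}$ is definable as well: it is the image, under projection onto the $(x,t)$-coordinates, of the definable set $\{(x,t,s) : s = f_{j}(x),\ t \leq s\}$, which is the intersection of a cylinder over $\Gamma_{j}$ with the semi-algebraic set $\{t \leq s\}$. The graph of $\min_{j \in J} f_{j}$ is then the finite boolean combination
\[
\Gamma_{\min} \;=\; \Big( \bigcap_{j \in J} H_{j} \Big) \cap \Big( \bigcup_{j \in J} \Gamma_{j} \Big),
\]
because $t = \min_{j} f_{j}(x)$ holds if and only if $t$ is a lower bound for all the $f_{j}(x)$ and $t = f_{j}(x)$ for at least one index $j$. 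Since $J$ is finite this is a finite boolean combination of definable sets, hence definable, and the argument for the maximum is symmetric (replacing hypographs by epigraphs).

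For part (ii), set $g(x) := \min_{w \in E} f(x,w)$ and translate its defining property into two first-order clauses, realizing the quantifiers by projection and complement. The attainment clause is existential: the set
\[
A \;=\; \{ (x,t) : \exists\, w \in E,\ t = f(x,w) \}
\]
is the projection onto the $(x,t)$-coordinates of the definable set $\{(x,t,w) : w \in E,\ t = f(x,w)\}$, hence is definable. The lower-bound clause is universal, and I would handle it through a complement: the set $\{(x,t,w) : w \in E,\ f(x,w) < t\}$ is definable, its projection $P$ onto $(x,t)$ is definable, and therefore
\[
L \;=\; \{(x,t) : \forall\, w \in E,\ t \leq f(x,w)\} \;=\; (\R^{d}\times\R)\setminus P
\]
is definable. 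The graph of $g$ is $\Gamma_{g} = L \cap A$, which is definable, so $g$ is definable; here the standing hypothesis that the minimum is attained is precisely what ensures that every point of $\Gamma_{g}$ satisfies the existential clause defining $A$. The case of the maximum is identical after reversing the inequalities.

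The only genuinely delicate point is the universal quantifier in (ii): unlike an existential one it is not itself a projection, so I route it through De Morgan's law, writing ``$\forall w$'' as the complement of ``$\exists w$ (negated)'' and invoking stability under complement. Everything else is routine bookkeeping with the boolean operations and projections guaranteed by the o-minimal axioms, so I expect no further obstacle.
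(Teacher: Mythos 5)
Your proof is correct: both clauses are legitimate first-order descriptions, the boolean combination in (i) does characterize the graph of the minimum, and in (ii) the intersection $L\cap A$ is exactly the graph of $\min_{w\in E} f(\cdot,w)$ precisely because the minimum is assumed attained (attainment is what makes the existential clause $A$ nonvacuous at every point of the graph). The paper reaches the same conclusion by a different decomposition: it works with epigraphs rather than graphs, exploiting its stated convention that a function is definable when its graph \emph{or epigraph} is definable. This makes (i) a one-line identity, $\epi(\min_{j}f_j)=\bigcup_{j}\epi(f_j)$, and collapses (ii) to a single projection, $\epi(\min_{w\in E}f(\cdot,w))=p(\epi(f))$, since ``$\min_w f(x,w)\leq y$'' is purely existential once attainment is assumed --- no universal quantifier, hence no complementation step, is needed. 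Your graph-based route costs you the extra De Morgan argument for the lower-bound clause, but it is self-contained under the more standard graph-only convention and handles the restriction to $w\in E$ explicitly, a detail the paper's formula $p(\epi(f))$ leaves implicit (strictly one should project $\epi(f)\cap(\R^d\times E\times\R)$). Both arguments rest on the same axioms (finite boolean operations and coordinate projections), so the difference is one of bookkeeping economy rather than substance.
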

\begin{rem}\label{rem_sup-inf}  
	Lemma \ref{minmax} remains true for the infimum and the supremum.
\end{rem}
\subsubsection{Definability of the optimal quantization functional}

Let us prove that the objective function $G_N$ in \eqref{eq:optimal_quant} is definable.

\begin{lem}\label{lem_G_definable}
	Under Assumption \ref{ass:targetMeasure},
	the optimal quantization function $G_{N}$ defined in \eqref{eq:optimal_quant} is definable in  $\mathbb{R}_{\text{an,exp}}$.
\end{lem}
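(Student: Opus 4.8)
The plan is to show that the integrand $(x,Y)\mapsto \min_{i=1,\ldots,N}\|x-y_i\|^2 f(x)$ is globally subanalytic, and then to invoke the integration result of Lemma \ref{integrale_def} to transfer definability to the parameterized integral defining $G_N$, thereby landing in $\mathbb{R}_{\text{an,exp}}$.

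First I would assemble the integrand from globally subanalytic pieces. Each map $(x,Y)\mapsto \|x-y_i\|^2$ is a polynomial, hence semi-algebraic and therefore globally subanalytic; by Lemma \ref{minmax}(i) their pointwise minimum $(x,Y)\mapsto \min_{i}\|x-y_i\|^2$ is globally subanalytic as well. The density $f$ is globally subanalytic by Assumption \ref{ass:targetMeasure}, and since precomposition with the coordinate projection $(x,Y)\mapsto x$ preserves definability, the map $(x,Y)\mapsto f(x)$ is globally subanalytic on $\mathbb{R}^d\times(\mathbb{R}^d)^N$. As products of definable functions remain definable in the same o-minimal structure, the full integrand is globally subanalytic.

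Next I would verify the hypotheses of Lemma \ref{integrale_def}, in particular that $Y\mapsto \int_{\mathbb{R}^d}\min_i\|x-y_i\|^2 f(x)\,dx$ is well defined for every $Y\in(\mathbb{R}^d)^N$. This follows from compactness: $\supp\mu$ is compact by Assumption \ref{ass:targetMeasure}, the integral of $f$ equals one, and for fixed $Y$ the continuous map $x\mapsto \min_i\|x-y_i\|^2$ is bounded on $\supp\mu$, so the integral is finite (and the integration over $\mathbb{R}^d$ reduces to integration over the compact support of $f$). Lemma \ref{integrale_def} then yields that this parameterized integral is definable in $\mathbb{R}_{\text{an,exp}}$, and multiplication by the constant $\tfrac12$ preserves definability, giving the claimed definability of $G_N$.

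The only genuinely delicate point is the integration step, which is precisely where o-minimality can fail in general; but this is exactly what Lemma \ref{integrale_def} (resting on the log-analytic nature of globally subanalytic integrals) is designed to handle. The remaining work is therefore just the routine verification that the integrand is globally subanalytic and that the integral is everywhere finite, both of which follow directly from the stability properties of definable functions and the compactness of $\supp\mu$.
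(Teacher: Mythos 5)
Your proposal is correct and follows essentially the same route as the paper's proof: establish that the integrand $(x,Y)\mapsto \min_i\|x-y_i\|^2 f(x)$ is globally subanalytic via Lemma \ref{minmax} and the stability properties of definable functions, then apply Lemma \ref{integrale_def} to the parameterized integral. Your added detail on the finiteness of the integral (from compactness of $\supp\mu$) and on the projection/product steps only makes explicit what the paper leaves implicit.
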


\begin{proof}
	Let $f$ be the globally subanalytic function corresponding to the density of the target measure $\mu$. By stability of definability for the minimum function (Lemma \ref{minmax}), we have that the function $(x,Y)\mapsto \underset{i=1,\ldots,N}{\min}\|x-y_{i}\|^{2}f(x)$ is globally subanalytic. Since this function is integrable we can apply Lemma \ref{integrale_def} and obtain that $G_{N}$ in \eqref{eq:optimal_quant} is definable in  $\mathbb{R}_{\text{an,exp}}$.
\end{proof}

\subsubsection{Definability of the uniform quantization functional}
We prove that the objective function $F_N$ in \eqref{eq:unif_quantization} is definable. This is based on the representation in \eqref{Objective_G} and \eqref{Objective_function} for $Y\notin D_N$, see Remark \ref{rem:unifQuantEverywhere}. 
\begin{lem}\label{lem_F_definable}
	Under Assumption \ref{ass:targetMeasure}, the uniform quantization function $F_{N}$ defined in \eqref{Objective_function} is definable in  $\mathbb{R}_{\text{an,exp}}$.
\end{lem}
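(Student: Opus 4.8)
The plan is to follow the same template as the proof of Lemma \ref{lem_G_definable}, noting that $G_N(Y)=\Phi(Y,0)$ so that the only genuinely new ingredients here are the extra parameter $w$ and the partial maximization in \eqref{Objective_function}. First I would establish that the dual objective $\Phi$ in \eqref{Objective_G} is definable in $\mathbb{R}_{\text{an,exp}}$. The integrand $(x,Y,w)\mapsto \left(\min_{i=1,\ldots,N}(\|x-y_i\|^2-w_i)\right)f(x)$ is globally subanalytic: the map $\min_i(\|x-y_i\|^2-w_i)$ is a minimum of finitely many polynomials, hence globally subanalytic by Lemma \ref{minmax}(i), and multiplying by the globally subanalytic density $f$ preserves global subanaliticity. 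For every fixed $(Y,w)$ this integrand is integrable in $x$, since $f$ is a compactly supported probability density and $\min_i(\|x-y_i\|^2-w_i)$ is continuous, hence bounded on $\supp\mu$; thus $\left|\min_i(\|x-y_i\|^2-w_i)f(x)\right|\le C\,f(x)$ on $\supp\mu$ for a constant depending on $(Y,w)$, and $\int f<\infty$. Lemma \ref{integrale_def} then gives that the partial integral $(Y,w)\mapsto \int_{\R^d}\min_i(\|x-y_i\|^2-w_i)f(x)\,dx$ is definable in $\mathbb{R}_{\text{an,exp}}$. Adding the polynomial term $\frac1N\sum_i w_i$ and scaling by $\frac12$ preserves definability, so $\Phi$ is definable.

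Next I would pass from $\Phi$ to $F_N$ through the dual representation \eqref{Objective_function}. For $Y\notin D_N$, Kantorovich duality together with the attainment recorded after \eqref{Objective_function} (see Remark \ref{rem_unicity}) guarantees that $\max_{w\in\R^N}\Phi(Y,w)$ is attained. Since $\R^N$ is definable and the maximum is attained, Lemma \ref{minmax}(ii) applies and yields that $Y\mapsto F_N(Y)=\max_{w\in\R^N}\Phi(Y,w)$ is definable in $\mathbb{R}_{\text{an,exp}}$ on $(\R^d)^N\setminus D_N$. The non-compactness of the argmax, which is unique only up to an additive constant, is harmless, since only the optimal value enters Lemma \ref{minmax}(ii).

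Finally I would extend definability from $(\R^d)^N\setminus D_N$ to the whole space, which is the only delicate point, since the dual formula \eqref{Objective_function} holds only off the generalized diagonal. One route is to invoke the extension of Kantorovich's identity to $D_N$ from Theorem \ref{th:KantorovichDiagonal} (cf. Remark \ref{rem:unifQuantEverywhere}), producing a representation valid everywhere to which the argument above applies verbatim. A self-contained alternative is to use that $F_N$ in \eqref{eq:unif_quantization} is continuous on all of $(\R^d)^N$ (the $W_2$ distance depends continuously on the atom locations) and that $D_N$ is a lower-dimensional semi-algebraic set; then the graph of $F_N$ is the topological closure of its restriction to $(\R^d)^N\setminus D_N$, and since closures of definable sets are definable, $F_N$ is definable on the whole space. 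I expect the checks on the integrability hypothesis of Lemma \ref{integrale_def} and the attainment of the partial maximum to be routine, while the extension across the diagonal is the main conceptual obstacle.
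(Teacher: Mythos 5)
Your proposal is correct and follows essentially the same route as the paper: definability of $\Phi$ via Lemma \ref{integrale_def}, passage to the partial maximum via Lemma \ref{minmax}, and treatment of the generalized diagonal via Theorem \ref{th:KantorovichDiagonal} as indicated in Remark \ref{rem:unifQuantEverywhere}. Your self-contained alternative for the diagonal --- using continuity of $F_N$ and the fact that the graph is the closure of its restriction to the definable dense set $(\R^d)^N \setminus D_N$, closures of definable sets being definable --- is a valid and slightly more elementary way to handle that last step, but the overall argument is the paper's.
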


\begin{proof}
	Using Lemma \ref{integrale_def}, the function $\Phi$ defined in \eqref{Objective_G} is definable in  $\mathbb{R}_{\text{an,exp}}$. From Lemma \ref{minmax}, the function $F_N$ is also definable. Strictly speaking this argument works outside of the generalized diagonal, but the equality cases can be treated similarly as described in Remark \ref{rem:unifQuantEverywhere} and Theorem \ref{th:KantorovichDiagonal}.
\end{proof}

\subsubsection{Further elements on definability of quantization losses}
\label{sec:furtherElementsDefinability}
\label{subsec:rate_Lloyd}

The following remarks illustrate the importance of hypotheses in Lemma \ref{integrale_def} and its positioning in relation to potential alternatives from the o-minimal literature as well as potential extensions.

\paragraph{Potential subanaliticity of parametric integrals:}    
Lemma \ref{integrale_def} ensures the definability of the quantization functional in $\R_{\text{an,exp}}$ based on the log-analytic nature of parameterized integrals over $\mathbb{R}_{\text{an}}$. Beyond definability, one may wonder if, potentially under specific circumstances, one could obtain a more precise description of the parameterized integral as a subanalytic function. The following example suggests that this is not the case, even for constant densities on compact semi-algebraic sets. Set
\begin{align*}
    g(x,y,z) &= 
    \begin{cases}
        1 & \text{if } 0 \leq x \leq y \leq 1 \text{   and   } 0 \leq zy  \leq z \leq x\\
        0& \text{otherwise.}
    \end{cases}
\end{align*}
The function $g$ takes value $1$ on a compact semi-algebraic set and $0$ outside. We may integrate with respect to $y$ and $z$
\begin{align*}
    \forall x,y, \qquad \int_\R g(x,y,z) dz &= 
    \begin{cases}
        \frac{x}{y} & \text{if } 0 < x \leq y \leq 1 \\
        0& \text{otherwise.}
    \end{cases}\\
\\
    \forall x, \qquad \int_\R\int_\R g(x,y,z) dzdy &=  
    \begin{cases}
        -x \ln(x) & \text{if } 0 < x  \leq 1 \\
        0& \text{otherwise.}
    \end{cases}
\end{align*}
This function is not subanalytic nor semi-algebraic. If it were, it would have a polynomial growth around $0$. This example suggests that the definability in $\mathbb{R}_{\text{an,exp}}$ is rather tight without further geometric constraint on the support.

\paragraph{Power-like desingularizing function and convergence rates:}
It is natural to study the rates of convergence for the two Lloyd sequences. A classical approach consists in analyzing the desingularizing functions of $G_N$ and $F_N$ in the KL inequality. It is well known \cite{BolteAttouchcvproximalalgoanalyticfeatures} that if the desingularizing function is a power function of the form $t\mapsto t^{1-\theta}, \theta\in [0,1[$, which corresponds to the \L{}ojasiewicz's gradient inequality, then one can deduce asymptotic convergence rates of the gradient method from the exponent $\theta$. A particularly favorable case is $\theta=\frac{1}{2}$ as it ensures local linear convergence. From  \cite{kurdyka1994wf,bolte2007lojasiewicz}, we know that semi-algebraic and subanalytic functions verify the KL inequality with a power desingularizing function. However, our arguments only ensure the definability of the objective function in $\R_{\text{an,exp}}$. The previous example suggests that in general, even for constant densities on semi-algebraic sets, representation of parameterized integral may require the global logarithm, potentially preventing power like growth of desingularizing functions.

An approach extending these ideas was proposed in \cite{bolte2017error} to obtain complexity estimates for gradient methods. This abstract analysis was further explored recently in \cite{liu2024convergence}. In the latter work, the authors have two main growth conditions: functions of regular variation, which are essentially power like functions, and logarithmic error bounds, of the form $t \mapsto (- 1/\ln(t))^\gamma$ for a positive exponent $\gamma$ (\cite[Definition 5.8]{liu2024convergence}). For example, \cite[Section 5.1]{liu2024convergence} describes convergence rates for projection methods under this logarithmic growth condition. These techniques could in principle apply to our analysis of Lloyd sequences, since the desingularizing function $\varphi$ also encodes function growth around minimizers, see \cite{bolte2017error}. However, this would require to justify that desingularizing functions fit the logarithmic error bound condition which is unclear.  Indeed, the only knowledge we have about $\varphi$ is its definability in $\mathbb{R}_{\text{an,exp}}$ (see Lemma \ref{integrale_def} sand Section \ref{sec:furtherElementsDefinability}), which contains functions whose growth is beyond logarithmic. 

Similarly to \cite[Example 5.9]{liu2024convergence} the continuous, $\mathbb{R}_{\text{an,exp}}$-definable function
\begin{equation*}f \colon t \mapsto
\begin{cases}
        \exp\left(-\frac{1}{\exp\left( -\frac{1}{\mid t \mid} \right)}\right),& t \neq 0\\
        0,& t=0
    \end{cases}
\end{equation*}
does not satisfy any logarithmic error bound as above. Indeed $f( t ) = \phi^{-1}(\mid t \mid)$ with $\phi(t)=-\frac{1}{\log\left(-\frac{1}{\log(\mid t\mid)} \right)}$ for $t>0$. The function $\phi^{-1}$ is vanishing faster than any logarithmic error bound of the form given in \cite[Definition 5.8]{liu2024convergence}. As a consequence, the estimations given in this \cite[Theorem 5.12]{liu2024convergence} cannot be adapted systematically to our setting.   

We do note however that a possibly well suited preparation Theorem similar to the one in \cite{opris2021preparation} may allow to apply such arguments for special cases, this could be the object of future research.

\paragraph{Definability in Pfaffian closure:}
A famous result of Speissegger \cite{speissegger1999pfaffian} allows to characterize definability of certain integral related to Pfaffian functions. This would in principle allow to generalize Lemma \ref{integrale_def} to arbitrary o-minimal structure. However, this result does not fit the realm of parameterized integrals which is needed to treat quantization losses.
For example consider a parametrized integral of the form $F(x,y) = \int_0^y f(x,t)dt$ where $f$ is a definable function. We have
\begin{align*}
    \frac{d}{dx} F(x,y) & = \int_0^y \frac{d}{dx} f(x,t)dt,
    &\frac{d}{dy} F(x,y) = f(x,y),
\end{align*}
for which only the second equation fits the Pfaffian conditions of \cite{speissegger1999pfaffian}. Therefore the main result of \cite{speissegger1999pfaffian} does not allow to conclude about the definability of the function $F$ above.

\paragraph{Preparation theorems:}
Lemma \ref{integrale_def} is derived from \cite[Theorem 1.3]{Cluckers2009StabilityUI} which is primarily based on the preparation theorem of globally subanalytic functions \cite{parusinski2001preparation,miller2006preparation}. This theorem is then used to establish a preparation theorem for log-analytic functions, making it particularly well-suited for integration. In \cite{opris2021preparation} a new preparation theorem is given for $\mathbb{R}_{\text{an,exp}}$ definable functions : discarding technicalities, one can write (upon some definable cell) definable functions as
\begin{equation}\label{preparation_Ranexp}
    f(x,t)=a(t)\mid y_0(x,t)\mid^{q_0}...\mid y_r(x,t)\mid^{q_r}  \exp(c(x,t))u(x,t).
\end{equation}
Although an improvement to \cite{van2002minimal}'s result in which the unit $u$ in \eqref{preparation_Ranexp} is not described, this expression won't allow to generalize our results for two reasons. First, the idea of Clucker
and Miller’s proof is to factorize log-analytic functions as a product of separable functions in $x$ and $t$ and then leverage the linearity of the integral. To this end, they use properties of the logarithm to split functions in $x$ and in $t$ from the log part of the prepared function. In the case of \eqref{preparation_Ranexp} however, the log is replaced by an exponential
which is not factorizable into two terms depending separately on $x$ and $t$. Secondly in Clucker and Miller's preparation theorem the unit $u$ is an analytic function composed with a definable vector valued function whereas here it is a power series.

\section{Beyond semi-discrete $W_2$ losses}
\label{definability losses}

We extend the definability results of the previous section to more general \emph{semi-discrete loss functions} $F:(\R^d)^N\to \R$ defined by
\begin{equation}\label{Loss_function_general_distance}
    F(Y) \quad = \quad D\left(\mu,\frac{1}{N}\sum_{i=1}^{N}\delta_{y_{i}}\right),
\end{equation}
where $D$ denotes an optimal transport divergence, and $\mu$ is a probability measure. In particular we consider optimal transports with general cost (Section \ref{extension_general_cost}), the max-sliced Wasserstein distance (Section \ref{sec:sliced_max}) and the entropy regularized optimal transport problem (Section \ref{sec:entropy}). For all these cases, we prove definability in $\R_{\text{an,exp}}$ (introduced in Proposition \ref{prop:anExp}) of the resulting semi-discrete loss in \eqref{Loss_function_general_distance}. These results illustrate the relevance of the log analytic nature of integral of globally subanalytic objects in a semi-discrete optimal transport context.

Throughout this section, the measure $\mu$ is assumed to satisfy Assumption \ref{ass:targetMeasure} and we introduce a general cost function $c$ as follows.
\begin{assumption}
	\label{ass:costFunction}
    The cost function $c\colon \R^d \times \R^d \to \R_+$ is lower semicontinuous and globally subanalytic.
\end{assumption}
\begin{rem}
	The compacity of the support of $\mu$ is not strictly required and could be replaced by integrability conditions. Yet, for the sake of simplicity, we consider $\mu$ as in Assumption \ref{ass:targetMeasure}, especially to preserve an homogeneous set of assumptions throughout the text. Additionally, the results of this section directly hold replacing the uniform weights in \eqref{Loss_function_general_distance} by any fixed weights $(\pi_{1},...\pi_{N})\in \Delta_{N}$.
	\label{rem:integrability}
\end{rem}

\subsection{Kantorovich duality and definability for semi-discrete optimal transport losses with general costs}
\label{extension_general_cost}
The following is a reformulation of Kantorovich duality for optimal transport, specified for semi-discrete losses; see e.g. \cite[Theorem 5.9]{villanioptoldnew}. We pay special attention to ties in the Dirac mass support points.
Let $\mu$ and $c$ be as in Assumptions \ref{ass:targetMeasure} and \ref{ass:costFunction} and let $Y = (y_i)_{i=1}^N \in (\R^d)^N$ be a set of $N$ points in $\R^d$. We set 

\begin{align}
	\mathcal{T}_c \left(\mu, \frac{1}{N} \sum_{i=1}^N \delta_{y_i} \right) \quad = \quad \inf_{\gamma \in \Pi} \ \int_{\R^d \times \R^d} c(x,y) d\gamma (x,y),
	\label{eq:generalCostOptTransport}
\end{align}
where the infimum is taken over couplings (or plans) $\gamma\in\Pi$ whose first marginal is $\mu$ and whose second marginal is the discrete measure $\frac{1}{N}\sum_{i=1}^N \delta_{y_i}$. The loss functions in \eqref{opt_quant_0} and \eqref{eq:unif_quantization_0} correspond to the 2-Wasserstein distance squared for which the cost function $c$ is the squared Euclidean distance, i.e. $c(x,y) = \Vert x-y\Vert^2$ for $x,y\in\R^d$.

We let $\lambda \colon (\R^d)^N \to \R^N$ be such that for all $i = 1,\ldots, N$ and $Y\in(\mathbb{R}^d)^N$,
\begin{align}
	\lambda_i(Y) = \begin{cases}
		\frac{|\{j,\, j\geq i,\, y_j = y_i\}|}{N}& \text{ if } y_k \neq y_i\  \forall k<i, \\
		0&\text{ otherwise}.
	\end{cases}
	\label{eq:KantorovichDiagonalWeights}
\end{align}

For $Y \not \in D_N$, the generalized diagonal defined in \eqref{eq:generalizedDiagonal}, we have $\lambda_i(Y) = \frac{1}{N}$, $i = 1,\ldots, N$. When there are ties with equal points, $\lambda$ assign the mass of the whole group to the first index and zero to the remaining indices. Note that $\lambda_i \geq 0$ and $\sum_{i=1}^N \lambda_i = 1$ and the function $\lambda$ is semi-algebraic (it is constant on finitely many disjoint pieces defined by linear equalities and their complements). We set $g_c\colon (\R^d)^N \times \R^N\to\R$ as follows

\begin{align}
	g_c(Y,w) = \int_{\R^d} \ \min_{i:\, \lambda_i(Y)>0} \{c(x,y_i) - w_i\} d\mu(x) + \sum_{i=1}^N \lambda_i(Y) w_i.
	\label{eq:KantorovichDiagonalGc}
\end{align}

\begin{thm}
	Let $\mu$ be a probability measure on $\R^d$, $Y = (y_i)_{i=1}^N \in (\R^d)^N$ be a set of $N$ points in $\R^d$ and $c \colon \R^d \times \R^d \to \R_+$ be lower semicontinuous. Then 
	\begin{align*}
		\mathcal{T}_c \left(\mu, \frac{1}{N} \sum_{i=1}^N \delta_{y_i} \right) \quad =	\quad \max_{w \in \R^n} g_c(Y,w),
	\end{align*}
	where $\mathcal{T}_c$ is given in \eqref{eq:generalCostOptTransport} and $g_c$ is given in \eqref{eq:KantorovichDiagonalGc}.
	\label{th:KantorovichDiagonal}
\end{thm}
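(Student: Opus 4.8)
The plan is to reduce the statement to the classical semi-discrete Kantorovich duality by first rewriting the target discrete measure on its distinct atoms, and then to justify that the resulting finite-dimensional dual supremum is attained.

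First I would observe that the weights $\lambda(Y)$ in \eqref{eq:KantorovichDiagonalWeights} are tailored to deduplicate the atoms of the discrete measure. Setting $I = \{i : \lambda_i(Y) > 0\}$, the definition selects exactly one representative index per group of coinciding points and assigns to it the total mass $\lambda_i(Y)$ of its group, so that
\[
\frac{1}{N}\sum_{i=1}^N \delta_{y_i} \;=\; \sum_{i \in I} \lambda_i(Y)\, \delta_{y_i},
\]
an identity of measures in which the points $(y_i)_{i \in I}$ are pairwise distinct and $\sum_{i \in I}\lambda_i(Y) = 1$. In particular the primal transport cost $\mathcal{T}_c$ in \eqref{eq:generalCostOptTransport} is unchanged, and we may regard the target as the measure $\nu := \sum_{i \in I}\lambda_i(Y)\delta_{y_i}$ supported on finitely many distinct points.

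Next I would apply Kantorovich duality for the lower semicontinuous nonnegative cost $c$ between $\mu$ and $\nu$ (see e.g. \cite[Theorem 5.9]{villanioptoldnew}), which yields $\mathcal{T}_c(\mu,\nu) = \sup\{\int \phi\, d\mu + \int \psi\, d\nu\}$ over admissible potentials $\phi \oplus \psi \leq c$. Since $\nu$ charges only the distinct atoms $(y_i)_{i\in I}$, the potential $\psi$ enters solely through $w_i := \psi(y_i)$, $i \in I$, and admissibility reduces to $\phi(x) \leq c(x,y_i) - w_i$ for all $i \in I$ and $\mu$-a.e. $x$. For fixed $(w_i)_{i\in I}$ the largest admissible $\phi$ is the $c$-transform $\phi(x) = \min_{i \in I}\{c(x,y_i) - w_i\}$, which is Borel since $c$ is lower semicontinuous; substituting it back gives the dual value $\int_{\R^d} \min_{i\in I}\{c(x,y_i) - w_i\}\, d\mu(x) + \sum_{i\in I}\lambda_i(Y)w_i$. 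Extending $(w_i)_{i\in I}$ to any $w \in \R^N$, this is exactly $g_c(Y,w)$ of \eqref{eq:KantorovichDiagonalGc}, because the components $w_i$ with $\lambda_i(Y) = 0$ enter neither the minimum nor the weighted sum; hence $\mathcal{T}_c(\mu,\nu) = \sup_{w \in \R^N} g_c(Y,w)$.

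To upgrade the supremum to a maximum, I would use that $w \mapsto g_c(Y,w)$ is concave (a pointwise minimum of affine maps, integrated, plus an affine term) and invariant under adding a common constant to the components indexed by $I$, since $\sum_{i\in I}\lambda_i(Y) = 1$. Modding out this one-dimensional invariance on the hyperplane $\{\sum_{i\in I}w_i = 0\}$, coercivity holds: along a nonzero direction $v$ in this hyperplane, as $t \to +\infty$ the integral term has leading behaviour $-t\max_{i\in I}v_i$ while the linear term grows like $t\sum_{i\in I}\lambda_i(Y)v_i$, and since $\lambda(Y)$ is a probability vector with full support on $I$ and $v$ is non-constant one has $\sum_{i\in I}\lambda_i(Y)v_i < \max_{i\in I}v_i$, so $g_c(Y,tv) \to -\infty$. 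A concave function that is coercive on this hyperplane attains its maximum there, which by shift-invariance equals the maximum over $\R^N$; this gives the asserted max. The main obstacle will be the careful bookkeeping of the ties --- checking that $\lambda$ genuinely realizes the distinct-atom representation and that $g_c$, which formally ranges over all $N$ indices, coincides with the semi-discrete dual attached to the distinct atoms. Once this is in place the duality is a direct citation and the attainment follows from the concavity-plus-coercivity argument above, the finiteness of all integrals being guaranteed by the standing assumptions (compact support of $\mu$ and lower semicontinuous, globally subanalytic $c$).
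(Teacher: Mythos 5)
Your proposal follows essentially the same route as the paper: deduplicate the atoms via $\lambda$, observe that $g_c$ does not depend on the components $w_i$ with $\lambda_i(Y)=0$, identify the remaining dual variables with the values of a potential on the distinct support points, and conclude by Kantorovich duality \cite[Theorem 5.9]{villanioptoldnew}. The only genuine addition is your explicit concavity-plus-coercivity argument for attainment of the maximum, which the paper leaves implicit in its citation of that theorem; this is a correct and worthwhile refinement (modulo the finiteness of $\int c(x,y_i)\,d\mu(x)$, which your coercivity bound quietly uses).
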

\begin{proof}
	This is a direct implication of \cite[Theorem 5.9]{villanioptoldnew}.
	The design of the function $\lambda$ in \eqref{eq:KantorovichDiagonalGc} ensures that 
	\begin{align*}
		\nu:=\frac{1}{N} \sum_{i=1}^N \delta_{y_i} = \sum_{i=1}^N \lambda_i(Y) \delta_{y_i},
	\end{align*}
	where, furthermore, the set of weights $\lambda_i(Y) > 0$ corresponds to distinct points $y_i$, all ties being merged. A direct inspection of the function $g_c$ in \eqref{eq:KantorovichDiagonalGc} shows that it does not depend on variables $w_i$ corresponding to zero weights $\lambda_i(Y) = 0$. Therefore we may ignore them and consider $l \leq N$ non-zero weights $\lambda_i(Y)$. Assuming that they correspond to the first $l$ weights (up to a permutation that keeps $g_c$ and $\mathcal{T}_c$ invariant), we obtain
	\begin{align*}
		\nu &= \sum_{i=1}^l \lambda_i(Y) \delta_{y_i},\\
		g_c(Y,w) &= \int_{\R^d} \ \min_{i=1,\ldots,l} \{c(x,y_i) - w_i\} d\mu(x) + \sum_{i=1}^l \lambda_i(Y) w_i,
	\end{align*}
	where the points $y_1,\ldots,y_l$ are pairwise distinct and form the support of the measure $\nu$. In this setting, we may interpret the dual variables $w_1,\ldots, w_l$ as a function $\phi \in L^1(\nu)$ by setting $\phi(y_i) = w_i$ for $i=1,\ldots,l$, since support points are pairwise distinct. The claimed result then follows from Kantorovich duality \cite[Theorem 5.9]{villanioptoldnew} by noticing that with this interpretation one has $\int \phi d\nu =  \sum_{i=1}^l \lambda_i(Y) w_i$ and $\min_{i=1,\ldots,l} \{c(x,y_i) - w_i\} = - \max_{y \in \{y_1,\ldots,y_l\}} \{\phi(y) - c(x,y)\} = -\phi^c(x)$, the $c$-transform of $\phi$.

\end{proof}

The following is our main definability result for this section. 
\begin{lem}\label{Wasserstein definable}
	Let $\mu$ be as in Assumption \ref{ass:targetMeasure} and $c$ be as in Assumption \ref{ass:costFunction}, then the loss function 
	\begin{equation}
		F_{c} : Y\in (\mathbb{R}^{d})^{N}\longmapsto \mathcal{T}_c\left(\mu,\frac{1}{N}\sum_{i=1}^{N}\delta_{y_{i}}\right)
	\end{equation}
	is definable in $\mathbb{R}_{\text{an,exp}}$ where $\mathcal{T}_c$ is defined in \eqref{eq:generalCostOptTransport}.
\end{lem}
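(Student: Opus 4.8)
The plan is to reduce the statement to the two definability tools already available: the integration Lemma~\ref{integrale_def} and the stability of definability under finite and parameterized maximization (Lemma~\ref{minmax}). The bridge between the primal transport cost $\mathcal{T}_c$ and these tools is the Kantorovich duality of Theorem~\ref{th:KantorovichDiagonal}, which gives, for every $Y \in (\R^d)^N$,
\begin{equation*}
	F_c(Y) = \max_{w \in \R^N} g_c(Y,w), \qquad g_c(Y,w) = \int_{\R^d} \min_{i:\,\lambda_i(Y)>0}\{c(x,y_i)-w_i\}\, f(x)\,dx + \sum_{i=1}^N \lambda_i(Y) w_i,
\end{equation*}
where $f$ is the density of $\mu$ and $d\mu = f\,dx$. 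It therefore suffices to show that $g_c$ is definable in $\R_{\text{an,exp}}$, and then to conclude by the parameterized maximization in Lemma~\ref{minmax}(ii), the maximum being attained by Theorem~\ref{th:KantorovichDiagonal}.

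The core step is to establish that the integrand
\begin{equation*}
	h(x,Y,w) = \min_{i:\,\lambda_i(Y)>0}\{c(x,y_i)-w_i\}\, f(x)
\end{equation*}
is globally subanalytic as a function of $(x,Y,w)$. The cost $c$ is globally subanalytic by Assumption~\ref{ass:costFunction}, the density $f$ is globally subanalytic by Assumption~\ref{ass:targetMeasure}, and the weight map $\lambda$ is semi-algebraic, hence globally subanalytic. The only delicate point is that the index set $\{i:\lambda_i(Y)>0\}$ over which the minimum is taken itself depends on $Y$. I would resolve this using the fact that $\lambda$ is piecewise constant on a finite semi-algebraic partition of $(\R^d)^N$: for each fixed subset $I \subseteq \{1,\ldots,N\}$ the set $\{Y : \{i:\lambda_i(Y)>0\} = I\}$ is semi-algebraic, and on the corresponding cylinder $h$ coincides with $\min_{i\in I}\{c(x,y_i)-w_i\}\, f(x)$, which is globally subanalytic by Lemma~\ref{minmax}(i) together with closure of globally subanalytic functions under products. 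Since $h$ agrees with a globally subanalytic function on each piece of a finite definable partition of its domain, its graph is a finite union of globally subanalytic sets, so $h$ is globally subanalytic.

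With $h$ globally subanalytic, I would invoke Lemma~\ref{integrale_def} (with parameter $(Y,w)$ ranging in the open set $(\R^d)^N \times \R^N$) to conclude that the integral term $(Y,w)\mapsto \int_{\R^d} h(x,Y,w)\,dx$ is definable in $\R_{\text{an,exp}}$. The remaining summand $\sum_i \lambda_i(Y) w_i$ is semi-algebraic, so $g_c$ is definable jointly in $(Y,w)$; applying Lemma~\ref{minmax}(ii) and Remark~\ref{rem_sup-inf} to the maximization over $w \in \R^N$ then yields definability of $F_c$ in $\R_{\text{an,exp}}$.

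The main obstacle I anticipate is verifying the hypothesis of Lemma~\ref{integrale_def} that the parameterized integral is well defined for every parameter value. Lower boundedness of the integrand is immediate, since $c \geq 0$ forces $\min_i\{c(x,y_i)-w_i\} \geq -\max_i w_i$, and the compact support of $\mu$ confines the integration to $\supp\mu$; the real work is an integrable upper bound, which is not automatic because a lower semicontinuous globally subanalytic cost can be unbounded on $\supp\mu$. I would handle this by either adding a mild integrability hypothesis (in the spirit of Remark~\ref{rem:integrability}) guaranteeing $\int c(x,y_i)\,d\mu(x) < \infty$, or by running the argument on the definable set where $F_c$ is finite and separately noting that its complement is definable, so that $F_c$ is definable as an extended-real-valued function in either case.
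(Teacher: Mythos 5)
Your proposal follows essentially the same route as the paper's proof: pass to the dual $F_c(Y)=\max_{w}g_c(Y,w)$ via Theorem~\ref{th:KantorovichDiagonal}, show the integrand of $g_c$ is globally subanalytic, apply Lemma~\ref{integrale_def} to the parameterized integral, and conclude with Lemma~\ref{minmax}. Your two additional points --- the explicit semi-algebraic partition handling the $Y$-dependence of the index set $\{i:\lambda_i(Y)>0\}$, and the observation that well-definedness of the integral is not automatic for an unbounded globally subanalytic cost on a compact support --- are legitimate refinements of details the paper's proof passes over silently, and do not change the argument.
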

\begin{proof}
	We aim at showing that the function $g_c$ appearing in the Kantorovich dual formulation in Theorem \ref{th:KantorovichDiagonal}, described in \eqref{eq:KantorovichDiagonalGc}, is definable and the result will follow from Lemma \ref{minmax}.

	First the function $\lambda$ is semi-algebraic as it is constant on finitely many subsets defined by finitely many linear equalities and their complements. Under Assumption \ref{ass:targetMeasure}, we have
	\begin{align*}
		g_c(Y,w) = \int_{\R^d} \min_{i:\, \lambda_i(Y)>0} \{c(x,y_i) - w_i\} f(x)dx + \sum_{i=1}^N \lambda_i(Y) w_i,
	\end{align*}
	where the density $f$ of $\mu$ and the cost $c$ are globally subanalytic. This property is preserved under finite minimization and composition (Lemma \ref{minmax} and Lemma \ref{lem:definableComposition}), and therefore the integrand in $g_c$ is a globally subanalytic function of $(x,Y,w)$.
	Lemma \ref{integrale_def} ensures that $g_c$ is definable in $\mathbb{R}_{\text{an,exp}}$. This concludes the proof. 
\end{proof}

\subsection{The sliced and max-sliced Wasserstein distances}
\label{sec:sliced_max}

\paragraph{Sliced Wasserstein.} For any $\theta\in\mathbb{S}^{d-1}$ the unit sphere of $\R^d$, we set $P_{\theta}:x\in\mathbb{R}^{d}\mapsto  \langle \theta,x \rangle$, the projection on the line directed by $\theta$, and we let $\sigma$ be the uniform probability measure over $\mathbb{S}^{d-1}$.
Given $\mu$ and $\nu$ two probability measures on $\mathbb{R}^{d}$, the sliced Wasserstein distance is defined as follows:  
\begin{equation}\label{sliced}
    \text{SW}_{2}^{2}(\mu,\nu)=\int_{\mathbb{S}^{d-1}}W_{2}^{2}({P_{\theta}}_{\sharp} \mu,{P_{\theta}}_{\sharp} \nu)d\sigma(\theta).
\end{equation}
Here $W_2^2$ still denotes the $2$-Wasserstein distance squared and corresponds to the loss in \eqref{eq:generalCostOptTransport} with $c(x,y) = (x-y)^2$ (the distance is taken between univariate measures).
We recall that the pushforward operator of a measure $\mu$ in $\R^d$ by a measurable map $T:\R^d\to\R^n$ is defined as the measure $T_{\sharp}\mu$ such that for all Borelian $B\subset\R^n, T_{\sharp} \mu(B)=\mu\left(T^{-1}(B)\right)$. We present definability of the sliced Wasserstein distance between two discrete probability measures. Note that even in the fully discrete setting, the divergence in \eqref{sliced} involves an integral over the sphere and therefore does not fall directly within the scope of definable functions.
\begin{lem}\label{lem_SW_discrete}
    Let $N,M\in \mathbb{N}^{*}$, then the following function is definable in $\mathbb{R}_{\text{an,exp}}$ 
\begin{align*}
	F_{\text{SW-discr}} :  (\mathbb{R}^{d})^{N}\times (\mathbb{R}^{d})^{M} \times \Delta_{N}\times \Delta_{M}  &\to \R \\
	(X,Y,a,b) &\mapsto \text{SW}_{2}^{2}\left(\sum_{i=1}^{N}a_{i}\delta_{x_{i}},\sum_{j=1}^{M}b_{j}\delta_{y_{j}}\right).
\end{align*}
\end{lem}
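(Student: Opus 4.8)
The plan is to reduce the spherical integral defining $\mathrm{SW}_2^2$ in \eqref{sliced} to an integral over a Euclidean space to which Lemma \ref{integrale_def} applies, after checking that the resulting integrand is globally subanalytic. Write $\alpha = \sum_{i=1}^N a_i \delta_{x_i}$ and $\beta = \sum_{j=1}^M b_j \delta_{y_j}$. For a fixed direction $\theta \in \mathbb{S}^{d-1}$ the pushforwards ${P_\theta}_\sharp\alpha = \sum_i a_i \delta_{\langle\theta,x_i\rangle}$ and ${P_\theta}_\sharp\beta = \sum_j b_j\delta_{\langle\theta,y_j\rangle}$ are one-dimensional discrete measures, so that the integrand
\[
g(\theta,X,Y,a,b) := W_2^2\big({P_\theta}_\sharp\alpha,\, {P_\theta}_\sharp\beta\big) = \min_{\gamma\in\Pi(a,b)} \ \sum_{i=1}^N\sum_{j=1}^M \gamma_{ij}\big(\langle\theta,x_i\rangle - \langle\theta,y_j\rangle\big)^2
\]
is the value of a finite linear program, where $\Pi(a,b)=\{\gamma\geq 0:\ \sum_j\gamma_{ij}=a_i,\ \sum_i\gamma_{ij}=b_j\}$ is the transportation polytope. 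Note that Assumption \ref{ass:targetMeasure} plays no role here, since both measures are discrete.

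First I would show that $g$ is semi-algebraic, hence globally subanalytic. The objective above is a polynomial in $(\gamma,\theta,X,Y)$, the projections $\langle\theta,x_i\rangle$ being bilinear, and the feasible set $\{(\gamma,a,b):\gamma\in\Pi(a,b)\}$ is semi-algebraic. The optimal value of such a parametric linear program is semi-algebraic in its data: its graph is cut out by a first-order formula over polynomial (in)equalities (existence of a feasible $\gamma$ attaining the value, together with a universally quantified lower-bound condition), hence definable by Tarski--Seidenberg; equivalently one may invoke Lemma \ref{minmax} together with closure of definable sets under projection. Composing with the polynomial maps $(\theta,X)\mapsto(\langle\theta,x_i\rangle)_i$ and $(\theta,Y)\mapsto(\langle\theta,y_j\rangle)_j$ and using Lemma \ref{lem:definableComposition} keeps $g$ semi-algebraic in $(\theta,X,Y,a,b)$.

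Next I would turn $\int_{\mathbb{S}^{d-1}} g\, d\sigma$ into an integral over $\mathbb{R}^{d-1}$. Parameterizing the open upper and lower hemispheres as graphs $u\mapsto (u,\pm\rho(u))$, $\rho(u)=\sqrt{1-\|u\|^2}$, over the open unit ball $B\subset\mathbb{R}^{d-1}$, the surface measure pulls back to $(1-\|u\|^2)^{-1/2}\,du$ on each chart, while the equator is $\sigma$-negligible. Hence, with $C_d=|\mathbb{S}^{d-1}|^{-1}$,
\[
\int_{\mathbb{S}^{d-1}} g(\theta,X,Y,a,b)\, d\sigma(\theta) = C_d \int_{\mathbb{R}^{d-1}} \frac{\mathbbm{1}_{B}(u)}{\sqrt{1-\|u\|^2}}\Big[ g\big((u,\rho(u)),X,Y,a,b\big) + g\big((u,-\rho(u)),X,Y,a,b\big)\Big]\, du.
\]
The chart maps $u\mapsto(u,\pm\rho(u))$, the Jacobian density $(1-\|u\|^2)^{-1/2}$ and the indicator $\mathbbm{1}_{B}$ are all semi-algebraic on $B$, so the full integrand is a globally subanalytic function of $(u,X,Y,a,b)$ by Lemma \ref{lem:definableComposition}. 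For fixed $(X,Y,a,b)$ the function $g$ is bounded on the sphere and the density is $\sigma$-integrable, so the integral is finite and well defined. Lemma \ref{integrale_def} then yields definability in $\mathbb{R}_{\text{an,exp}}$ on an open parameter set; restricting to the (semi-algebraic) domain $\Delta_N\times\Delta_M$ preserves definability and gives the claim.

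The main obstacle is the integration step: Lemma \ref{integrale_def} only covers integration over a full Euclidean space, so the heart of the argument is realizing the spherical integral through semi-algebraic charts while verifying that the induced Jacobian and integration domain remain semi-algebraic. It is precisely the fact that the round sphere and its uniform measure admit such an algebraic (square-root) description that keeps us inside the class of globally subanalytic integrands; the semi-algebraicity of the one-dimensional discrete $W_2^2$ is the other key ingredient, but is a standard parametric linear-programming fact.
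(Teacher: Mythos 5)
Your proof is correct and follows essentially the same route as the paper: establish that $(\theta,X,Y,a,b)\mapsto W_2^2\bigl({P_\theta}_\sharp\alpha,{P_\theta}_\sharp\beta\bigr)$ is semi-algebraic as the value of a parametric linear program, then conclude with Lemma \ref{integrale_def}. Your explicit reduction of the spherical integral to a Euclidean one via the semi-algebraic hemisphere charts and the Jacobian $(1-\|u\|^2)^{-1/2}$ is a welcome addition, since the paper invokes Lemma \ref{integrale_def} (stated for integrals over $\mathbb{R}^d$) without spelling out this step.
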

\begin{proof}
    By definition of the pushforward of a discrete measure by $P_\theta$, the integrand in the definition of $\text{SW}_{2}^{2}$ in \eqref{sliced} corresponds to
	\begin{align*}
		(\theta, X,Y,a,b) \mapsto W_2^2 \left(\sum_{i=1}^{N}a_{i}\delta_{\left\langle \theta,x_{i}\right\rangle},\sum_{j=1}^{M}b_{j}\delta_{\left\langle \theta, y_{j}\right\rangle}\right).
	\end{align*}
	This function is semi-algebraic, for example it can be written explicitly as a finite dimensional linear program with semi-algebraic data or using quantile functions for finitely many support points, which are semi-algebraic. The result follows from Lemma \ref{integrale_def}.
\end{proof}
The functional $F_{\text{SW-discr}}$ is studied in \cite{tanguy2023convergence} in the context of machine learning applications, where convergence of a stochastic gradient descent algorithm is described using the weakly convex nature of the resulting loss function. Unlike the case of optimal transport with general costs, the definability of semi-discrete sliced Wasserstein loss does not follow directly from \cite[Theorem 1.3]{Cluckers2009StabilityUI}. Indeed, the main result states that a certain class of log-analytic functions is stable under integration. However, log-analytic functions are not stable under maxima or absolute value, which would be required to conclude regarding the semi-discrete sliced Wasserstein loss. Still, it is reasonable to believe that the semi-discrete loss in \eqref{Loss_function_general_distance} is definable if $D$ is the sliced Wasserstein distance, but this result is out of the scope of the present work.

\paragraph{Max-sliced Wasserstein.}
Replacing the integral in \eqref{sliced} by a maximum over the unit sphere leads to the max-sliced Wasserstein problem \cite{kolouri2019generalized, deshpande2019max} defined as
\begin{equation}\label{max_sliced}
    \text{MSW}_{2}^{2}(\mu,\nu)=\underset{\theta \in \mathbb{S}^{d-1}}{\max} \ W_{2}^{2}({P_{\theta}}_{\sharp}\mu,{P_{\theta}}_{\sharp}\nu).
\end{equation}
In practice, it requires an estimator for the maximum (see the remark following the Claim 3 in \cite{deshpande2019max}). The following shows a definability result for the semi-discrete max-sliced Wasserstein distance.
\begin{lem}
	Let $\mu$ be as in Assumption \ref{ass:targetMeasure} and $c$ be as in Assumption \ref{ass:costFunction}, then the objective function 
	\begin{equation}
		F_{\text{MSW}}: Y\in (\mathbb{R}^{d})^{N}\longmapsto \text{MSW}_{2}^{2}\left(\mu,\frac{1}{N}\sum_{i=1}^{N}\delta_{y_{i}}\right)
	\end{equation}	
	is definable in $\mathbb{R}_{\text{an,exp}}$, where $\text{MSW}_{2}^{2}$ is defined in \eqref{max_sliced}.
\end{lem}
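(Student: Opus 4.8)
The plan is to reduce $F_{\text{MSW}}$ to a finite-dimensional partial maximization of a parameterized globally subanalytic integral, so that the integration result of Lemma \ref{integrale_def} and the partial-maximization stability of Lemma \ref{minmax} apply. The only genuine difficulty is that the sliced measure $(P_\theta)_\sharp \mu$ is a univariate measure whose density need not be globally subanalytic: projecting $f$ onto the line directed by $\theta$ amounts to integrating $f$ over the affine hyperplanes $\{x:\langle\theta,x\rangle=s\}$, so one cannot simply feed a one-dimensional density into the earlier results. I would circumvent this entirely by passing to the Kantorovich dual and keeping the \emph{original} measure $\mu$ in the integral.

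First, I would fix $\theta$ and apply Theorem \ref{th:KantorovichDiagonal} in dimension one, to the probability measure $(P_\theta)_\sharp\mu$ on $\mathbb{R}$, the support points $(\langle\theta,y_i\rangle)_{i=1}^N\in\mathbb{R}$, and the lower semicontinuous cost $c(s,t)=(s-t)^2$. This gives
\begin{equation*}
	W_2^2\!\left((P_\theta)_\sharp \mu,\, \frac1N\sum_{i=1}^N \delta_{\langle\theta,y_i\rangle}\right) = \max_{w \in \mathbb{R}^N} \left\{\int_{\mathbb{R}} \min_{i:\,\lambda_i>0}\big\{(s - \langle\theta,y_i\rangle)^2 - w_i\big\}\, d\big((P_\theta)_\sharp\mu\big)(s) + \sum_{i=1}^N \lambda_i w_i\right\},
\end{equation*}
where the weights $\lambda_i=\lambda_i\big((\langle\theta,y_j\rangle)_j\big)$ from \eqref{eq:KantorovichDiagonalWeights} merge the ties among the projected points. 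These ties are governed by the conditions $\langle\theta,y_i-y_j\rangle=0$, which are polynomial (linear in $\theta$) in $(\theta,Y)$, so $(\theta,Y)\mapsto\lambda_i$ is semi-algebraic.

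Next, I would push the integral back onto $\mu$ via the defining change of variables $\int_{\mathbb{R}}h\,d\big((P_\theta)_\sharp\mu\big)=\int_{\mathbb{R}^d}h(\langle\theta,x\rangle)\,f(x)\,dx$, turning the inner quantity into
\begin{equation*}
	G(Y,w,\theta) \;:=\; \int_{\mathbb{R}^d} \min_{i:\,\lambda_i>0}\big\{\langle\theta,\, x - y_i\rangle^2 - w_i\big\}\, f(x)\, dx \;+\; \sum_{i=1}^N \lambda_i w_i ,
\end{equation*}
which I regard as defined on the open set $(\mathbb{R}^d)^N\times\mathbb{R}^N\times\mathbb{R}^d$, extending $\theta$ to all of $\mathbb{R}^d$ to restore the openness required by Lemma \ref{integrale_def}. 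The integrand is a composition of the polynomial $\langle\theta,x-y_i\rangle^2-w_i$, a finite minimum, and the globally subanalytic density $f$, hence globally subanalytic jointly in $(x,Y,w,\theta)$ by Lemma \ref{minmax} and Lemma \ref{lem:definableComposition}; it is bounded on the compact support of $\mu$, so the integral is finite for every parameter value. Lemma \ref{integrale_def} then gives that $G$ is definable in $\mathbb{R}_{\text{an,exp}}$.

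Finally, I would assemble the two maximizations. Restricting to the definable set $\mathbb{S}^{d-1}\times(\mathbb{R}^d)^N$, where the inner maximum over $w$ is attained and finite by Theorem \ref{th:KantorovichDiagonal}, Lemma \ref{minmax}(ii) yields a definable function $H(\theta,Y)=\max_{w}G(Y,w,\theta)$ that coincides with the integrand $W_2^2\big((P_\theta)_\sharp\mu,(P_\theta)_\sharp\frac1N\sum_i\delta_{y_i}\big)$ of \eqref{max_sliced}. Since $\mathbb{S}^{d-1}$ is a compact semi-algebraic (hence definable) subset of $\mathbb{R}^d$ and the maximum over it is attained by continuity, a second application of Lemma \ref{minmax}(ii) shows that $F_{\text{MSW}}(Y)=\max_{\theta\in\mathbb{S}^{d-1}}H(\theta,Y)$ is definable in $\mathbb{R}_{\text{an,exp}}$, as claimed. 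The main obstacle, as flagged above, is the lack of a usable globally subanalytic density for $(P_\theta)_\sharp\mu$; the dual reformulation together with the pushforward change of variables is exactly what keeps every step inside the class of globally subanalytic integrands against the fixed measure $\mu$, to which Lemma \ref{integrale_def} applies.
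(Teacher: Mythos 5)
Your proposal is correct and follows essentially the same route as the paper: Kantorovich duality via Theorem \ref{th:KantorovichDiagonal} applied to the projected measures with the tie-breaking weights $\lambda$, the change of variables pushing the integral back onto $\mu$, then Lemma \ref{integrale_def} for the parameterized integral and Lemma \ref{minmax} for the maximization over $(w,\theta)$ (which the paper takes jointly rather than sequentially, an immaterial difference). Your extra care about the semi-algebraicity of $(\theta,Y)\mapsto\lambda$ and the open-domain requirement of Lemma \ref{integrale_def} only makes explicit what the paper leaves implicit.
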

\begin{proof}
	For any $\theta\in \mathbb{S}^{d-1}$, we have that the pushforward of the discrete measure by $P_\theta$ is given by $\frac{1}{N}\sum_{i=1}^{N}\delta_{\langle y_{i},\theta \rangle}$. 
	Using Kantorovich duality (Theorem 5.9 in \cite{villanioptoldnew}), following the same construction as in Section \ref{extension_general_cost} we have, denoting by $Y_\theta$ the projection of points in $Y$ on the direction $\theta$
	\begin{align*}
		\text{MSW}_{2}^{2}\left(\mu,\frac{1}{N}\sum_{i=1}^{N}\delta_{y_{i}}\right)&=\underset{\theta\in \mathbb{S}^{d-1},\, w\in \mathbb{R}^{N}}{\max}\int_{\mathbb{R}}\underset{\lambda_i(Y_\theta) > 0}{\min}\left\{ (t-\langle y_{i},\theta \rangle)^{2}-w_{i} \right\}d{P_{\theta}}_{\sharp}\mu(t)+\sum_{i=1}^{N}\lambda_i(Y_\theta) w_{i}\\
	    &=\underset{\theta\in \mathbb{S}^{d-1},\, w\in \mathbb{R}^{N}}{\max}\int_{\mathbb{R}^{d}}\underset{\lambda_i(Y_\theta)> 0}{\min}\left\{ \langle x-y_{i},\theta \rangle^{2}-w_{i} \right\}f(x)dx + \sum_{i=1}^{N} \lambda_i(Y_\theta) w_{i},
	\end{align*}
	where the first equality follows from Theorem \ref{th:KantorovichDiagonal} and $\lambda$ is given as in \eqref{eq:KantorovichDiagonalWeights} for $d = 1$ applied to the projections $\left\langle \theta,y_i \right\rangle$, $i = 1,\ldots, N$.
	The second equality corresponds the change of variable for image measures (for example \cite[Theorem 3.6.1]{MeasuretheoryBogachev}). The integrand is globally subanalytic so by Lemma \ref{integrale_def}, the integral is definable in $\mathbb{R}_{\text{an,exp}}$, and so is the maximum by Lemma \ref{minmax}.

\end{proof}

\subsection{Entropic regularization}
\label{sec:entropy}

Entropic regularization is widely used in optimal transport and can for example induce desirable computational and algorithmic features \cite{cuturi2013sinkhorn} (see also section 4.1 of \cite{peyre2019computational}).
This regularization was considered in a semi-discrete setting in \cite[Section 2]{Entropic}.
Given $\epsilon>0$ and a general cost function $c \colon \R^d \times\R^d \to \R_+$, the entropy regularized optimal transport loss $W_{\epsilon}$ between two probability measures $\mu$ and $\nu $ is defined as follows 
\begin{equation}\label{entropic_wass}
    W_{\epsilon}(\mu,\nu)=\underset{\gamma \in \Pi}{\min}\int_{\R^d\times \R^d}c(x,y)d\gamma(x,y)+\epsilon\: \text{H}(\gamma\vert\mu \otimes \nu)
\end{equation}
where $\Pi$ denotes the set of measures on $\R^d \times \R^d$ whose first and second marginals are $\mu$ and $\nu$, and $\mu \otimes \nu$ is the product measure. The function H denotes the Kullback-Leibler divergence, which is defined for probability measures $\gamma, \zeta$ on $\R^d \times \R^d$ by: 
\begin{equation*}
    \text{H}(\gamma\vert\zeta)=\int_{\R^d\times \R^d}\left[\log\left(\frac{d\gamma}{d\zeta}(x,y)\right)-1 \right]d\gamma(x,y)
\end{equation*}
and $\frac{d\gamma}{d\zeta}$ denotes the relative density of $\gamma$ with respect to $\zeta$ (the divergence is infinite if $\gamma$ is not absolutely continuous with respect to $\zeta$). 

\begin{lem}
	\label{lem:entropic}
	Let $\mu$ be as in Assumption \ref{ass:targetMeasure} and $c$ be as in Assumption \ref{ass:costFunction}. Suppose that $R>0$ is such that $\supp \mu \subset B_{d}(0,R)$, the unit ball of radius $R$, and $c$ is in addition analytic on an open set containing $B_{d}(0,R)\times B_{d}(0,R)$. 
	Then given a fixed $\epsilon > 0$, the entropy regularized transport functional (restricted to the $R$-radius ball)
	\begin{equation}
	F_{\epsilon}: Y\in B_{d}(0,R)^{N}\longmapsto W_{\epsilon}\left(\mu,\frac{1}{N}\sum_{i=1}^{N}\delta_{y_{i}}\right)
	\end{equation}
 is definable in $\mathbb{R}_{\text{an,exp}}$.
\end{lem}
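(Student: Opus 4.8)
The plan is to reduce $F_\epsilon$ to a finite‑dimensional maximization of a parameterized integral and then confine every variable to a compact set so that the integrand becomes globally subanalytic and Lemma \ref{integrale_def} applies. First I would invoke the semi‑dual (Sinkhorn) formulation of semi‑discrete entropic optimal transport, as in \cite[Section 2]{Entropic}: eliminating the continuous potential $u\in L^1(\mu)$ through its closed‑form optimality condition (the entropic $c$‑transform, a log‑sum‑exp) leaves a concave maximization over the finite‑dimensional potential $v\in\R^N$,
\[
F_\epsilon(Y)=\max_{v\in\R^N}\left\{\frac1N\sum_{i=1}^N v_i-\epsilon\int_{\R^d}\log\!\left(\frac1N\sum_{i=1}^N e^{(v_i-c(x,y_i))/\epsilon}\right)f(x)\,dx\right\}+\kappa,
\]
where $f$ is the density of $\mu$ and $\kappa=\kappa(\epsilon,N)$ is an additive constant, irrelevant for definability. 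Unlike the unregularized case treated in Theorem \ref{th:KantorovichDiagonal}, ties among the $y_i$ require no special bookkeeping here, since the smooth log‑sum‑exp handles repeated support points directly.

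The main obstacle is that the integrand involves $\exp$ and $\log$, which belong to $\mathbb{R}_{\text{an,exp}}$ (Proposition \ref{prop:anExp}) but are \emph{not} globally subanalytic on all of $\R$, and $\mathbb{R}_{\text{an,exp}}$ is not stable under integration, so Lemma \ref{integrale_def} cannot be applied as it stands. To circumvent this I would confine all variables to compact sets. The variable $x$ is already confined, as $f$ is supported on $\supp\mu\subset B_d(0,R)$. For $v$, I would exploit the shift invariance $v\mapsto v+t\mathbf{1}$ of the objective (valid because the weights sum to one) to normalize $\sum_i v_i=0$, and then use the a priori bound on entropic potentials: setting $C:=\sup_{(x,y)\in\overline{B_d(0,R)}\times\overline{B_d(0,R)}}c(x,y)<\infty$ (finite since $c$ is continuous on a neighborhood of this compact product), the first‑order marginal equations at optimality force the oscillation $\max_i v_i-\min_i v_i$ to be bounded by a constant $M$ depending only on $C$, $\epsilon$ and $N$, hence \emph{uniformly} over $Y\in B_d(0,R)^N$. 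Consequently the maximization may be restricted to the compact semi‑algebraic (hence definable) set $K:=\{v\in\R^N:\sum_i v_i=0,\ \|v\|_\infty\le M\}$ without changing the value of the maximum.

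It then remains to verify global subanaliticity of the integrand on the compact domain $\supp\mu\times\overline{B_d(0,R)}^N\times K$. There, each argument $(v_i-c(x,y_i))/\epsilon$ ranges over a bounded interval $[-A,A]$, and the inner sum $\frac1N\sum_i e^{(v_i-c(x,y_i))/\epsilon}$ is bounded below by a strictly positive constant and above by a finite one. Since $c$ restricted to $\overline{B_d(0,R)}\times\overline{B_d(0,R)}$ is the restriction of an analytic function and thus globally subanalytic (Lemma \ref{prop_exmp_GSA}), and since $\exp$ restricted to $[-A,A]$ and $\log$ restricted to a compact subinterval of $(0,\infty)$ are restricted analytic and therefore globally subanalytic, the closure of global subanaliticity under finite sums and composition (Lemma \ref{lem:definableComposition}) shows that the integrand $(x,Y,v)\mapsto f(x)\log(\frac1N\sum_i e^{(v_i-c(x,y_i))/\epsilon})$, extended by zero in $x$ outside $\supp\mu$ (legitimate since $f$ vanishes there), is globally subanalytic on $\R^d\times B_d(0,R)^N\times K$.

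Finally I would apply Lemma \ref{integrale_def} to conclude that the partial integral $(Y,v)\mapsto\int_{\R^d}[\cdots]\,dx$ is definable in $\mathbb{R}_{\text{an,exp}}$; adding the affine‑in‑$v$ semi‑algebraic term and the constant $\kappa$, and maximizing over the definable compact set $K$ (the maximum being attained by continuity and compactness, so Lemma \ref{minmax}(ii) applies), yields definability of $F_\epsilon$ in $\mathbb{R}_{\text{an,exp}}$. The hard part is the uniform a priori bound on the optimal potentials: it is exactly what legitimately replaces the unbounded $\max_{v\in\R^N}$ by a maximum over a fixed compact set, thereby turning the otherwise non‑globally‑subanalytic $\exp$/$\log$ integrand into a restricted‑analytic one that falls back within the scope of the Cluckers–Comte–Loeser integration theorem underlying Lemma \ref{integrale_def}.
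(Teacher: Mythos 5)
Your proposal is correct and follows essentially the same route as the paper's proof: pass to the finite-dimensional dual (log-sum-exp) formulation, use a uniform a priori bound to confine the potentials to a compact set so that $\exp$ and $\log$ become restricted analytic, then apply Lemma \ref{integrale_def} and Lemma \ref{minmax}. The only (harmless) deviations are that you derive the potential bound from the marginal optimality conditions rather than citing the Lipschitz estimate for entropic potentials, and you dispense with the tie-breaking function $\lambda$ of \eqref{eq:KantorovichDiagonalWeights} by noting that the concave, permutation-symmetric log-sum-exp objective attains its maximum on the symmetric subspace when support points coincide.
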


\begin{proof}
Using \cite[Proposition 2.1]{Entropic}, we have that $W_{\epsilon}$ can be written in dual form

\begin{align}
	W_{\epsilon}\left(\mu, \frac{1}{N} \sum_{i=1}^N \delta_{y_i}\right)&=\underset{w\in \mathcal{C}(B_{d}(0,R))}{\max}\quad\int_{\R^{d}} -\epsilon\log\left(\frac{1}{N} \sum_{i=1}^N \exp\left(\frac{w(y_i)-c(x,y_i)}{\epsilon}\right)\right)d\mu(x) \nonumber\\
	&\qquad + \frac{1}{N} \sum_{i=1}^N w(y_i) -\epsilon \label{S_eps}\tag{$S_{\epsilon}$} 
\end{align}
	where the soft-max approximation of the $c$-transform of $w$ appears and the maximum is over continuous functions. We first remark that the addition of a constant to $w$ in \eqref{S_eps} does not change the value of the objective, so we may assume for example that $w(y_1) = 0$, without modifying the value of the maximum. Furthermore, it is known that since $c$ is $\mathcal{C}^\infty$ by assumption, the optimal potential $w$ in \eqref{S_eps} is Lipschitz continuous with the same Lipschitz constant as the cost $c$, say on $B_{d}(0,R)$ \cite[Proposition 1]{genevay2019sample}. More precisely, it can be deduced from the smooth $c$-transform formulation of the dual variable and a first order condition. Hence the potential $w$ in \eqref{S_eps} may be assumed to be uniformly bounded by a constant $K$ which only depends on $c$ and $R$. Now we may use the same device as in Theorem \ref{th:KantorovichDiagonal} and obtain

\begin{align}
	W_{\epsilon}\left(\mu, \frac{1}{N} \sum_{i=1}^N \delta_{y_i}\right)&=\underset{\|w\|_\infty \leq K, w_1 = 0}{\max}\quad\int_{\R^{d}} -\epsilon\log\left(\sum_{i=1}^N \lambda_i(Y) \exp\left(\frac{w_i-c(x,y_i)}{\epsilon}\right)\right)d\mu(x) \nonumber\\
	&\qquad +  \sum_{i=1}^N \lambda_i(Y)w_i -\epsilon \label{eq:dualEntropicRegularized} 
\end{align}
where $\lambda$ is the semi-algebraic function in \eqref{eq:KantorovichDiagonalWeights} which allows to break ties.
The representation in \eqref{eq:dualEntropicRegularized} involves the logarithm restricted to a compact interval of the form $[a,b]$ where $a>0$, and the exponential function also restricted to a compact interval. Both are restricted analytic and hence globally subanalytic as in Proposition \ref{prop:globalSubanalytic}. Definable functions are stable by composition as described in Lemma \ref{lem:definableComposition}, therefore the expression in \eqref{eq:dualEntropicRegularized} involves the integral of a globally subanalytic function, and by Lemma \ref{lem:definableComposition} it is definable in $\R_{\text{an,exp}}$. We conclude using the stability of definable functions under maxima, Lemma \ref{minmax}.
\end{proof}

\begin{rem}
	\label{rem:epsilon}
	The result in Lemma \ref{lem:entropic} also holds if $\epsilon$ is considered as a variable, restricted to a compact interval of $\R_+^*$.
\end{rem}

\section{Conclusion}

The rigidity of o-minimal structures, and the log-analytic nature of globally subanalytic integrals allowed to prove sequential convergence of Lloyd-type algorithms under definability assumptions of the density of the target measure $\mu$. A by-product of the analysis is the introduction of the powerful tools of o-minimal geometry in a semi-discrete optimal transport context. The main assumption on the target measure $\mu$ is that it stems from a globally subanalytic density. While the class of such densities is extremely broad, it is not clear how restrictive this assumption is and how it relates to practice. For example, it seems difficult to practically distinguish between absolutely continuous measures for which the global subanaliticity assumption on the density holds or not. Finally, exploring the nature of critical points resulting from Lloyd's iterations is a relevant next step (global/local minima, saddle point) which would require a second order analysis.

\section*{Acknowledgments}

The idea of using KL inequality for integral functionals based on definability was proposed by to Jérôme Bolte along numerous discussions on the geometrical approach to optimization.
The authors thank the anonymous reviewers for insightful comments which greatly improved our manuscript.
This work benefited from financial support from the French government managed by the National Agency for Research under the France 2030 program, with the reference "ANR-23-PEIA-0004".
Edouard Pauwels acknowledges the support of Institut Universitaire de France (IUF), the AI Interdisciplinary Institute ANITI funding, through the French ``Investments for the Future -- PIA3'' program under the grant agreement ANR-19-PI3A0004, Air Force Office of Scientific Research, Air Force Material Command, USAF, under grant numbers FA8655-22-1-7012, ANR Chess (ANR-17-EURE-0010), ANR Regulia and ANR Bonsai. 

\appendix

\section{Additional proofs and technical results}
\label{sec:proofLemma}
The following lemma will be used throughout the text, see for example \cite[Exercise 1.11]{coste2000introduction}.
\begin{lem}
	The composition of two functions definable in the same o-minimal structure is definable.
	\label{lem:definableComposition}
\end{lem}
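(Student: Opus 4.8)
The plan is to reduce the claim directly to the structural axioms, using the fact (recalled in the definition of definability) that a map is definable exactly when its graph is a definable set. So let $f\colon \R^n \to \R^m$ and $g\colon \R^m \to \R^p$ be definable in a common o-minimal structure $\mathcal{S}$, with graphs $\Gamma_f \subseteq \R^n \times \R^m$ and $\Gamma_g \subseteq \R^m \times \R^p$ definable. I want to show that the graph of the composition,
\[
\Gamma_{g \circ f} = \left\{ (x,z) \in \R^n \times \R^p : z = g(f(x)) \right\},
\]
is definable, since this is by definition what it means for $g \circ f$ to be definable.

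The key idea is to realize $\Gamma_{g \circ f}$ as the projection of a single definable set sitting in the product space $\R^n \times \R^m \times \R^p$, the intermediate variable playing the role of an existential quantifier. First I would observe that, by stability under Cartesian product, both $\Gamma_f \times \R^p$ and $\R^n \times \Gamma_g$ are definable subsets of $\R^n \times \R^m \times \R^p$; with the natural ordering of coordinates $(x,y,z)$ neither product requires any reindexing, since $\Gamma_f$ constrains $(x,y)$ and $\Gamma_g$ constrains $(y,z)$. Next, because $\mathcal{S}_{n+m+p}$ is a Boolean sub-algebra, their intersection
\[
A = \left\{ (x,y,z) \in \R^n \times \R^m \times \R^p : (x,y) \in \Gamma_f \text{ and } (y,z) \in \Gamma_g \right\}
\]
is definable as well. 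By construction $(x,y,z) \in A$ if and only if $y = f(x)$ and $z = g(y) = g(f(x))$, so the projection of $A$ that forgets the middle block $y$ and keeps $(x,z)$ is precisely $\Gamma_{g \circ f}$. This is a projection onto a lower-dimensional coordinate subspace, hence admissible, and the conclusion follows.

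I do not expect a genuine obstacle here: the entire content is that the three closure properties, namely Cartesian products, Boolean operations, and projection onto coordinate subspaces, combine to eliminate the intermediate variable $y$. The only point worth a moment's attention is the bookkeeping of ambient coordinates, that is, ensuring the two products land in the same space $\R^n \times \R^m \times \R^p$ and that the final projection drops exactly the $m$ coordinates carrying $y$. Since the paper's axioms already grant projection onto an arbitrary coordinate subspace, even this requires no separate permutation argument, and the same reasoning extends verbatim to compositions of definable maps of arbitrary matching arities.
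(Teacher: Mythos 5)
Your proof is correct and is exactly the standard argument that the paper's citation (Coste, Exercise~1.11) points to; the paper itself gives no proof of this lemma, only that reference. Realizing $\Gamma_{g\circ f}$ as the projection of $(\Gamma_f\times\R^p)\cap(\R^n\times\Gamma_g)$ is the canonical route, and the only subtlety is the one you flag: if the projection axiom is read in its narrower form (dropping the last coordinate only), one additionally invokes that coordinate permutations preserve definability, which holds because their graphs are algebraic.
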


\begin{proof}[Proof of Lemma \ref{prop_exmp_GSA}]
	Let $\overline{B_{1,\infty}(\bar{x}_{1},r)},\:\ldots\:,\overline{B_{l,\infty}(\bar{x}_{l},r)}$ be a finite covering of $K$ by $l$ closed balls for the infinite norm with $r$ sufficiently small so that for all $i = 1,\ldots, l$, we have  $\overline{B_{i,\infty}(\bar{x}_{i},r)} \subset U$. Fix $i\in \{1, \ldots, l\}$ and consider the translation and scaling transformation 
\begin{equation*}
A_{i}: x \in\R^{n} \longmapsto \frac{1}{r}(x-\bar{x}_{i}) \in \mathbb{R}^{n}
\end{equation*}
so that $ A_{i}\left(\overline{B_{i,\infty}(\bar{x}_{i},r)}\right)=[-1,1]^{n}$.  These are affine bijections, so that $A_{i}(U)$ is an open set containing $[-1,1]^{n}$. By composition of analytic functions, we get that $f\circ A_{i}^{-1}:A_{i}(U)\longrightarrow \mathbb{R}$ is analytic. In particular, since $[-1,1]^{n}\subset A_{i}(U)$ we have that the restriction $f\circ A_{i}^{-1}|_{[-1,1]^{n}}$ is restricted analytic, hence globally subanalytic.

	Finally, by denoting $B_{i}:=\overline{B_{i,\infty}(\bar{x}_{i},r)}$, we have that $f|_{B_{i}}=f\circ A_{i}^{-1}|_{[-1,1]^{n}}\circ A_{i}$ where $A_{i}$ is affine, hence definable (its graph is a subspace). Therefore $f|_{B_{i}}$ is also globally subanalytic by Lemma \ref{lem:definableComposition}.
The graph of $f|_{K}$ can then be written: 
\begin{align*}
    \Gamma\left(f|_{K}\right)&=\Gamma\left(f|_{\bigcup_{i=1}^{l}B_{i}\cap K}\right) =\bigcup_{i=1}^{l} \Gamma\left(f|_{B_{i}\cap K}\right).
\end{align*}
	For all $i\in \{1,\ldots,N\}$, we therefore have $f|_{B_{i}\cap K}=f|_{B_{i}}\ \mathbbm{1}_{K}$, which is the product of two globally subanalytic functions as $\mathbbm{1}_{K}$ is the indicator function of a semi algebraic set.  Therefore $f|_{B_{i}\cap K}$ is globally subanalytic, for example using Lemma \ref{lem:definableComposition} for stability under product, and we conclude using the stability of globally subanalytic sets with finite unions.

\end{proof}

\begin{proof}[Proof of Lemma \ref{minmax}]

We prove the results (i) and (ii) for the minimum. The maximum case follows a symmetric proof using the hypograph.

First, we recall that by definition of the epigraph, $\epi \left( \underset{j\in J }{\min} \:f_{j}\right)=\bigcup_{j\in J}\epi(f_{j})$. We thus conclude for (i) since definable subsets are stable by finite unions.

For the second property (ii), we denote by $p:\mathbb{R}^{d+n+1}\longrightarrow \mathbb{R}^{d+1}$ the canonical projection on the first $d$ coordinates and the last coordinate. We aim at showing the following equality
\begin{equation*}
\epi\left(\underset{w\in E}{\min}\:f(.,w)\right)=p\left(\epi(f)\right).
\end{equation*}
Since the minimum is attained by hypothesis, we have
\begin{align*}
(x,y)\in \epi\left(\underset{w\in E}{\min}\: f(.\:,w)\right)
\quad\iff \quad & \underset{w\in E}{\min}\:f(x,w)\leq y\\
\iff \quad & \exists \ w'\in E\ \text{such that}\ f(x,w')\leq y\\
\iff \quad & (x,w',y)\in \epi(f)\\
\iff \quad & (x,y)\in \ p(\epi(f))\\
\end{align*}
which concludes the proof.

\end{proof}

\bibliographystyle{plain}
\bibliography{ref}

\end{document}